\theoremstyle{plain}
\newtheorem{theorem}{Theorem}
\newtheorem*{theorem*}{Theorem}
\newtheorem{proposition}{Proposition}
\newtheorem*{proposition*}{Proposition}
\newtheorem{lemma}{Lemma}
\theoremstyle{definition}
\newtheorem{definition}{Definition}
\newtheorem{notation}{Notation}
\newtheorem{remark}{Remark}
\newtheorem{claim}{Claim}
\DeclareMathOperator{\red}{red}
\DeclareMathOperator{\Res}{Res}
\DeclareMathOperator{\Tr}{Tr}
\DeclareMathOperator{\reg}{reg}
\newcommand{\enm}[1]{\ensuremath{#1}} 
\newcommand{\cal}[1]{\mathcal{#1}}
\newcommand{\NN}{\enm{\mathbb{N}}}
\newcommand{\PP}{\enm{\mathbb{P}}}
\newcommand{\Ii}{\enm{\cal{I}}}
\newcommand{\Ll}{\enm{\cal{L}}}
\newcommand{\Oo}{\enm{\cal{O}}}
\newcommand{\Rr}{\enm{\cal{R}}}
\renewcommand{\phi}{\varphi}
\renewcommand{\theta}{\vartheta}
\renewcommand{\epsilon}{\varepsilon}
\title{Tensoring by a plane maintains secant-regularity in degree at least two
}
\author{E. Ballico, A. Bernardi, T. Ma\'ndziuk}
\address[E. Ballico, A. Bernardi]{Universit\`a di Trento, Via Sommarive, 14 - 38123 Povo (Trento), Italy.}
\address[T. Ma\'ndziuk]{Universit\`a di Trento, Via Sommarive, 14 - 38123 Povo (Trento), Italy, and University of Warsaw, Banacha 2, 02-097 Warsaw, Poland.}
\email{edoardo.ballico@unitn.it,alessandra.bernardi@unitn.it,  tomasz.mandziuk@unitn.it}
\begin{document}

\maketitle

\emph{To our friend and colleague Gianfranco Casnati.}

\begin{abstract}
Starting from an integral projective variety $Y$ equipped with a very ample, non-special and not-secant defective line bundle $\Ll$, the paper establishes, under certain conditions, the regularity of $(Y \times \mathbb P^2,\Ll[t])$ for $t\geq 2$. The mildness of those conditions allow to classify all secant defective cases of  any product of $(\mathbb P^1)^{ j}\times (\mathbb P^2)^{k}$, $j,k \geq 0$, embedded in multidegree at least $(2, \ldots , 2)$ and 
$(\PP^m\times\PP^n\times (\PP^2)^k, \Oo_{\PP^m\times\PP^n\times (\PP^2)^k} (d,e,t_1, \ldots, t_k))$ where $d,e \geq 3$, $t_i\geq 2$, for any $n$ and $m$. 
\end{abstract}

\section*{Introduction}

The problem of classifying the dimensions of secant varieties to algebraic varieties is a classical problem that dates back to the XIX Century with the discovery of the Veronese surface in $\mathbb P^5$  being the only surface in that space whose secant variety of lines was expected to fill the ambient space but it is actually a hypersurface (cf. \cite{Pa}). Then in the 1990s the contributions of F. Zak (\cite{Z}) and J. Alexander and A. Hirschowitz (\cite{ah, ah1, ah2}) made a turning point. In particular the last ones gave the complete classification of all defective secant varieties of Veronese varieties. After that work many other studies have been done with a specific focus on varieties parameterizing tensors (we refer to some of them \cite{CGG2002,cgg1,AOP09,BaurDraisma,bcc,AH11,bbc,cgg,lmr,AMR19,CGG02,BALLICO05,BC06,bcgi07,bcgi, AB11,TV21,CGG05,AOP12,Boralevi201367,BBCC13,BV18,BDHM17}). Nevertheless the unique complete classifications of defectiveness of secant varieties were up to now very few. A. Laface and E. Postinghel classified the products of $\mathbb P^1$'s (cf. \cite{lp}), H. Abo and N. Vannieuwenhoven gave the classification of the tangential varieties of Veronese varieties (cf. \cite{AV}), M.V. Catalisano and A. Oneto covered the tangential varieties of Segre-Veronese varieties (cf. \cite{CO20}) and   F. Galuppi together with A. Oneto classified the Segre-Veronese of two factors embedded in bi-degree at least $(3,3)$ (cf. \cite{go}). See \cite{Hitchhiker} for a comprehensive description of the history of the problem.

Despite its long history and the significant efforts made to classify as many cases as possible, the state of the art is still far from having a complete description of it.

The defectiveness of a secant variety depends on the celebrated Terracini's Lemma (cf. \cite{Terracini}, \cite[Cor. 1.11]{a}), which identifies the relationship between the dimension of the tangent space to the $z$-th secant variety of a variety $X$ and the tangent spaces to $X$ at $z$ general points. A dual form of this lemma is often used, which allows one to transform the geometric problem into an algebraic problem of computing the $h^0$ of the ideal sheaf of $z$ double points twisted with the line bundle that induces the immersion of $X$ (see \Cref{terracini}). This is the technique we also employ in our current work, which enables us to study a broader problem than just focusing on the defectiveness of a single type of variety.

\smallskip

In this work, we tensor a projective and integral variety $Y$ embedded with a not secant defective and non special line bundle $\mathcal{L}$ by a $\mathbb P^2$. We provide sufficient conditions on the dimension of $Y$ and $h^0(Y,\mathcal{L})$ to guarantee the regularity of $(X = Y \times \mathbb{P}^2, \mathcal{L}[t])$ for $t\geq 2$.

Here is the complete statement of our main theorem: 

\begin{theorem*}[Theorem \ref{a5}] Let $Y$ be an integral projective variety of dimension $r$ and $\Ll$ a very ample and not secant defective line bundle on $Y$ such that $h^1(Y,\Ll)=0$.  We denote by  $\alpha:= h^0(Y,\Ll)$ and $X_2:= Y\times \PP^2$.
For the following values of $r$ and $\alpha$
\begin{itemize}
  \item $r=2,3$ and $\alpha \geq 60$,  
        \item $r=4$ and $\alpha \geq 98$, 
        \item $r=5$ and $\alpha \geq 133$, 
        \item $r=6$ and $\alpha \geq 176$, 
        \item $r=7$ and $\alpha \geq 231$, 
        \item $r\geq 8$ and $\alpha \geq \frac{1}{81} (27r^3+144r^2+210r+79)$,
    \end{itemize}
we have that if $t\ge 2$,  then $(X_2,\Ll[t])$ is not secant defective.
\end{theorem*}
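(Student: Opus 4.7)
The plan is to reformulate the claim as a postulation question about double points via Terracini's lemma and then induct on $t$ using the Horace method. In the dual form (see \Cref{terracini}), $(X_2,\Ll[t])$ is not secant defective iff for each $s\ge 1$ and $Z\subset X_2$ a set of $s$ general points,
\[
h^0\!\bigl(X_2,\Ii_{2Z}\otimes \Ll[t]\bigr)=\max\{0,\ h^0(X_2,\Ll[t])-s(r+3)\}.
\]
Since $h^1(Y,\Ll)=0$ and $\Oo_{\PP^2}(t)$ is non-special for $t\ge 0$, the K\"unneth formula gives $h^0(X_2,\Ll[t])=\alpha\binom{t+2}{2}$. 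I would proceed by induction on $t\ge 2$, feeding in the hypothesis on $(Y,\Ll)$ together with a companion regularity statement for $(Y\times \PP^1,\Ll\boxtimes \Oo(t))$, presumably established earlier in the paper or imported from the literature on products with a single $\PP^1$ factor.

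For the inductive step ($t\to t+1$, assuming the statement at $t$), I would apply the Castelnuovo sequence to the divisor $D=Y\times L\subset X_2$ with $L\subset \PP^2$ a line, so that $\Ll[t+1]\otimes \Oo_{X_2}(-D)=\Ll[t]$:
\[
0\longrightarrow \Ii_{\Res_D W}\otimes \Ll[t]\longrightarrow \Ii_W\otimes \Ll[t+1]\longrightarrow \Ii_{\Tr_D W,D}\otimes \bigl(\Ll\boxtimes \Oo_{\PP^1}(t+1)\bigr)\longrightarrow 0.
\]
Specialize $k$ of the $s$ double points so that their supports lie on $D$. Each contributes a double point of $D$ to the trace and a reduced point of $X_2$ to the residual, so the problem splits into a trace question on $Y\times \PP^1$ (with $k$ double points and line bundle $\Ll\boxtimes \Oo(t+1)$) and a residual question on $X_2$ for $\Ll[t]$ with $s-k$ double points plus $k$ simple points. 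The trace is handled by the $Y\times \PP^1$-input; the residual falls under the inductive hypothesis once one absorbs the $k$ simple points via a standard semicontinuity/specialization argument. The integer $k$ is then chosen to balance both sides.

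The crux is the base case $t=2$, where the analogous specialization would yield the unwanted residual $\Ll[1]$. I would attack it via the differential Horace method of Alexander--Hirschowitz: specialize only tangent directions of the selected double points toward $D$, and iterate along a flag $\{\mathrm{pt}\}\subset L\subset \PP^2$ so as to dispose of the $\PP^2$-direction in two refined steps. This should reduce the postulation on $X_2$ to one on $(Y,\Ll)$ together with auxiliary statements on $(Y\times \PP^1,\Ll\boxtimes \Oo(j))$ for $j=1,2$. The numerical bounds on $\alpha$ arise precisely as the thresholds that make such a choice of $k$ and the differential specializations consistent: one has $h^0(X_2,\Ll[2])=6\alpha$ sections against $s(r+3)$ conditions, and the Horace inequalities become a linear system in $s$ and $k$ whose solution forces the explicit lower bounds, growing like $r^3/3$ for $r\ge 8$. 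The main obstacle is exactly this base case: engineering a differential Horace specialization that recovers enough vanishing purely from the regularity of $\Ll$ on $Y$ is what consumes the full strength of the assumption on $\alpha$.
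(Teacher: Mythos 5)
Your overall architecture matches the paper's: Terracini, induction on $t$ via Horace with respect to the divisor $H=Y\times L$, with the trace handled by the non-defectivity of $(Y\times\PP^1,\Ll\boxtimes\Oo(t))$ (the paper's \Cref{a4.1}, imported from \cite{b}) and the base case $t=2$ attacked by differential Horace along a flag $\{\mathrm{pt}\}\subset L\subset\PP^2$ --- this last idea is precisely the paper's ``simultaneous Horace'' \Cref{rmk:simultaneous_Horace} and \Cref{a3}. However, there is a genuine gap in your inductive step as written. If you specialize $k$ double points onto $H$ using only the standard residual sequence, the trace is a union of $k$ double points of $H$ of degree $(r+2)k$, while $h^0(H,\Ll[t+1]_{|H})=(t+2)\alpha$. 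To kill $h^1$ of the trace you need $(r+2)k\le(t+2)\alpha$, but to make the residual count close (in the $j=1$ case, $(r+3)(s-k)+k\le\binom{t+2}{2}\alpha$ with $(r+3)s\approx\binom{t+3}{2}\alpha$) you need $(r+2)k\ge(t+2)\alpha$; so the scheme must fill the trace \emph{exactly}, which is impossible unless $r+2$ divides $(t+2)\alpha$. The paper resolves this by using differential Horace in the inductive step as well: $a=\lfloor(t+1)\alpha/(r+2)\rfloor$ points are fully specialized and $b=(t+1)\alpha-(r+2)a$ points are treated differentially, so the trace $(2\overline S,H)\cup\widetilde S$ has degree exactly $(t+1)\alpha$.

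The second underestimated point is your claim that the residual ``falls under the inductive hypothesis once one absorbs the $k$ simple points via a standard semicontinuity/specialization argument.'' The residual scheme is $Z'\cup\overline S\cup(2\widetilde S,H)$: besides $a$ simple points constrained to lie on $H$ it contains $b$ \emph{double points of $H$} coming from the differential step, and neither piece is covered by the inductive hypothesis on $(X_2,\Ll[t-1])$. Handling them is the bulk of the paper's proof (\Cref{claim10}): it requires a second Horace step down to $\Ll[t-2]$, the separate \Cref{a5.0} for $t=3$, and an auxiliary upper bound $h^0(X_2,\Ii_Z\otimes\Ll[1])\le\max\{0,3\alpha-(r+2)z\}$ (\Cref{a1.2}) for $t=3,4$ --- a statement about $\Ll[1]$ that cannot come from the non-defectivity induction, which starts at $t=2$. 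Absorbing the simple points on $H$ via \Cref{lo2} likewise forces control of $h^0(X_2,\Ii_{Z'}\otimes\Ll[t-2])$, so the induction genuinely reaches back two degrees rather than one. These are not routine details: they are where the explicit lower bounds on $\alpha$ are actually consumed, alongside the base case you correctly identify as the crux.
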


The conditions on $r$ and $\alpha$ may seem very restrictive, but in cases where the regularity of secant varieties is guaranteed (either because it is already known or because it can be easily verified  through computation or other means), these conditions allow for an inductive method that provides regularity for entire families of varieties. For example, through the application of our theorem, it becomes evident that Segre-Veronese varieties, constituted by products of $\mathbb{P}^2$'s exclusively and embedded in degrees greater than or equal to 2, are all regular except the  well known cases of the Veronese surface embedded with either $\mathcal{O}(2)$ or $\mathcal{O}(4)$ and the $3\times 3$ matrices embedded with $\mathcal{O}(2,2)$. Here the precise statement.

\begin{theorem*}[\Cref{prod:P2}]
The pair $((\PP^2)^k, \mathcal{O}_{(\PP^2)^k}(t_1,\ldots, t_k))$ with $t_1,\ldots, t_k\geq 2$ is defective if and only if one of the following holds:
\begin{itemize}
    \item $k=1$ and $t_1\in \{2,4\}$,
    \item $k=2$ and $t_1=t_2 = 2$.
\end{itemize}
\end{theorem*}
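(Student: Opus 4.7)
The plan is to induct on $k$, using Theorem~\ref{a5} as the inductive engine. The three listed defective pairs are classical: the Veronese surfaces $v_2(\PP^2)$ and $v_4(\PP^2)$ (Alexander--Hirschowitz \cite{ah}), and the Segre--Veronese $v_{2,2}(\PP^2\times\PP^2)$ parametrizing $3\times 3$ matrices of rank $\le 2$. For the converse direction one must prove non-defectivity for every other tuple $(t_1,\ldots,t_k)$ with $t_i\ge 2$.

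\emph{Base cases.} For $k=1$ this is Alexander--Hirschowitz. For $k=2$ with $t_1,t_2\ge 3$ one invokes Galuppi--Oneto \cite{go}, and the mixed subrange $\min(t_1,t_2)=2$, $(t_1,t_2)\neq(2,2)$, is handled by earlier partial classifications of $\PP^2\times\PP^2$ embeddings available in the literature.

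\emph{Inductive step, $k\ge 3$.} Write $(\PP^2)^k=Y\times\PP^2$ with $Y=(\PP^2)^{k-1}$ and $\Ll=\Oo(t_1,\ldots,t_{k-1})$, after permuting the factors so that the peeled $\PP^2$ carries the smallest $t_i$. Then $\Ll$ is very ample and $h^1(Y,\Ll)=0$ by K\"unneth, and $(Y,\Ll)$ is non-defective by the inductive hypothesis --- for $k=3$ the permutation is arranged so as to avoid $\Oo(2,2)$. Theorem~\ref{a5} then concludes, provided the numerical bound on $\alpha=\prod_{i<k}\binom{t_i+2}{2}$ is met.

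\emph{Verification of the $\alpha$-bound.} For $k\ge 4$ one has $r=2(k-1)\ge 6$ and $\alpha\ge 6^{k-1}$, which dominates the cubic requirement of Theorem~\ref{a5}, so every tuple is covered. For $k=3$ one has $r=4$ and must have $\alpha\ge 98$; keeping the two largest degrees on $Y$, a short case analysis shows the bound fails only for the sporadic triples $(2,2,2)$, $(2,2,3)$, $(2,2,4)$ and their permutations. These finitely many exceptions constitute the principal obstacle: they must be settled separately, by a direct Horace-style specialization of double points on $(\PP^2)^3$ which exhibits in each case the expected postulation.
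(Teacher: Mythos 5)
Your overall architecture --- induction on $k$ driven by \Cref{a5}, peeling off a $\PP^2$ carrying the smallest degree, with Alexander--Hirschowitz and Galuppi--Oneto as base cases --- is exactly the paper's, and your bookkeeping is right: the numerical threshold of \Cref{a5} is met for all tuples with $k\ge 4$, and for $k=3$ it fails only for $(2,2,2)$, $(2,2,3)$, $(2,2,4)$ up to permutation. However, the proposal has two genuine gaps, and they sit precisely where the real content of the theorem lies. First, the case $(\PP^2\times\PP^2,\Oo_{\PP^2\times\PP^2}(2,t))$ with $t\ge 3$ is not simply ``available in the literature'': the paper has to prove it as \Cref{twofactors_nd}, and even there \Cref{a5} only reaches $t\ge 10$ (one needs $\alpha=\binom{t+2}{2}\ge 60$), so the range $3\le t\le 9$ is settled by explicit Macaulay2 computations. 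Without a citation or a proof, the base of your induction is unsupported.

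Second, and more seriously, announcing that the sporadic triples ``must be settled separately, by a direct Horace-style specialization \dots\ which exhibits in each case the expected postulation'' is a restatement of what must be proved, not an argument. These triples are exactly the content of the paper's \Cref{threefactors_nd}: $(2,2,2)$ is verified by computer, and $(2,2,t)$ for $t=3,4$ is handled by a Horace recursion through the divisor $H=\PP^2\times\PP^2\times\PP^1$ whose own base cases --- $(\PP^2\times\PP^2\times\PP^1,\Oo(2,2,3))$, $(\PP^2\times\PP^2\times\PP^1,\Oo(2,2,4))$, $(\PP^2\times\PP^2\times\PP^2,\Oo(2,2,1))$ and $(\PP^2\times\PP^2\times\PP^2,\Oo(2,2,2))$ --- are again checked computationally via Terracini. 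There is no known purely ``soft'' Horace argument here; you must either supply those computations or construct an explicit degeneration with verified base cases. Since the $k=3$ case (including $(2,2,2)$) feeds the induction for all $k\ge 4$, this gap blocks the entire inductive step, not just three isolated tuples.
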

To our knowledge, this result had not been established previously and it is a complete classification of the defectivity for the product of $\mathbb P^2$'s embedded in any multidegree at least $(2, \ldots ,2)$. 

\smallskip  

We also generalize the above classification to any product of $\mathbb P^1$'s and $\mathbb P^2$'s embedded in multidegree at least $(2, \ldots ,2)$ (cf. \Cref{thm:p1andp2}).

\smallskip

Expanding on our main result's application, we demonstrate another significant implication.
\begin{theorem*}[\Cref{thm:mn2}]
If $d,e \geq 3$ and $n\geq m\geq 1$, then the pair 
\[
(\PP^m\times\PP^n\times (\PP^2)^k, \Oo_{\PP^m\times\PP^n\times (\PP^2)^k} (d,e,t_1, \ldots, t_k))
\]
is not secant defective for all $k$ and $t_1\geq t_2\geq \cdots \geq 2$.
\end{theorem*}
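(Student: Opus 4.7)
My plan is to induct on $k$, the number of $\PP^2$ factors. For the base case $k=0$, the claim reduces to non-defectivity of $(\PP^m\times\PP^n, \Oo(d,e))$ with $d,e\geq 3$ and $n\geq m\geq 1$, which is exactly Galuppi--Oneto's classification \cite{go} recalled in the introduction.

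For the inductive step I will write $X = Y\times\PP^2$ with $Y = \PP^m\times\PP^n\times(\PP^2)^{k-1}$ and $\Ll = \Oo(d,e,t_1,\ldots,t_{k-1})$, so that the total line bundle on $X$ is $\Ll[t_k]$ with $t_k\geq 2$. I then apply \Cref{a5} to the pair $(Y,\Ll)$. Integrality of $Y$, very ampleness of $\Ll$, and non-secant-defectivity of $\Ll$ on $Y$ are all immediate (the last being the inductive hypothesis), while $h^1(Y,\Ll)=0$ follows from K\"unneth combined with $H^i(\PP^a,\Oo(b))=0$ for $i\geq 1$ and $b\geq 0$. The real content is the numerical condition on $r = \dim Y = m+n+2(k-1)$ and $\alpha = h^0(Y,\Ll) = \binom{m+d}{m}\binom{n+e}{n}\prod_{i=1}^{k-1}\binom{t_i+2}{2}$. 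Since $d,e\geq 3$ forces $\binom{m+d}{m},\binom{n+e}{n}\geq 4$ and each $\binom{t_i+2}{2}\geq 6$, the value of $\alpha$ grows at least geometrically in $k$ and polynomially of degree $\geq 3$ in $n$ and $m$, whereas the threshold in \Cref{a5} is only cubic in $r$. A short case analysis then shows that the inductive step applies directly for all tuples outside a small explicit list of boundary configurations.

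The main obstacle I foresee is the treatment of those boundary cases. When $(m,n)\in\{(1,1),(1,2)\}$, the full variety $X$ is itself of the form $(\PP^1)^{j}\times(\PP^2)^{k'}$ with every twist at least $2$, so non-defectivity follows from \Cref{thm:p1andp2}; when $(m,n)=(2,2)$, $X$ is a product of $\PP^2$'s and \Cref{prod:P2} applies. The genuinely tight remaining tuples --- typified by $(m,n,d,e,k)=(1,3,3,3,1)$, where $\alpha = 80$ narrowly misses the $r=4$ threshold of $98$ --- will have to be dispatched either by a direct Horace-differential cohomological argument in the spirit of the proof of \Cref{a5} itself, or by a monotonicity reduction to an adjacent tuple (in which one of $d,e,t_i$ is bumped up) for which the numerical condition already holds.
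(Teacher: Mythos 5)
Your overall architecture matches the paper's: induct on $k$ by attaching one $\PP^2$ factor at a time through \Cref{a5} (the paper packages exactly this step as \Cref{P1orP2}), seed the induction at $k=0$ with \cite{go}, and divert the small cases $(m,n)\in\{(1,1),(1,2)\}$ to \Cref{thm:p1andp2}. You have also correctly located the one place where the numerics break down: $(m,n,d,e)=(1,3,3,3)$, where $\alpha=h^0(\PP^1\times\PP^3,\Oo_{\PP^1\times\PP^3}(3,3))=80$ falls short of the $r=4$ threshold $98$; and since the hypotheses of \Cref{a5} involve only $r$ and $\alpha=h^0(Y,\Ll)$ and not $t$, no choice of $t_1\ge 2$ rescues the direct application. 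The gap is that your proposal does not actually close this case, and one of the two escape routes you offer is not available: there is no downward monotonicity in the degrees. Non-defectivity of $(X,\Oo(d,e,t_1+1))$ does not imply non-defectivity of $(X,\Oo(d,e,t_1))$, and neither this paper nor \cite{go} supplies a lemma reducing a tuple to an ``adjacent tuple in which one of $d,e,t_i$ is bumped up''; bumping a degree up moves you to a case you can prove but says nothing about the case you need.

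The paper's actual fix is a \emph{reordering of the factors}, exploiting that the $\PP^1$-attachment result \Cref{a4.1} (\cite[Theorem 2]{b}) has the much milder numerical hypothesis $\alpha>(r+1)^2$ than the $\PP^2$-attachment result \Cref{a5}. Concretely: if $t_1\ge 3$, the pair $(\PP^3\times\PP^2,\Oo_{\PP^3\times\PP^2}(3,t_1))$ is non-defective by \cite{go}, and then $(\PP^1\times\PP^3\times\PP^2,\Oo(d,3,t_1))$ is non-defective by \Cref{a4.1} since $20\binom{t_1+2}{2}>36$; if $t_1=2$ (hence all $t_i=2$), the base pair $(\PP^3\times\PP^2,\Oo_{\PP^3\times\PP^2}(3,2))$ lies outside the range of \cite{go} and is verified non-defective by a Macaulay2 computation \cite{M2}, after which \Cref{a4.1} again attaches the $\PP^1$. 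In both branches the resulting three-factor pair has $h^0$ large enough that \Cref{P1orP2} takes over for $k\ge 2$. Without this reordering plus the extra computational base case --- or an equivalent ad hoc Horace-differential argument for $(\PP^1\times\PP^3\times\PP^2,\Oo(3,3,2))$, which you gesture at but which would be genuinely nontrivial to carry out by hand --- your proof is incomplete.
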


\smallskip

The main technique used in our work is well-known as the method of \emph{Differential Horace}, introduced by Alexander and Hirschowitz in 1992 (cf. \cite{ah1, ah}). The significant  technical innovation we introduce is the simultaneous application of this method to a divisor $H$ and a divisors of $H$ itself within the same framework  (cf. \Cref{rmk:simultaneous_Horace}).

\smallskip

The structure of the paper is the following.

In the Preliminary \Cref{Preliminaries} we recall the Horace Differential Lemma and how to apply it in our specific instances. We explain its simultaneous version and then we recall how a semicontinuity argument allows to reduce the proof of the regularity to a specialization of points.

In \Cref{MainResult} we collect the lemmas that lead to the main theorem (\Cref{a5}) which are the first steps of the inductive procedure. Some of those lemmas are technical and need computational claims whose proofs are collected in the final \Cref{Section:claims}.

In \Cref{Applications}, we present straightforward examples that demonstrate the broad applicability of our theorem. These instances play a crucial role in finalizing the classification of defective cases, resolving numerous open cases comprehensively. In particular we prove the already mentioned \Cref{prod:P2}, \Cref{thm:p1andp2} and \Cref{thm:mn2}.

\subsection*{Acknowledgments} EB and AB are member of GNSAGA of INDAM.
AB and TM have been partially supported by the Thematic Research Programme ``Tensors: geometry, complexity and quantum entanglement", University of Warsaw, Excellence Initiative – Research University and the Simons Foundation Award No. 663281.
All authors are members of TensorDec laboratory of the Mathematical Department of Trento.

Funded by the European Union under NextGenerationEU. PRIN 2022 Prot. n. 2022ZRRL4C$\_$004. Views and opinions expressed are however those of the authors only and do not necessarily reflect those of the European Union or European Commission.  Neither the European Union nor the granting authority can be held responsible for them.
\\
\includegraphics[scale=0.5]{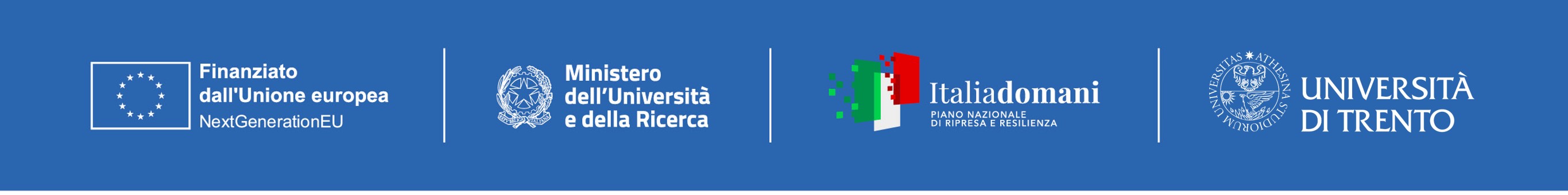} 

\section{Preliminaries}\label{Preliminaries}

\begin{notation}
Let $X$ be a projective variety. For any non singular point $p\in X_{\reg}$ of $X$, with  $(2p,X)$ we denote the closed subscheme of $X$ with $(\Ii_{p,X})^2$ as its ideal sheaf. The degree of this scheme is $\deg ((2p,X)) =\dim X+1$ and its support is $(2p,X)_{\red} =\{p\}$. For any finite set $S\subset X_{\reg}$, set $(2S,X):= \cup_{p\in S} (2p,X)$. We often write $2p$ and $2S$ instead of $(2p,X)$ and $(2S,X)$ if the ambient variety is obvious from the context.
\end{notation}

\begin{remark}[Terracini Lemma]\label{terracini}
For any positive integer $z$, denote the $z$-th secant variety of a projective variety $X$ as $\sigma _z(X)$. The celebrated  Terracini Lemma (cf. \cite{Terracini}, \cite[Cor. 1.11]{a}, \cite[5.3.1.1]{l}) can be stated in the following form: The dimension $\dim \sigma _z(X)$ is the dimension of the linear span of the zero-dimensional scheme $(2S,X)$, where $S$ is a general subset of $X$ with $\#S=z$. As a consequence, if the embedding of $X$ is induced by the complete linear system $|\mathcal{L}|$, then $\dim \sigma _z(X) =h^0(X,\mathcal{L})-1-h^0(X,\Ii _{(2S,X)}\otimes \mathcal{L})$. Hence $\dim \sigma _z(X) = z(\dim X+1)-1$ if and only if $h^1(X,\Ii _{(2S,X)}\otimes \mathcal{L}) =h^1(X,\mathcal{L})$. 
\end{remark}
\begin{definition}
    Let  $\mathcal{L}$ be a line bundle on a projective variety $X$  with $h^1(X,\mathcal{L})=0$. The pair $(X,\mathcal{L})$  is said to be \emph{not-$z$-secant defective} if for a general union $Z$ of $z$ double points of $X$ the following equality holds $$h^0(X,\mathcal{I}_Z\otimes \mathcal{L})\cdot h^1(X,\mathcal{I}_Z\otimes \mathcal{L})=0.$$ Should $(X,\mathcal{L})$ consistently be not-$z$-secant defective for all $z\geq 1$, it is referred to as \emph{not being secant defective}.
\end{definition}

Remark that $h^0(X,\mathcal{I}_Z\otimes \mathcal{L})=0$ is equivalent to the fact that $\sigma_z(X)$ fills the ambient space; while $h^1(X,\mathcal{I}_Z\otimes \mathcal{L})=0$ corresponds to $\dim(\sigma_z(X))=z(\dim X +1)-1$.

\subsection{Horace Differential Lemma}
One of the main tools that we will use in this paper is the Horace Differential Lemma (cf. \cite{ah1, ah}) and its simultaneous version in codimensions 1 and 2. We present here the versions that we will use.

We first recall few basic definitions.

Let $X$ be an {integral} projective variety and $H$ an { integral} effective Cartier divisor of $X$. For any zero-dimensional scheme $W\subset X$ let $\Res_H(W)$ denote the residual scheme of $W$ with respect to $H$, i.e. the closed subscheme of $X$ with $\Ii_W:\Ii_H$ as its ideal sheaf.
We have $\deg (W) =\deg (\Res_H(W)) +\deg (W\cap H)$, $\Res_H(W)\subseteq W$, $\Res_H(W) =W$ if $W\cap H=\emptyset$, $\Res_H(W) =\emptyset$ if $W\subset H$ and $\Res_H(W) =\Res_H(A)\cup \Res_H(B)$ if $W=A\cup B$ and $A\cap B=\emptyset$. If $p\in H_{\reg}\cap X_{\reg}$, then $\Res_H((2p,X)) = \{p\}$ and $(2p,X)\cap H =(2p,H)$. For any line bundle $\Rr$ on $X$ there is an exact sequence
\begin{equation}\label{eqp1}
0 \to \Ii_{\Res_H(W)}\otimes \Rr(-H)\to \Ii_W\otimes \Rr \to \Ii_{W\cap H,H}\otimes \Rr_{|H} \to 0
\end{equation}
of coherent sheaves on $X$ which we call the {\it residual sequence of $H$}. The scheme $W\cap H$ is often called the trace of $W$ on $H$ and denoted by $\Tr_H(W)$.

The following definition is originally from \cite{ah} and we quote it as stated in \cite{bcgi}.
\begin{definition}\label{def:vertically:graded}
In the algebra of formal functions $\kappa
[[{\bf x},y]]$, where ${\bf x} = (x_1,\ldots,x_{n-1})$, a {\it
vertically graded} (with respect to $y$) ideal is an ideal of the
form:
$$
I = I_0 \oplus I_1y \oplus \cdots \oplus I_{m-1}y^{m-1}\oplus (y^m)
$$
where for $i = 0,\ldots,m-1$, $I_i\subset \kappa [[{\bf x}]]$ is an
ideal.
\par
Let $X$ be an $n$-dimensional projective variety, let $H$ be an integral divisor on $X$. We say that $Z \subset X$ is a
{\it vertically graded subscheme} of $X$ with base $H$ and support
$z\in H$, if $Z$ is a $0$-dimensional scheme with support at a regular point $z$ of $X$ such that there is a regular system of parameters  $({\bf x},y)$
at  $z$ such that $y=0$ is a local equation for $H$ and the ideal of
$Z$ in $\widehat {\cal O}_{X,z} \cong \kappa [[{\bf x},y]]$ is
vertically graded.

 Let $Z\subset X$ be a vertically graded subscheme with base $H$, and $p\geq 0$
be a fixed integer; we denote by  $\Res^{p}_H(Z) \subset X$ and
$\Tr^{p}_H(Z)\subset H$  the closed subschemes defined, respectively, by
the ideals:
$$ {\cal I}_{\Res^{p}_H(Z)} := {\cal I}_{Z}+
({\cal I}_{Z}:{\cal I}^{p+1}_H){\cal I}^{p}_H, \qquad \qquad \qquad
{\cal I}_{\Tr^{p}_H(Z),H} := ({\cal I}_{Z}:{\cal I}^{p}_H)\otimes
{\cal O}_H
$$
and we call them the \emph{$p$-th virtual residue (trace) of $Z$ with respect to $H$} (respectively).
Notice
that for $p=0$ we get the usual trace $\Tr_H(Z)$ and residual schemes $\Res_H(Z)$.

Let $Z_1,\ldots ,Z_r\subset X$ be vertically graded subschemes with base $H$
and supports $z_i \in H$, $i=1,\ldots , r$ pairwise distinct,  $Z=Z_1\cup \cdots \cup Z_r$, and ${\bf
p}=(p_1,\ldots ,p_r)\in {\mathbb N}^r$.
\par We set:
$$\Tr^{\bf p}_H(Z):= \Tr^{p_1}_H(Z_1)\cup \cdots  \cup \Tr^{p_r}_H(Z_r)$$
$$\Res^{\bf p}_H(Z):= \Res^{p_1}_H(Z_1)\cup \cdots \cup
\Res^{p_r}_H(Z_r).$$
\end{definition}

\begin{lemma}[Horace Differential Lemma \cite{ah}]
\label{diff:H:lemma:vertically}
Let $X$ be {an integral} projective variety and $H$ be an integral {effective Cartier} divisor. Let $\mathcal{R}$ be a very ample line bundle. Let $W\subset X$ be a $0$-dimensional scheme supported in the regular locus of $X$. 
Let $A_1,\ldots ,A_r,\, B_1,\ldots ,B_r$ be $0-$dimensional irreducible
subschemes of $X$ such that $A_i\cong B_i$, $i=1,\ldots ,r$, $B_i$
has support on $H$ and is vertically graded with base $H$, and the
supports of $A=A_1\cup \cdots \cup A_r$ and $B=B_1\cup \cdots \cup B_r$ are general in their respective Hilbert schemes. Let ${\bf
p}=(p_1,\ldots ,p_r)\in {\mathbb N}^r$. Then for $i=0,1$:
$$ h^i(X,{\cal I}_{W\cup A}\,\otimes \mathcal{R})\leq h^i(X,{\cal I}_{\Res_HW\cup \Res^{\bf p}_H(B)}\otimes \mathcal{R}(-H))+h^i(H,{\cal I}_{\Tr_HW\cup \Tr^{\bf p}_H(B),H}\otimes \mathcal{R}|_H). $$
\end{lemma}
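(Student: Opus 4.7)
The plan is to reduce to the residual-trace decomposition via a one-parameter degeneration combined with upper semicontinuity of cohomology. Concretely, I would construct a flat family of zero-dimensional subschemes $A(s)\subset X$ parametrized by $s\in \mathbb{A}^1$, disjoint from $W$ for $s\neq 0$, such that for general $s$ the scheme $A(s)$ has the same Hilbert scheme point as $A$ (so that $h^i(X,\Ii_{W\cup A(s)}\otimes \mathcal{R})=h^i(X,\Ii_{W\cup A}\otimes \mathcal{R})$ by generality of the supports), while the flat limit $A(0)$ is supported on $H$ with the property that $\Tr_H(A(0)) = \Tr^{\bf p}_H(B)$ and $\Res_H(A(0)) = \Res^{\bf p}_H(B)$. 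Upper semicontinuity of cohomology along $\mathbb{A}^1$ then yields $h^i(X,\Ii_{W\cup A}\otimes \mathcal{R}) \leq h^i(X,\Ii_{W\cup A(0)}\otimes \mathcal{R})$, and applying the long exact cohomology sequence of \eqref{eqp1} to $W\cup A(0)$ with respect to $H$ gives the claimed inequality.

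The core of the argument is the local construction of $A(s)$ near each support point $z_i\in H$ of $B_i$. In a formal neighborhood with coordinates $({\bf x}, y)$, where $y=0$ is a local equation for $H$, the vertically graded ideal of $B_i$ reads
\[
I_{B_i} \;=\; I_{i,0}\oplus I_{i,1}\,y\oplus\cdots \oplus I_{i,m_i-1}\,y^{m_i-1}\oplus (y^{m_i}).
\]
The degeneration first translates $B_i$ by $s$ in a direction transverse to $H$ and then applies a weighted rescaling on the graded layers of $I_{B_i}$ of $y$-degree below $p_i$: concretely, replace each generator in layer $I_{i,j}\,y^j$ with $j<p_i$ by $s^{-1}$ times itself, equivalently clear denominators by multiplying the ideal by $s^{p_i}$. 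By the very definition of vertically graded ideal, the flat limit as $s\to 0$ has $H$-trace given by $(I_{B_i}:\Ii_H^{p_i})\otimes \mathcal{O}_H$, i.e.\ $\Tr^{p_i}_H(B_i)$, and $H$-residual given by $I_{B_i} + (I_{B_i}:\Ii_H^{p_i+1})\,\Ii_H^{p_i}$, i.e.\ $\Res^{p_i}_H(B_i)$. The very ampleness of $\mathcal{R}$ is used to globalize these local models to a single flat family on $X$ and to legitimize the generic choice of supports.

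The main obstacle I anticipate is the weighted flat-limit computation. The subtle point is that because $B_i$ is already contained in $H$, a naive specialization $A_i \rightsquigarrow B_i$ would collapse everything into the trace and give no useful bound; the differential rescaling above is what remembers the approach direction of $A_i$ off $H$ and causes the graded layers of $I_{B_i}$ to separate at the threshold $p_i$. Once the local family is proven to be flat with the prescribed limit, the remainder — invoking upper semicontinuity on $\mathbb{A}^1$ and bounding $h^i$ of the middle term of \eqref{eqp1} by the sum of $h^i$ of the outer terms via the long exact cohomology sequence — is formal, and essentially reproduces the original argument of Alexander and Hirschowitz in \cite{ah}.
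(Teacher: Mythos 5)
The paper offers no proof of this lemma: it is quoted verbatim from Alexander--Hirschowitz \cite{ah}, so your attempt has to be measured against their argument. Your overall frame (degenerate $A$ onto $H$, invoke semicontinuity, then split via the residual sequence \eqref{eqp1}) has the right flavor, and you correctly observe that the naive specialization $A\rightsquigarrow B$ collapses everything into the ordinary trace and is useless. But the fix you propose does not work. First, the ``weighted rescaling'' is vacuous as written: for fixed $s\neq 0$, multiplying the generators of a layer $I_{i,j}y^j$ by the unit $s^{-1}$ (or the whole ideal by $s^{p_i}$) does not change the ideal, so your family is just the translated family, whose flat limit is $B_i$ with its \emph{ordinary} trace $\Tr_H(B_i)$ and residual $\Res_H(B_i)$ --- exactly the decomposition you were trying to avoid.

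Second, and fatally, the limit scheme $A(0)$ you postulate --- a single subscheme of $X$ with $\Tr_H(A(0))=\Tr^{\bf p}_H(B)$ and $\Res_H(A(0))=\Res^{\bf p}_H(B)$ --- does not exist in general, so \emph{no} flat family, however cleverly reparametrized, can produce it. Take the basic case $B_i=(2p',X)$ with $p_i=1$ and $\dim X=n\ge 2$: the virtual trace is the reduced point $\{p'\}$ and the virtual residue is $(2p',H)$, of degree $n$ and contained in $H$. If a zero-dimensional scheme $C$ supported at $p'$ satisfied $C\cap H=\{p'\}$ scheme-theoretically, then $\Ii_C+(y)=({\bf x},y)$ forces $\Ii_C$ to contain elements $x_i-yc_i$ for all $i$ (where $y=0$ is a local equation of $H$); hence $C$ lies on a smooth curve germ transverse to $H$ and is curvilinear, so $\Res_H(C)$ is again curvilinear on that curve and, once its degree is at least $2$, is not contained in $H$ --- in particular it is never $(2p',H)$. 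This is precisely why the lemma is called \emph{differential}: the two terms on the right-hand side are not the trace and residual of any limit subscheme. The actual proof in \cite{ah} works instead with the one-parameter family of sections $f_s\in H^0(X,\Ii_{W\cup A(s)}\otimes\Rr)$, expands in the parameter, and shows that after restricting to $H$ and dividing by successive powers of the equation of $H$ the relevant coefficients vanish on the virtual trace and the virtual residue; this layer-by-layer analysis of the expansion in $s$ is the missing idea, and it cannot be replaced by a single application of semicontinuity to a flat family of subschemes of $X$.
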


\begin{lemma}[Horace Differential Lemma for double points]\label{Differential Horace lemma}
Let $X$ be an {integral} projective variety and $H\subset X$ be an integral {effective Cartier} divisor. Let $\mathcal{R}$ be a very ample line bundle. Fix  
an integer $r>0$.
Let $W\subset X$ be a zero-dimensional scheme supported in the regular locus of $X$, $\widetilde{S}\subset X$ be a general subset of $H$ with $\#\widetilde{S}=r$ and $\widetilde{Z}\subset X$ be a general set with $\#\widetilde{Z} = r$. Then for $i\in \{0,1\}$ we have
$$h^i(X, \Ii_{W\cup (2\widetilde{Z},X)}\otimes \mathcal{R}) \leq h^i(X,\Ii_{\Res_HW\cup (2\widetilde{S},H)}\otimes \mathcal{R}(-H))+ h^i(H,\Ii _{(\Tr_HW)\cup \widetilde{S}}\otimes \mathcal{R}_{|H}).
$$
\begin{proof}
    We apply \Cref{diff:H:lemma:vertically} with $A=(2\widetilde{Z},X)$, $B= (2\widetilde{S},X)$ and $\mathbf{p}=(1, \ldots , 1)$.
\end{proof}
\end{lemma}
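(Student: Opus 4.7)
The plan is to deduce the bound directly from \Cref{diff:H:lemma:vertically} by making the right choice of auxiliary schemes. I would set $A := (2\widetilde{Z},X)$, i.e.\ the union of $r$ double points of $X$ with general support $\widetilde Z$, and $B := (2\widetilde S,X)$, i.e.\ the union of $r$ double points of $X$ whose supports lie on $H$ and are general there. Since $H$ is an integral Cartier divisor, at each $p\in \widetilde S$ it is locally cut out by a regular parameter $y$; in a regular system $(\mathbf{x},y)$ at $p$ the ideal of the corresponding component $B_i$ is $(\mathbf{x},y)^2$, which decomposes as $(\mathbf{x})^2\oplus(\mathbf{x})y\oplus(y^2)$ and is therefore vertically graded with base $H$. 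Each $A_i$ is isomorphic to the corresponding $B_i$ (both are double points of $X$), and their supports are general in the appropriate Hilbert schemes of $X$ and of $H$, so every hypothesis of \Cref{diff:H:lemma:vertically} is satisfied with $\mathbf{p} = (1,\ldots,1)$.

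The remaining task is to identify $\Res^{\mathbf 1}_H(B)$ and $\Tr^{\mathbf 1}_H(B)$ explicitly. Work locally at a single $p\in \widetilde S$, with $\Ii_H = (y)$ and $\Ii_{B_i} = (\mathbf{x},y)^2$. Since $y^2\in(\mathbf{x},y)^2$, the colon ideal $\Ii_{B_i}:(y^2)$ is the whole local ring, so
$$\Ii_{\Res^{1}_H(B_i)} \;=\; \Ii_{B_i} + (\Ii_{B_i}:\Ii_H^2)\,\Ii_H \;=\; (\mathbf{x},y)^2 + (y) \;=\; (\mathbf{x})^2 + (y),$$
which is exactly the ideal of the double point $(2p,H)$, regarded as a subscheme of $X$. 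A short check shows that $\Ii_{B_i}:(y) = (\mathbf{x},y)$: the inclusion $\supseteq$ is immediate, while for $\subseteq$ one uses that $y\notin(\mathbf{x},y)^2$, so the constant term of any element of $\Ii_{B_i}:(y)$ must vanish. Tensoring by $\Oo_H$ yields $\Ii_{\Tr^1_H(B_i),H} = (\mathbf{x})$, the maximal ideal of $p$ in $H$, so $\Tr^1_H(B_i)$ is the reduced point $\{p\}$ in $H$.

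Combining these local computations over the $r$ centers gives the global identities $\Res^{\mathbf 1}_H(B) = (2\widetilde S,H)$ and $\Tr^{\mathbf 1}_H(B) = \widetilde S$, and substituting them into the inequality provided by \Cref{diff:H:lemma:vertically} yields precisely the stated bound. There is no genuine obstacle beyond this vertical-grading bookkeeping; the only subtlety worth flagging is the need to distinguish the $X$-double point $(2p,X)$ used to build $B$ from the $H$-double point $(2p,H)$ that appears only after taking the first virtual residue.
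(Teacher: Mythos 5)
Your proposal is correct and is exactly the paper's proof: the authors also apply \Cref{diff:H:lemma:vertically} with $A=(2\widetilde{Z},X)$, $B=(2\widetilde{S},X)$ and $\mathbf{p}=(1,\ldots,1)$, leaving the identification $\Res^{\mathbf 1}_H(B)=(2\widetilde S,H)$ and $\Tr^{\mathbf 1}_H(B)=\widetilde S$ implicit. Your local computation of the virtual residue and trace is accurate and simply makes explicit what the paper's one-line proof omits.
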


The following \Cref{rmk:simultaneous_Horace} is one of our main strategy since it allows to apply Horace Differential Lemma  simultaneously to a divisor $H'$ of $X$ and a divisor $E$ of $H'$ itself.

\begin{lemma}[Simultaneous Horace Differential Lemma]\label{rmk:simultaneous_Horace}
Let $X$ be an integral projective variety with a very ample line bundle $\mathcal{R}$.
Take integral {effective Cartier} divisors $H$ and $H'$ such that $H\ne H'$ and $E:=H\cap H'$ is integral.  Let $\widetilde{S}$ (resp. $\widetilde{Z}$) be a set of $r$ general points of $E$ (resp. $H$).
Let $W$ be a zero-dimensional subscheme of $X$ contained in the regular locus of $X$ and such that $W \cap \widetilde{S} = W \cap \widetilde{Z} = \emptyset$. 
For $i=0,1$ we have
$$h^i(X,\Ii_{W\cup (2 \widetilde{Z},H)}\otimes \Rr) \le h^i(X,\Ii_{\operatorname{Res}_{H'}(W) \cup (2 \widetilde{S},E)}\otimes \mathcal{R}(-H')) +h^i(H', \Ii_{(\operatorname{Tr}_{H'}(W))
\cup \widetilde{S}}\otimes \mathcal{R}_{|H'}).$$
\end{lemma}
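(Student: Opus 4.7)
My plan is to derive \Cref{rmk:simultaneous_Horace} as a direct application of \Cref{diff:H:lemma:vertically} with divisor $H'$, through a careful choice of the schemes $A_i$, $B_i$ and an explicit computation of their virtual trace and residual.

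I take $A_i := (2z_i, H)$ for $z_i \in \widetilde Z$, so $A = (2\widetilde Z, H)$, and $B_i := (2s_i, H)$ for $s_i \in \widetilde S \subset E \subseteq H'$. Both $A_i$ and $B_i$ are double points on the smooth locus of $H$, hence are abstractly isomorphic $0$-dimensional subschemes of $X$ of length $\dim X$; $B_i$ is supported on $H'$; and the generality of $\widetilde Z$ in $H$ and of $\widetilde S$ in $E$ provides the required generic Hilbert-point conditions. Since $s_i$ is a general (hence smooth) point of the integral intersection $E$, I can choose local coordinates $(t_1, \ldots, t_{n-2}, u, u')$ at $s_i$ in which $u = 0$ defines $H$ and $u' = 0$ defines $H'$; the ideal of $B_i$ is then $(u) + (t_1, \ldots, t_{n-2}, u, u')^2$, which expanded in powers of $u'$ gives the vertically graded form with $I_0 = (u) + (t_1, \ldots, t_{n-2}, u)^2$, $I_1 = (t_1, \ldots, t_{n-2}, u)$, and $m = 2$.

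A short calculation from the formulas in \Cref{def:vertically:graded} with $\mathbf p = (1, \ldots, 1)$ then yields $\Res^{1}_{H'}(B_i) = (2s_i, E)$ and $\Tr^{1}_{H'}(B_i) = \{s_i\}$, with the length balance $(n-1) + 1 = n$ providing a useful sanity check. Feeding this into \Cref{diff:H:lemma:vertically} applied with divisor $H'$, line bundle $\Rr$, and the above data yields exactly the claimed inequality. The main technical step is the symbolic ideal manipulation in the previous paragraph; the conceptual point is that a naive specialization of $(2z_i, H)$ to $(2s_i, H)$ would flip trace and residual (producing $(2\widetilde S, E)$ on $H'$ as trace and $\widetilde S$ as residual), so it is precisely the vertically graded deformation built into the Horace Differential Lemma that exchanges these back into the ``differential'' distribution required by the simultaneous statement.
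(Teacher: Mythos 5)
Your proposal is correct and follows essentially the same route as the paper: both apply \Cref{diff:H:lemma:vertically} with the divisor $H'$, $A=(2\widetilde{Z},H)$, $B=(2\widetilde{S},H)$ and $\mathbf{p}=(1,\ldots,1)$, and both rest on the same local computation showing that the virtual trace of $(2s_i,H)$ with respect to $H'$ is the reduced point $s_i$ while the virtual residue is $(2s_i,E)$. Your explicit vertically graded decomposition $I_0=(u)+(t_1,\ldots,t_{n-2},u)^2$, $I_1=(t_1,\ldots,t_{n-2},u)$, $m=2$ checks out and matches the paper's (less explicit) description.
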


\begin{figure}[h]
\includegraphics[width=10cm]{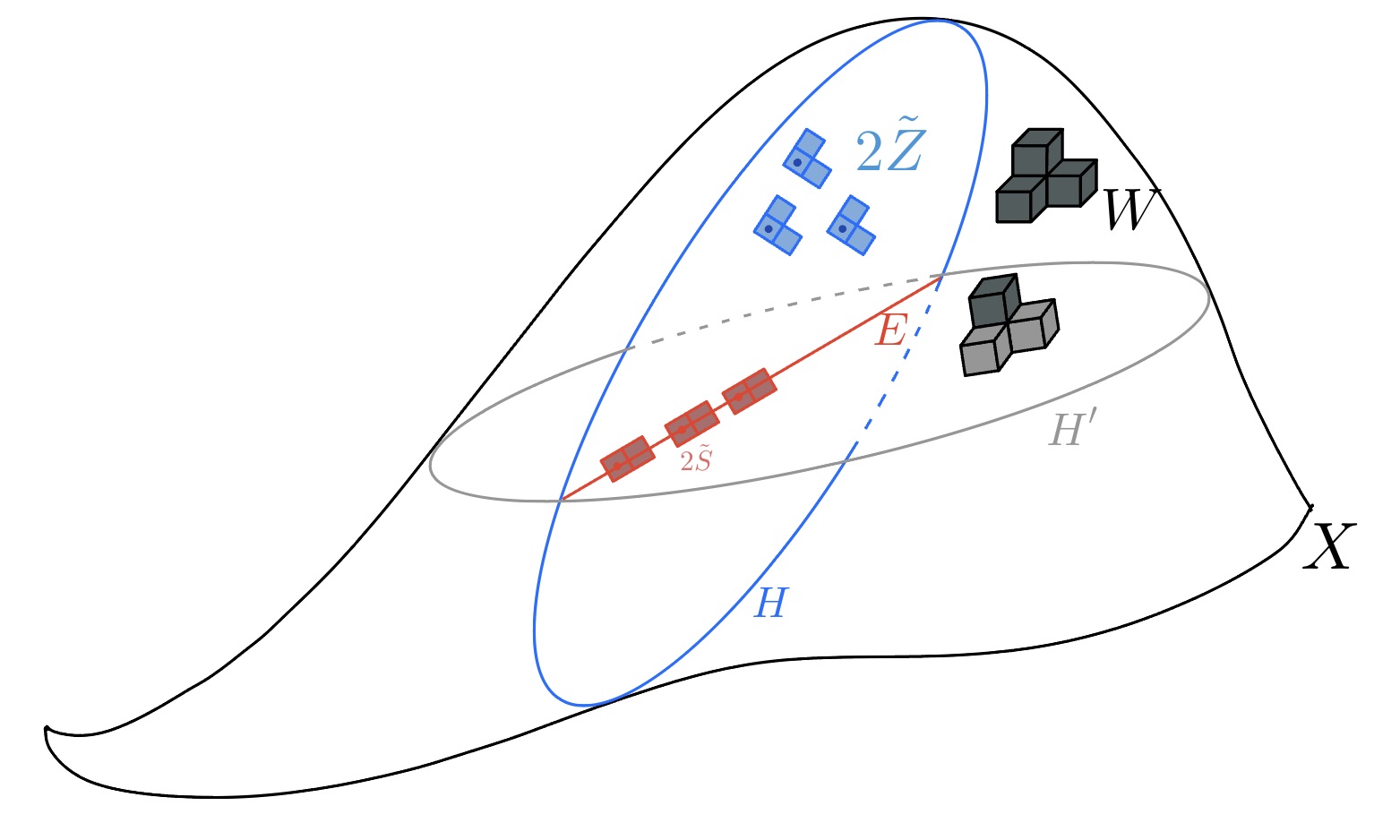}
\caption{{\footnotesize{This is an intuitive graphical representation of \Cref{rmk:simultaneous_Horace} in a special case where $\dim X=3$, $\dim H=\dim H'=2$ so $\deg (2\widetilde Z)= 3 (\# \widetilde Z)$  and $\dim E=1$ so $\deg (2 \widetilde S)= 2 (\# \widetilde S)$. The residual of $W$ mod $H'$ is in dark gray, while the trace in light gray. The dots denote the supports of the corresponding schemes.}}}
\end{figure}

\begin{proof}
The main idea is to apply \Cref{diff:H:lemma:vertically} to $H=H'$,  $A = (2\widetilde{Z}, H)$, $B = (2 \widetilde{S}, H)$ and $\mathbf{p} = (1,\ldots,1)$, but let us give more details.

Set $n:= \dim X$. We degenerate each point of $\widetilde{Z}$ to a different point of $\widetilde{S}$. For each $p\in \widetilde{Z}$ call $p'$ the corresponding point of $\widetilde{S}$. In the degeneration the zero-dimensional scheme $(2 p,H)$ would have as a limit the scheme $(2p',H)$. Let $\kappa[[x_1,\dots ,x_n]]$ denote the formal completion of the local ring $\Oo_{X,p'}$ with $x_n=0$ a local equation of $H$ (not of $H'$) at $p'$. The scheme $(2p',H)$ is vertically graded in the sense of \cite[p. 392]{ah}, i.e. locally one can write the ideal of $(2p',H)$ as $\bigoplus_{i= 0}^mI_ix_{n-1}^i$ with $x_{n-1}$ the local equation of $H'$ and $I_i\subset \mathbb{K}[[x_1, \ldots , x_{n-2},x_n]]$. The virtual trace of $(2p',H)$ with respect to $H'$ is $p'$, while
the virtual residue is $(2p',E)$, because $H\cap H'=E$. Small explanation: if we started with $(2p',X)$ for the usual  Horace Differential Lemma the usual virtual trace would $p'$ and virtual residue $(2p',H')$.
Here for the smaller scheme we get the same virtual trace and exactly $(2p',E)$ as virtual residue, because $p$ goes to $p'$ remaining in $H$ and $E=H\cap H'$.
\end{proof}

\subsection{Specializing and adding simple points}

\begin{notation}\label{NotationYandX}
From now on we fix the following:
\begin{itemize}
    \item $Y$ will be an integral projective variety of dimension $r$,

    \item $\Ll$ a very ample line bundle on $Y$ such that $h^1(Y,\Ll)=0$,
     and $\alpha:= h^0(Y,\Ll)$,
    \item $X_m:= Y\times \PP^m$ with $m\geq 1$,
    \item { $H := Y\times \PP^{m-1}$ for some hyperplane $\PP^{m-1}\subseteq \PP^m$}.
\end{itemize}
With these assumptions we have
$h^0(X_m,\Ll[t]) =\alpha \binom{m+t}{m}$ and $h^1(X_m,\Ll[t]) =0$ for all $t\in \NN$.
\end{notation}

\begin{remark}\label{lol1}
Let $f: U_1\to U_2$ be a surjective submersion of smooth and connected quasi-projective varieties.
Fix $p\in U_1$. Let $A_1$ (resp. $A_2$) be the completion of the local ring $\Oo_{U_1,p}$ (resp. $\Oo_{U_2,f(p)}$).
Since $U_1$ is smooth at $p$ and $U_2$ is smooth at $f(p)$, $A_i$ is a ring of power series in $\dim A_i$ variables. 
Set $n_2:=\dim A_2$ and $n_1:= \dim A_1$. Since $f$ is a submersion, we have formal coordinates $x_1,\dots ,x_{n_1}$ of $A_1$
such that the variables $x_1,\dots ,x_{n_2}$ are formal variables for the power series ring $A_2$. Thus { the scheme $2f(p)\subset U_2$ is the scheme-theoretic image of $f(2p)$.}
If $S\subset U_1$ is a general finite set, then $f_{|S}$ is injective and $f(S)$ is a general subset of $U_2$ with cardinality $\#S$.
Thus if $Z\subset U_1$ is a general union of $z$ double points of $U_1$, then $f(Z)$ is a general union of $z$ double points of $U_2$.

Let $X_m$ and $Y$ be as in \Cref{NotationYandX}. Consider $\pi_1$ and $\pi_2$ as the projections from $X_m$ onto its first and second factors, respectively. Let $U_1:= Y_{\reg}$, $U_2:= \pi_1^{-1}(Y_{\reg})=X_{m,\reg}$ and $\pi := \pi_{1|U_2}$.
If $Z\subset X_m$ is a general union of $z$ double points of $X_m$ and $u$ double points of $H$, then $\pi_1(Z)$ is a general union of  $z+u$ double points of $Y$.
Recall that $\Ll[0]:= \pi_1^\ast (\Ll)$ and hence $\pi _1^\ast$ induces an isomorphism $H^0(Y,\Ll)\cong H^0(X_m,\Ll[0])$. Thus $H^0(X_m,\Ii_Z\otimes \Ll[0])\cong 
H^0(Y,\Ii_{f(Z)}\otimes \Ll)$. If $(Y, \mathcal{L})$ is not-$(z+u)$-secant defective, then $h^0(Y,\Ii_{f(Z)}\otimes \Ll) = \max \{0,\alpha -(r+1)(z{+u})\}$.
\end{remark}

\begin{remark}[Semicontinuity]\label{specialization}  Let $X$ be an integral projective variety, $H\subset X$ be an integral effective Cartier divisor of $X$, $\Rr$ a line bundle on $X$ and $Z_X\subset X$
a zero-dimensional scheme on $X$. Let $Z_H$ be a specialization of $Z_X$ on $H$. By the semicontinuity theorem for cohomology (cf. \cite[Ch. III]{Har}), if one can prove that $h^i(X,\mathcal{I}_{Z_H}\otimes \mathcal{R})=0$
for special scheme $Z_H$ then also $h^i(X,\mathcal{I}_{Z_X}\otimes \mathcal{R})=0$ for the general scheme $Z_X$.
\end{remark}

\begin{lemma}\label{lo2}
Let $X$ be an integral projective variety such that $\dim X\ge 2$, $H\subset X$ be an integral effective Cartier divisor of $X$, $\Rr$ a line bundle on $X$
and $W\subset X$ a zero-dimensional scheme. Set $\beta:= h^0(X, \Ii_W\otimes \Rr)$. Fix an integer $e\geq 0$ and let $S\subset H$ be a general subset of $H$ with cardinality $e$.
Then $h^0(X,\Ii_{W\cup S}\otimes \Rr) \le \max \{\beta -e,h^0(X,\Ii_{\Res_H(W)}\otimes \Rr(-H))\}$.
\end{lemma}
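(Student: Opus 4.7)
My approach is to combine the residual exact sequence of $H$ with a standard general-position argument: adding a general point of $H$ cuts down the section space $H^0(X,\Ii_W\otimes \Rr)$ by one dimension until one reaches the subspace of sections vanishing identically on $H$, which by \eqref{eqp1} is exactly $H^0(X,\Ii_{\Res_H(W)}\otimes \Rr(-H))$.

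\emph{Step 1 (identify the subspace of sections vanishing on $H$).} Write $V_0 := H^0(X, \Ii_W\otimes \Rr)$, so $\dim V_0=\beta$, and set $u := h^0(X, \Ii_{\Res_H(W)}\otimes \Rr(-H))$. Taking global sections in the residual sequence \eqref{eqp1} yields
\[
0 \to H^0(X, \Ii_{\Res_H(W)}\otimes \Rr(-H)) \to V_0 \to H^0(H, \Ii_{W\cap H, H}\otimes \Rr_{|H}),
\]
which identifies $U := H^0(X, \Ii_{\Res_H(W)}\otimes \Rr(-H))$ with the subspace of $V_0$ consisting of sections that vanish identically on $H$. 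Thus $u\le \beta$ and the image $V_0/U$ inside $H^0(H, \Ii_{W\cap H, H}\otimes \Rr_{|H})$ has dimension $\beta-u$.

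\emph{Step 2 (upper bound via the evaluation map).} Since $W$ is zero-dimensional and $S$ is general, we may assume $S\cap W=\emptyset$, so there is a well-defined evaluation map $\mathrm{ev}_S\colon V_0 \to \bigoplus_{p\in S}\kappa(p)$ whose kernel is exactly $H^0(X, \Ii_{W\cup S}\otimes \Rr)$. Every section in $U$ vanishes on $H\supset S$, so $\mathrm{ev}_S$ factors through $V_0/U$, and therefore its rank is at most $\min(e,\beta-u)$.

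\emph{Step 3 (general points impose independent conditions).} I would then show that this upper bound is attained for general $S$. The embedding $V_0/U\hookrightarrow H^0(H, \Ii_{W\cap H, H}\otimes \Rr_{|H})$ realises $V_0/U$ as a finite-dimensional space of sections of the torsion-free sheaf $\Ii_{W\cap H, H}\otimes \Rr_{|H}\subseteq \Rr_{|H}$ on the integral variety $H$; any nonzero section of such a sheaf has a proper vanishing locus on $H$. An obvious induction on $e$ (pick $p_1$ outside the common zero locus of a basis of $V_0/U$, then work with the subspace vanishing at $p_1$, and so on) shows that for general $S=\{p_1,\dots,p_e\}\subset H$ the rank of $\mathrm{ev}_S$ is exactly $\min(e,\beta-u)$. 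Consequently
\[
h^0(X, \Ii_{W\cup S}\otimes \Rr) = \beta - \min(e,\beta-u) = \max\{\beta-e,\ u\},
\]
which is the claimed inequality (in fact an equality).

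The only substantive point is Step~3: the general-position statement that general points of the integral variety $H$ impose independent conditions on the finite-dimensional subspace $V_0/U$. This reduces to the standard fact that a nonzero section of a torsion-free coherent sheaf on an integral variety has proper vanishing locus, and that the successive loci appearing in the induction remain proper. Everything else is bookkeeping around the residual exact sequence.
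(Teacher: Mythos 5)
Your argument is correct and is essentially the paper's proof in different clothing: both proceed by induction on $e$, adding the general points of $S\subset H$ one at a time and observing that a general point of the integral divisor $H$ imposes an independent condition unless every remaining section vanishes identically on $H$, in which case the residual sequence \eqref{eqp1} identifies the count with $h^0(X,\Ii_{\Res_H(W)}\otimes \Rr(-H))$. The only differences are presentational (you phrase the key step via evaluation maps on the torsion-free sheaf $\Ii_{W\cap H,H}\otimes \Rr_{|H}$, the paper via base loci) and the fact that your version records the slightly stronger conclusion that the bound is attained as an equality for general $S$.
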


\begin{proof}
The lemma is trivial if $e=0$ and hence we may assume $e>0$ and use induction on the integer $e$. We order the points $p_1,\dots ,p_e$ of $S$ and set $S':= S\setminus \{p_e\}$.
By the inductive assumption we have $h^0(X,\Ii_{W\cup S'}\otimes \Rr) \le \max \{\beta -e+1,h^0(X,\Ii_{\Res_H(W)}\otimes \Rr(-H))\}$. Since $h^0(X,\Ii_{W\cup S}\otimes \Rr) \le h^0(X,\Ii_{W\cup S'}\otimes \Rr)$,
we get the lemma if $h^0(X,\Ii_{W\cup S'}\otimes \Rr)\leq h^0(X,\Ii_{\Res_H(W)}\otimes \Rr(-H))$. Now assume $h^0(X,\Ii_{W\cup S'}\otimes \Rr)=\beta -e+1>h^0(X,\Ii_{\Res_H(W)}\otimes \Rr(-H))$. 
Since the lemma is true if $h^0(X,\Ii_{W\cup S}\otimes \Rr) <h^0(X,\Ii_{W\cup S'}\otimes \Rr)$, we may assume $h^0(X,\Ii_{W\cup S}\otimes \Rr) =h^0(X,\Ii_{W\cup S'}\otimes \Rr)$. Note that $\Res_H(W\cup S') =\Res_H(W)$.
Since we take $p_e$
general in $H$ after fixing $W\cup S'$, $H$ is contained in the base locus of $|\Ii_{W\cup S'}\otimes \Rr|$. Thus $h^0(X,\Ii_{W\cup S'}\otimes \Rr)=h^0(X,\Ii_{\Res_H(W)}\otimes \Rr(-H))$.
\end{proof}

\begin{remark}\label{lo3}
In the content of \Cref{lo2}, if we assume that $h^1(X,\Ii_W\otimes \Rr)=0$, then the following are true.
\begin{itemize}
\item If $h^0(X,\Ii_{\Res_H(W)}\otimes \Rr(-H))\le \beta -e$, we get $h^0(X,\Ii_{W\cup S}\otimes \Rr) =\beta -e$
and hence $h^1(X,\Ii_{W\cup S}\otimes \Rr) =0$.
\item If $h^0(X,\Ii_{\Res_H(W)}\otimes \Rr(-H))=0$ and $\beta <  e$, we get $h^0(X,\Ii_{W\cup S}\otimes \Rr) =0$.
\end{itemize}
\end{remark}

\section{Main result}\label{MainResult}
We remind that, as settled in \Cref{NotationYandX},  $Y$ is an integral projective variety of dimension $r$ embedded with a very ample line bundle $\Ll$ such that $h^1(Y, \Ll) = 0$, $\alpha:= h^0(Y,\Ll)$ and
 $X_m:= Y\times \PP^m$ with $m\geq 1$.

Establishing the regularity of $(X_2, \mathcal{L}[t])$ relies  on an inductive approach, requiring a foundational understanding of essential results concerning $X_1$.

\begin{lemma}[{\cite[Theorem 2]{b}}]\label{a4.1}
 Let $X_1= Y \times \mathbb P^1$, with $(Y,\mathcal{L})$ and $\alpha$ as  in \Cref{NotationYandX};  $\dim(Y)=r>1$, $\alpha >(\dim Y +1)^2$.  { If the pair $(Y, \Ll)$ is not $\lfloor \alpha/(r+1)\rfloor$-secant defective,} then the pair $(X_1,\Ll[t])$ is not secant defective for all $t\ge 2$.
\end{lemma}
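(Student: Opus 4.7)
The plan is an induction on $t\geq 2$ based on the Horace Differential Lemma for double points (\Cref{Differential Horace lemma}) applied with the divisor $H := Y\times\{\mathrm{pt}\}\cong Y$ of $X_1 = Y\times\PP^1$; under this identification $\Ll[t]|_H = \Ll$ and $\Ll[t](-H)=\Ll[t-1]$, so $H$ is precisely the ``slab'' one wants to peel off. By Terracini (\Cref{terracini}), the statement reduces to showing
\[
h^0(X_1,\Ii_{(2Z,X_1)}\otimes\Ll[t])\cdot h^1(X_1,\Ii_{(2Z,X_1)}\otimes\Ll[t]) = 0
\]
for every $z\geq 1$ and a general union $Z\subset X_1$ of $z$ double points. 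Setting $z_t := \lceil (t+1)\alpha/(r+2)\rceil$, it suffices to establish the vanishing at $z=z_t$ (for $h^0$) and at $z=z_t-1$ (for $h^1$); the remaining values of $z$ follow by monotonicity of $h^0$ and by a standard collision specialization for $h^1$.

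At each critical $z$, split the double points into a general union $W$ of $z-u$ double points of $X_1$ plus a further set $\widetilde Z$ of $u$ double points, which one specializes to be vertically graded at $H$. The Horace Differential Lemma then yields, for $i\in\{0,1\}$,
\[
h^i(X_1,\Ii_{(2Z,X_1)}\otimes\Ll[t]) \leq h^i\!\left(X_1,\Ii_{W\cup(2\widetilde S,H)}\otimes\Ll[t-1]\right) + h^i\!\left(H,\Ii_{\widetilde S,H}\otimes\Ll\right),
\]
where $\widetilde S\subset H$ is a general set of $u$ points. The trace side on $H\cong Y$ is immediate from the very ampleness of $\Ll$: general simple points of $Y$ impose independent conditions on $|\Ll|$, so $h^1(H,\Ii_{\widetilde S}\otimes\Ll)=0$ and $h^0(H,\Ii_{\widetilde S}\otimes\Ll)=\max(0,\alpha-u)$, which can be driven to zero by taking $u\geq \alpha$, or absorbed afterwards via \Cref{lo2}--\Cref{lo3}.

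For the residual, iterate the procedure: apply Horace again with a second slab $H'$ of $X_1$, converting from $\Ll[t-1]$ to $\Ll[t-2]$ and carrying the double points of $H$ inherited from the previous step along in the new $W$. Repeat until the twist drops to $\Ll[0]=\pi_1^\ast\Ll$, at which point \Cref{lol1} identifies the cohomology on $X_1$ with cohomology on $Y$, and the remaining zero-dimensional scheme pushes forward to a general union of double points of $Y$ together with a controlled number of simple points accumulated from the successive traces. The hypothesis that $(Y,\Ll)$ is not $\lfloor\alpha/(r+1)\rfloor$-secant defective then closes the argument, provided the total effective count of conditions stays within the non-defective range of $(Y,\Ll)$. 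The main obstacle is the numerical bookkeeping across the $t$ iterations: one must select the splitting parameters $u_1,\ldots,u_t$ so that each intermediate trace lands in its expected regime and the final projected scheme on $Y$ fits within the threshold $\lfloor\alpha/(r+1)\rfloor$. The assumption $\alpha>(r+1)^2$ is precisely what guarantees the slack needed for all these constraints to be compatible simultaneously, across every residue class of $(t+1)\alpha$ modulo $r+2$.
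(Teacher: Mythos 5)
First, note that the paper does not actually prove this lemma: it is imported verbatim from \cite[Theorem 2]{b}, so the only internal point of comparison is the technique the paper uses for its close analogues, above all \Cref{a1} (the $\Ll[1]$ case) and the inductive step of \Cref{a5}.

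Your plan has a structural gap in the Horace step, and it is fatal already in the base case. In your displayed inequality the trace on $H\cong Y$ consists \emph{only} of the $u$ simple points $\widetilde S$ produced by the differential trick; no component of $W$ is placed on $H$. The trace system $|\Ll|$ on $H$ can absorb $\alpha$ conditions, but each differential point contributes just $1$ condition to the trace while dumping a scheme $(2p,H)$ of degree $r+1$ into the residual, so the total residual degree drops only from $(r+2)z$ to $(r+2)z-u$. Since $h^0(X_1,\Ll[t])=(t+1)\alpha$ and $h^0(X_1,\Ll[t-1])=t\alpha$, making the residual fit (for the $h^1$ statement), or making the trace $h^0$ vanish (for the $h^0$ statement), forces $u\geq\alpha$ up to $O(1)$. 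But the total number of available double points is $z\approx (t+1)\alpha/(r+2)$, which is strictly less than $\alpha$ whenever $t\leq r$; in particular for $t=2$ and any $r\geq 2$ no admissible $u$ exists. This is not a matter of "slack" provided by $\alpha>(r+1)^2$, and no choice of the parameters $u_1,\dots,u_t$ repairs it. Your fallback of absorbing a nonzero trace $h^0$ "via \Cref{lo2}--\Cref{lo3}" also does not connect to anything: those lemmas absorb excess sections using general \emph{simple points of $H$ sitting in the residual}, and your construction produces no such points.

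The missing idea is the balanced exchange used in \cite{b} and in \Cref{a1}: at each slab one specializes roughly $\lfloor\alpha/(r+2)\rfloor$ \emph{full} double points $(2\overline S,X_1)$ onto $H$ (each contributing $(2p,H)$, degree $r+1$, to the trace and a single simple point to the residual) \emph{together with} roughly $\lfloor\alpha/(r+2)\rfloor$ differential points (degree $1$ to the trace, $(2p,H)$ to the residual). Then the trace $(2\overline S,H)\cup\widetilde S$ has degree $(r+2)\lfloor\alpha/(r+2)\rfloor\leq\alpha$ and is handled by the non-defectivity hypothesis on $(Y,\Ll)$, the residual acquires only about $\alpha$ worth of degree from these points (namely $\overline S\cup(2\widetilde S,H)$), and the leftover simple points $\overline S$ are exactly what \Cref{lo2}--\Cref{lo3} are designed to use. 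Your outline, as written, cannot be completed without this ingredient.
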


\begin{lemma}\label{a1} Let $X_1= Y \times \mathbb P^1$, with $(Y,\mathcal{L})$, $\alpha$ as in \Cref{NotationYandX} 
and  $\dim(Y)=r>1$.  If $Z\subset X_1$ is a general union of $z$ double points, then

\begin{enumerate}[label=(\alph*)]
    \item\label{firsta} 
    $h^1(X_1,\Ii_Z\otimes \Ll[1]) =0$ if $z \le 2\lfloor \alpha/(r+2)\rfloor$ { and $(Y, \Ll)$ is not $\lfloor \alpha/(r+2) \rfloor$-secant defective}. 
    \item\label{firstb}  $h^1(X_1,\Ii_Z\otimes \Ll[1]) =0$ if $(r+2)z \le 2\alpha -2r-2$ { and $(Y, \Ll)$ is not $\lfloor \alpha/(r+2) \rfloor$-secant defective}. 
    \item \label{firstc}  $h^0(X_1,\Ii_Z\otimes \Ll[1]) =0$ if $z \ge 2\lceil \alpha/(r+2)\rceil$ { and $(Y, \Ll)$ is not $\lceil \alpha/(r+2) \rceil$-secant defective}. 
    \item \label {firstd} $h^0(X_1,\Ii_Z\otimes \Ll[1]) =0$ if $(r+2)z \ge 2\alpha +2r+2$ { and $(Y, \Ll)$ is not $\lceil \alpha/(r+2) \rceil$-secant defective}. 
\end{enumerate}

\end{lemma}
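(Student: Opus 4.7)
The plan is to apply the Horace Differential Lemma \Cref{Differential Horace lemma} with $X=X_1$, divisor $H:=Y\times\{p\}$ for some $p\in\PP^1$, and line bundle $\Rr=\Ll[1]$, exploiting the identifications $\Ll[1]|_H\cong\Ll$ and $\Ll[1](-H)\cong\Ll[0]=\pi_1^\ast\Ll$. By the semicontinuity \Cref{specialization} it suffices, for each of (a)--(d), to exhibit a specific degeneration of the $z$ double points of $X_1$ for which the claimed cohomology vanishing holds.

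The device that extracts the sharp factor $(r+2)u\le\alpha$ (respectively $(r+2)u\ge\alpha$) is to mix two different specializations of double points onto the same divisor $H$. Split $z=u_1+u_2+v$ and degenerate $u_1$ double points of $X_1$ by moving their supports onto general points $\widetilde A\subset H$ in the ordinary way (so that the trace on $H$ is the double scheme $(2\widetilde A,H)$ and the residual is the reduced set $\widetilde A$); degenerate $u_2$ further double points toward general points $\widetilde S\subset H$ in a vertically graded manner with the $\PP^1$-direction perpendicular to $H$ (so that, by differential Horace, the trace is the reduced set $\widetilde S$ while the residual contributes the double scheme $(2\widetilde S,H)$); and leave the remaining $v$ double points generic in $X_1$. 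Applying \Cref{Differential Horace lemma} with $W$ equal to the union of the ordinarily specialized doubles and the $v$ generic ones gives
\[
h^i(X_1,\Ii_Z\otimes \Ll[1]) \;\le\; h^i(X_1,\Ii_R\otimes \Ll[0]) + h^i(H,\Ii_T\otimes \Ll),
\]
where $R\subset X_1$ is the union of $\widetilde A$, of $(2\widetilde S,H)$, and of the $v$ generic doubles, while $T\subset H\cong Y$ is the union of $(2\widetilde A,H)$ and $\widetilde S$. By \Cref{lol1} the residual term equals $h^i(Y,\Ii_{R'}\otimes\Ll)$ for a general union $R'$ of $u_1$ simple points and $u_2+v$ double points of $Y$, and the trace term is a cohomology computation on $Y\cong H$ of $u_1$ general doubles and $u_2$ general simples.

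For (a) and (b) the natural choice is $u_1=u_2=u:=\lfloor\alpha/(r+2)\rfloor$ with $v=z-2u$ (using a minor asymmetric adjustment $u_1=u_2\pm1$ for odd $z$ in (b)); the hypotheses $z\le 2u$ and $(r+2)z\le 2\alpha-2r-2$ both translate into the numerical inequalities $(r+2)u+(r+1)v\le\alpha$ (residual count) and $(r+2)u\le\alpha$ (trace count). Together with the non-defectivity of $(Y,\Ll)$ at level $u$, these inequalities force $h^1=0$ for both mixed-scheme $Y$-cohomologies, and hence $h^1(X_1,\Ii_Z\otimes\Ll[1])=0$. For (c) and (d) the dual choice $u:=\lceil\alpha/(r+2)\rceil$ yields $(r+2)u\ge\alpha$ on both sides, which forces both $h^0$ terms on the right to vanish and therefore $h^0(X_1,\Ii_Z\otimes\Ll[1])=0$.

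The principal technical obstacle is the mixed-scheme cohomology computation on $Y$: the hypothesis provides non-defectivity only for pure unions of double points, whereas the Horace inequality produces unions of doubles together with extra simple points. This is resolved by first applying non-defectivity to the $u_2+v$ (respectively $u_1$) doubles and then invoking \Cref{lo2} together with \Cref{lo3} to adjoin the $u_1$ (respectively $u_2$) general simples, using that simple points impose independent conditions on a sheaf whose $h^1$ already vanishes. A lesser bookkeeping point is checking that the simultaneous ordinary and differential specializations along $H$ do not interfere, which is ensured by placing $\widetilde A$ and $\widetilde S$ at disjoint general points of $H$.
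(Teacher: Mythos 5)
Your proposal is correct and follows essentially the same route as the paper: specialize half of the double points onto $H=Y\times\{o\}$, treat the other half via \Cref{Differential Horace lemma}, and observe that both the trace and the residual become degree-$(r+2)u$ unions of $u$ double and $u$ simple general points of $Y$, handled by the non-defectivity hypothesis together with \Cref{lol1} and \Cref{lo2}/\Cref{lo3}. The only cosmetic differences are that the paper reduces immediately to the extremal case $z=2\lfloor\alpha/(r+2)\rfloor$ (so your leftover parameter $v$ is $0$ and never negative) and deduces (b) from (a) rather than re-running the argument with an asymmetric split.
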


\begin{proof}
Note that \cref{firsta} implies \cref{firstb}. To prove \cref{firsta} we may assume $z = 2\lfloor \alpha/(r+2)\rfloor$.
Fix $o\in \PP^1$ and set $H:= Y\times \{o\}\in |\Oo_{X_1}[1]|$. Take a general $\overline{S}\cup \widetilde{S}\subset H$ such that $\#\overline{S} =\#\widetilde{S}= \lfloor \alpha/(r+2)\rfloor$. Decompose the support of $Z$ into the disjoint union of two sets $\overline{Z} \cup \widetilde{Z}$ where $\#\overline{Z} = \# \widetilde{Z}  =\lfloor \alpha/(r+2)\rfloor$. In order to show that $h^1(X_1,\Ii_Z\otimes \Ll[1]) = 0$ it is sufficient by semicontinuity to show that
$h^1(X_1,\Ii_{(2\overline{S},X_1) \cup (2\widetilde{Z},X_1)} \otimes \Ll[1]) = 0$. 

Note that $\deg(\overline{S} \cup (2\widetilde{S},H)) = \deg((2\overline{S},H) \cup \widetilde{S})  = (r+2)\lfloor \alpha/(r+2)\rfloor \le \alpha$. Since {$(Y,\Ll)$ is not $\lfloor \alpha/(r+2)\rfloor$-secant defective} and $\overline{S}\cup \widetilde{S} \subset H$ {is general} we have
\[
h^1(X_1,\Ii_{\overline{S} \cup (2\widetilde{S},H)}\otimes \Ll[0]) = h^1(H,\Ii_{(2\overline{S},H) \cup \widetilde{S}}\otimes \Ll[1]_{|H}) = 0.  
\]
Therefore, by Horace Differential Lemma for double points (\Cref{Differential Horace lemma}) applied with $W = (2\overline{S},X_1)$ and $\Rr = \Ll[1]$ we get $h^1(X_1,\Ii_{(2\overline{S},X_1) \cup (2\widetilde{Z},X_1)} \otimes \Ll[1]) = 0$ which finishes the proof of \cref{firsta}.

The proof of \cref{firstc} and hence of \cref{firstd} is similar reversing the inequalities.  
\end{proof}

\begin{lemma}\label{a3}
Let $X_2=Y \times \mathbb{P}^2$ with $(Y, \Ll)$, 
 $r$ and $\alpha$ as in \Cref{NotationYandX}. Let $L$ be a line in $\PP^2$ and $Z \subset X_2$ be a general union of $z$ double points of $X_2$ and $u$ double points of $H:= Y \times { L}$. 
\begin{enumerate}[label=(\alph*)]
    \item\label{seconda} $h^1(X_2,\Ii_Z\otimes \Ll[1]) =0$ if $(r+2)z\le 2\alpha - 2r-2$, $(r+2)z+u \leq 2\alpha$,  $u\le \lfloor \alpha/(r+1)\rfloor$, $z\le \alpha -(r+1)u$ { and $(Y, \Ll)$ is not-$s$-secant defective for all $s\in \{z,u, \lfloor \alpha/(r+2)\rfloor\}$}.
    \item \label{secondb}$h^0(X_2,\Ii_Z\otimes \Ll[1]) =0$ if { $(Y, \Ll)$ is not-$s$-secant defective for  $s\in \{u, \lceil \alpha/(r+2)\rceil\}$, } $(r+2)z\ge 2\alpha + 2r + 2$, 
    and either $u\ge \lceil \alpha/(r+1)\rceil$ or $z\geq \alpha - (r+1)u.$
\end{enumerate}

\end{lemma}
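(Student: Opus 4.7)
My plan is to combine a semicontinuity step with the Horace Differential Lemma for double points (\Cref{Differential Horace lemma}). Via \Cref{specialization}, I will pre-specialize the supports of $z_0$ of the $z$ double points of $X_2$ so that they lie on $H$, and then apply the differential Horace Lemma to specialize the remaining $z-z_0$ double points onto $H$ as well. Since $\Res_H((2p, X_2)) = \{p\}$ and $\Tr_H((2p, X_2)) = (2p, H)$ for $p \in H$, writing $\widetilde{Z}_1 \subset H$ for the pre-specialized supports and $\widetilde{S} \subset H$ for the general set of size $z-z_0$ produced by differential Horace, the resulting inequality is, for $i \in \{0,1\}$,
\[
h^i(X_2, \mathcal{I}_Z \otimes \mathcal{L}[1]) \le h^i\bigl(X_2, \mathcal{I}_{\widetilde{Z}_1 \cup (2\widetilde{S}, H)} \otimes \mathcal{L}[0]\bigr) + h^i\bigl(X_1, \mathcal{I}_{(2\widetilde{Z}_1, X_1) \cup (2S, X_1) \cup \widetilde{S}} \otimes \mathcal{L}[1]\bigr).
\]

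For part (a), my aim is to choose $z_0$ so that both right-hand $h^1$ vanish simultaneously. The residual term on $X_2$ collapses, via the residual sequence for $H$ (using the Künneth vanishing $h^i(\mathcal{L}[0](-H)) = 0$ for $i\le 1$, since $\mathcal{L}[0](-H) = \mathcal{L}\boxtimes\mathcal{O}_{\mathbb{P}^2}(-1)$ and $h^1(Y,\mathcal{L})=0$) together with \Cref{lol1}, to $h^1(Y, \mathcal{I}_{\pi_1(\widetilde{Z}_1) \cup 2\pi_1(\widetilde{S})} \otimes \mathcal{L})$, namely the cohomology of $z_0$ simple points and $z-z_0$ double points on $Y$; this vanishes under not-$(z-z_0)$-defectivity of $(Y,\mathcal{L})$ whenever $z_0 + (r+1)(z-z_0) \le \alpha$. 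The trace term on $X_1$ carries $(z_0+u)$ double points and $z-z_0$ simple points; Lemma \ref{a1}(a), applied to the $(z_0+u)$ doubles using (iii) and the not-$\lfloor\alpha/(r+2)\rfloor$-defectivity in (v), combined with \Cref{lo3} to absorb the simples, delivers the required vanishing. To make the not-$(z-z_0)$-defectivity available from hypothesis (v), I expect to force $z - z_0 \in \{z, u, \lfloor\alpha/(r+2)\rfloor\}$; the interval-arithmetic exhibiting such a $z_0$ is precisely what the four hypotheses (i)--(iv) are designed to support, with (iv) bounding $z$, (iii) bounding $u$, (i) matching Lemma \ref{a1}(b)'s $X_1$-condition, and (ii) controlling the degree on $X_1$.

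For part (b), the dual argument targets $h^0=0$: I replace Lemma \ref{a1}(a)/(b) by its (c)/(d) counterparts and use the ``too many points'' form of \Cref{lo3}. The condition $(r+2)z \ge 2\alpha+2r+2$ is precisely Lemma \ref{a1}(d)'s hypothesis for vanishing of $h^0$ on $X_1$, while the disjunction $u\ge\lceil\alpha/(r+1)\rceil$ or $z\ge \alpha-(r+1)u$ selects which of two complementary ``too many points'' conditions forces $h^0 = 0$ on $Y$ after the residual reduction. The main technical obstacle in both parts is the pigeon-hole argument exhibiting a valid integer $z_0$ satisfying every numerical inequality simultaneously, which requires each of the hypotheses to enter in a tight balance; I anticipate that the worst case is when one of the inequalities in (i)--(iv) is nearly sharp, and verifying the match with the not-$s$-defectivity values $s\in\{z,u,\lfloor\alpha/(r+2)\rfloor\}$ (or $\lceil\alpha/(r+2)\rceil$ in part (b)) from (v) is where the delicate casework lies.
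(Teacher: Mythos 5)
Your reduction applies the differential Horace lemma to the divisor $H$ itself, and this produces a residual term that cannot vanish; this is a genuine gap in part (a). When you differentiate $z-z_0$ double points of $X_2$ with respect to $H$, the virtual residues are the schemes $(2p,H)$ for $p\in\widetilde S$: double points \emph{of $H$}, each of degree $r+2$ because $\dim H=r+1$. They land in the term $h^1(X_2,\Ii_{\widetilde Z_1\cup(2\widetilde S,H)}\otimes\Ll[0])$, where $\Ll[0]=\pi_1^*\Ll$ is pulled back from $Y$, so its sections are constant along the $\PP^2$-fibres. In local coordinates $(y_1,\dots,y_r,\ell)$ at $p$ a pulled-back section vanishes on $(2p,H)$ if and only if it vanishes on $(2\pi_1(p),Y)$, so $(2p,H)$ imposes at most $r+1$ conditions on $|\Ll[0]|$ while having degree $r+2$; hence $h^1(X_2,\Ii_{\widetilde Z_1\cup(2\widetilde S,H)}\otimes\Ll[0])\ge z-z_0$. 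Your identification of this group with $h^1(Y,\Ii_{\pi_1(\widetilde Z_1)\cup 2\pi_1(\widetilde S)}\otimes\Ll)$ is correct only at the level of $h^0$, which is all that \Cref{lol1} asserts. Avoiding differentiation by taking $z_0=z$ does not help: the trace then carries $z+u$ double points of $H$ of total degree $(r+2)(z+u)$, which exceeds $h^0(H,\Ll[1]_{|H})=2\alpha$ in much of the admissible range (for instance $r=2$, $\alpha=100$, $u=20$, $z=40$ satisfies all four numerical hypotheses of part (a), yet $(r+2)(z+u)=240>200$, forcing $h^1>0$ on the trace).

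This obstruction is precisely what the paper's Simultaneous Horace Differential Lemma (\Cref{rmk:simultaneous_Horace}) is built to remove, and it is the step your plan is missing. The paper introduces a second divisor $H'=Y\times L'$, specializes all $z$ double points of $X_2$ onto $H'$ as ordinary (non-differentiated) double points, and instead differentiates the $u$ double points \emph{of $H$} with respect to $H'$; their virtual residues then land on $E=H\cap H'=Y\times\{o\}$ as schemes $(2p,E)$ of degree $r+1$, which map isomorphically onto double points of $Y$ under $\pi_1$. The residual term thus becomes $z$ simple points plus $u$ genuine double points of $Y$ in $|\Ll[0]|$ and vanishes under hypotheses (iv) and (v), while the trace on $H'$ is $z$ double points plus $u$ simple points, handled exactly as you propose via \Cref{a1} and \Cref{lo3}. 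So your treatment of the trace and the overall numerology are in the right spirit, and in part (b) the degree mismatch is harmless since only $h^0$ of the residual is needed there; but as written the argument for part (a) does not close.
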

\begin{proof}
Fix a line $L'$ in $\PP^2$ different from $L$ and let $o$ be the point of intersection of these lines. Call $E:=Y \times \{o\}$.
Let $\widetilde{Z}$ be the union of the supports of the $u$ double points of $H$ that are connected components of $Z$. Fix a general $\widetilde{S}\subset E$ such that $\#\widetilde{S}=u$ and a general $\overline{S}\subset H'$
such that $\#\overline{S}=z$. We specialize the $z$ double points of $Z$ to $(2\overline{S},X_2)$. As recalled in \Cref{specialization} if we can prove that such a specialization is regular then we will have that also $Z$ is regular. So, in order to prove \cref{seconda} now we need to show that $h^1(X_2,\mathcal{I}_{(2\overline{S},X_2) \cup (2\widetilde{Z},H)}\otimes \mathcal{L}[1])=0$ in the right range. 
It follows from \Cref{rmk:simultaneous_Horace} applied with $W = (2\overline{S},X_2)$ and $\mathcal{R} = \Ll[1]$ that it is sufficient to prove that $h^1(H',\Ii_{(2\overline{S},H')\cup \widetilde{S}}\otimes \Ll[1]_{|H'}) = 0$ and $h^1(X_2,\Ii_{\overline{S} \cup (2\widetilde{S},E) }\otimes \Ll[0])=0$.

{\hangindent=1cm

\begin{claim}\label{claim6}
   $h^1(H',\Ii_{(2\overline{S},H')\cup \widetilde{S}}\otimes \Ll[1]_{|H'}) = 0$.
\end{claim}
\begin{proof}[Proof of \Cref{claim6}]
By \Cref{a1} we have $h^1(H',\Ii_{(2\overline{S},H')}\otimes \Ll[1]_{|H'})=0$. Therefore, 
\[
h^0(H', \mathcal{I}_{(2\overline{S}, H')}\otimes \Ll[1]_{|H'}) = 2\alpha - (r+2)z \geq u.
\]
We have $h^0(H',\Ii_{\mathrm{Res}_{E}(2\overline{S}, H')}\otimes \Ll[0]_{|H'})=h^0(H', \mathcal{I}_{(2\overline{S}, H')} \otimes \Ll[0]_{|H'})$ which is equal to $\max \{0, \alpha - (r+1)z\}$ by \Cref{lol1}. 
By \Cref{lo3} in order to prove the claim it is sufficient to show that
\[
\alpha - (r+1)z \leq 2\alpha - (r+2)z  - u.
\]    
This is equivalent to $z+u \leq \alpha$ which follows from the inequalities in the statement of this \Cref{a3}.
\end{proof}
}

Now it is sufficient to show that $h^1(X_2,\Ii_{\overline{S}\cup (2\widetilde{S},E)}\otimes \Ll[0])=0$.
By \Cref{lol1} we have $h^1(X_2,\Ii_{(2\widetilde{S},E)}\otimes \Ll[0]) =0$, because $(Y,\Ll)$ is not-$u$-secant defective and $u\le \lfloor \alpha/(r+1) \rfloor$. Note that $h^0(X_2,\Ii_{(2\widetilde{S},E)}\otimes \Ll[0]) = \alpha -(r+1)u\ge z$. Since $\overline{S}$ is general in $H'$ and $h^0(X_2,\Ii_{(2\widetilde{S},E)}\otimes \Ll[0]) \ge z$, by \Cref{lo3} to prove  \cref{seconda} it is sufficient to use that $h^0(X_2,\Ll[-1])=0$.

The proof of \cref{secondb} is similar and omitted. \end{proof}

\begin{lemma}\label{a4old} Let $X_2=Y\times \mathbb P^2$ where $(Y, \mathcal{L})$, $r$ and $\alpha$ are as in \Cref{NotationYandX}. { If the pair $(Y, \Ll)$ is not secant defective, then} for all the following values of $r$ and $\alpha$ we have that the pair $(X_2,\mathcal{L}[2])$ is not secant defective.    \begin{itemize}
        \item $r=2$ and $\alpha \geq 71$, 
        \item $r=3$ and $\alpha \geq 75$, 
        \item $r=4$ and $\alpha \geq 99$, 
        \item $r=5$ and $\alpha \geq 138$, 
        \item $r=6$ and $\alpha \geq 183$, 
        \item $r=7$ and $\alpha \geq 234$, 
        \item $r\geq 8$ and $\alpha \geq \frac{1}{81} (27r^3+144r^2+210r+79)$. 
    \end{itemize}
\end{lemma}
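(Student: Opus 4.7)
The plan is to apply the Horace Differential Lemma (\Cref{Differential Horace lemma}) together with a specialization of some supports of $Z$ onto a divisor $H=Y\times L$, so as to reduce the problem for $(X_2,\Ll[2])$ to the previously established cases of $(X_2,\Ll[1])$ (\Cref{a3}) and $(X_1,\Ll[2])$ (\Cref{a4.1}). By semicontinuity and monotonicity in the number of double points, non-secant-defectivity of $(X_2,\Ll[2])$ reduces to the two vanishings $h^1(X_2,\Ii_Z\otimes\Ll[2])=0$ at $z=\lfloor 6\alpha/(r+3)\rfloor$ (claim (1)) and $h^0(X_2,\Ii_Z\otimes\Ll[2])=0$ at $z=\lceil 6\alpha/(r+3)\rceil$ (claim (2)); when $(r+3)\mid 6\alpha$ they coincide and (1) implies (2).

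Fix a line $L\subset\PP^2$, let $H:=Y\times L$, and split the $z$ supports of $Z$ into three disjoint pieces of cardinalities $a$, $b$, $c$ with $a+b+c=z$: an \emph{untouched} piece $A\subset X_2$ general, a \emph{Horace} piece $B\subset X_2$ general, and a \emph{specialized} piece $C\subset H$ obtained by semi-continuously moving $c$ general supports onto $H$. Since $\Res_H(2p,X_2)=\{p\}$ and $\Tr_H(2p,X_2)=(2p,H)$ for $p\in H$, applying \Cref{Differential Horace lemma} with $W=(2A,X_2)\cup(2C,X_2)$ and the $b$ double points of $B$ gives, for $i\in\{0,1\}$,
\[
h^i\bigl(X_2,\Ii_Z\otimes\Ll[2]\bigr) \le h^i\bigl(X_2,\Ii_{(2A,X_2)\cup C\cup(2\widetilde{S},H)}\otimes\Ll[1]\bigr) + h^i\bigl(H,\Ii_{(2C,H)\cup\widetilde{S}}\otimes\Ll[2]|_H\bigr),
\]
where $\widetilde{S}\subset H$ is a general set of size $b$.

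For the trace term, $H\cong X_1$ and $\Ll[2]|_H=\Ll\boxtimes\Oo_{\PP^1}(2)$; \Cref{a4.1} (whose hypothesis $\alpha>(r+1)^2$ is implied by our bounds on $\alpha$) yields non-defectivity of $(X_1,\Ll[2])$, so the $c$ double points together with the $b$ simple points either impose independent conditions ($c(r+2)+b\le 3\alpha$, for (1)) or fill the section space ($c(r+2)+b\ge 3\alpha$, for (2)). For the residual term, \Cref{a3}(a) applied with its $z$ equal to $a$ and its $u$ equal to $b$ gives $h^1=0$ on the double-point part, and the $c$ simple points on $H$ are then absorbed by \Cref{lo3}: its first bullet gives (1) and its second bullet gives (2). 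Using that $(Y,\Ll)$ is not $a$-secant defective to compute the $h^0$ of the residual modulo $H$, both variants reduce to one further linear inequality among $a,b,c,\alpha$.

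The whole proof thus boils down to producing, for each admissible $(r,\alpha)$, non-negative integers $(a,b,c)$ with $a+b+c=z$ satisfying the combined system of linear inequalities coming from the hypotheses of \Cref{a3}(a), the trace condition, and the appropriate variant of \Cref{lo3}. This numerical feasibility is the main technical obstacle and is what produces the specific thresholds on $\alpha$ in the statement; the cubic bound for $r\ge 8$ arises asymptotically from this optimisation. We expect the detailed, case-by-case verification of the inequalities to be postponed to \Cref{Section:claims}.
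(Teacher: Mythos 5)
Your proposal follows essentially the same route as the paper's proof: there the $z$ supports are likewise split into a piece of size $x_1$ specialized onto $H$ (your $C$), a piece of size $y_1$ handled by the differential trick (your $B$), and an untouched general remainder (your $A$); the trace on $H\cong X_1$ is arranged to have degree exactly $3\alpha$ and is killed by \Cref{a4.1}, while the residual is handled by \Cref{a3}\cref{seconda} together with \Cref{lo3}, exactly as you describe. The numerical feasibility you defer is precisely the content of the paper's \Cref{claim1} and \Cref{claim2} (proved in \Cref{Section:claims}), and it is indeed the step that produces the specific thresholds on $\alpha$ in the statement.
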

\begin{proof} Let $Z\subset X_2$ be a general union of $z$ double points
with  $z\in \{\lfloor 6\alpha/(r+3)\rfloor, \lceil 6\alpha/(r+3)\rceil\}$. Set $j=1$ if $z = \lfloor 6\alpha/(r+3)\rfloor$ and $j=0$, otherwise.   In order to obtain that $(X_2,\mathcal{L}[2])$ is not secant defective it is sufficient to show that $h^j(X_2,\mathcal{I}_Z\otimes \mathcal{L}[2]) = 0$.
We will obtain this vanishing of cohomology by applying the Horace Differential Lemma to a suitable specialization of $Z$ that we now describe.
First of all consider a divisor  $H=Y\times \mathbb P^1$ of $X_2$ as in \Cref{NotationYandX}. Write $Z= Z' \cup (2 \overline{Z},X_2) \cup (2 \widetilde{Z},X_2)$ with $\#\overline{Z}=x_1$ and $\#\widetilde{Z}=y_1$ for some integers $x_1,y_1$ (we first finish to explain the construction and we will specify  the precise values of $x_1$ and $y_1$ later in \Cref{claim1}). Then  specialize $\overline Z$ on $H$ and  call it $\overline S$. Consider $\widetilde S \subset H$ a set of general $y_1$ points disjoint from $\overline S$. Now, in order to apply the Horace Differential Lemma for double points (\Cref{Differential Horace lemma}) we need $x_1$ and $y_1$ to be properly chosen.

{\hangindent=1cm
\begin{claim}\label{claim1}
    There exist non-negative integers $x_1$ and $y_1$ such that $(r+2)x_1+y_1 = 3\alpha$, $x_1+y_1 \leq z$,  $2r+2 \leq y_1\leq \lfloor \alpha/(r+1) \rfloor$, $(r+2)(z-x_1-y_1) + y_1\leq 2\alpha$, $z-x_1-y_1 \leq \alpha - (r+1)y_1$.
\end{claim}
}
The proof of this claim is computational only and it is collected in \Cref{Section:claims}. However it is worth to remark that the statement of \Cref{claim1} is a consequence of a common solutions for the inequalities \cref{eqcon1}, \cref{eqcon2}, \cref{eqcon3} and \cref{eqcon4}. 
Remark that for $r=2,3$, \Cref{claim1} holds also for values of $\alpha$ smaller than $71$ and $75$ (resp.) of the statement of the present Lemma, but the inequality~\cref{eq8}  used in the proof of \Cref{claim3} holds only for $\alpha\geq71,75$ (resp.). 
By semicontinuity, in order to show that $h^j(X_2,\mathcal{I}_Z \otimes\mathcal{L}[2]) = 0$, it is sufficient to show that $h^j(X_2,\mathcal{I}_{Z'\cup (2\overline{S}, X_2) \cup (2\widetilde{Z}, X_2)}\otimes \mathcal{L}[2]) = 0$.

We have
\[
\deg ((2\overline{S},H)\cup \widetilde{S}) = (r+2)x_1 + y_1 = 3\alpha = h^0(H, \mathcal{L}[2]_{|H}).
\]
By \Cref{a4.1} ({\cite[Theorem 2]{b}}) the pair $(H, \mathcal{L}[2]_{|H})$ is not secant defective, so 
\[
h^i(H, \mathcal{I}_{(\Tr_H(Z'\cup (2\overline{S}, X_2))) \cup \widetilde{S}} \otimes \mathcal{L}[2]_{|H}) = h^i(H, \mathcal{I}_{(2\overline{S},H)\cup \widetilde{S}} \otimes \mathcal{L}[2]_{|H}) = 0 \text{ for }i=0,1.
\]
Thus, by the Horace Differential Lemma for double points (\Cref{Differential Horace lemma}) applied with $W=Z'\cup (2\overline{S},X_2)$ and $\mathcal{R}=\mathcal{L}[2]$ in order to show that 
$h^j(X_2,\mathcal{I}_{Z'\cup (2\overline{S}, X_2)\cup (2\widetilde{Z},X_2)}\otimes \mathcal{L}[2]) = 0$, it is sufficient to prove that 
$h^j(X_2,\mathcal{I}_{\Res_H(Z'\cup (2\overline{S}, X_2)) \cup (2\widetilde{S},H)}\otimes \mathcal{L}[1]) = 0$.
Observe that $\Res_H(Z'\cup (2\overline{S}, X_2)) = Z' \cup \overline{S}$.
We have 
\begin{equation}\label{eq4}
h^1(X_2,\mathcal{I}_{Z'\cup (2\widetilde{S}, H)} \otimes \mathcal{L}[1]) = 0    
\end{equation}
as a consequence of \Cref{claim1} and \Cref{a3} applied with $u:=y_1$ and $z-x_1-y_1$ instead of $z$.
\begin{claim}\label{claim2}
    $z-x_1-y_1 \geq \lceil \alpha/(r+1) \rceil$.
\end{claim}
The computational proof of this claim is presented in \Cref{Section:claims}.

By \cref{eq4} we have $h^0(X_2,\mathcal{I}_{Z'\cup (2\widetilde{S}, H)} \otimes \mathcal{L}[1] ) = 3\alpha - (z-x_1-y_1)(r+3) - (r+2)y_1 = 3\alpha - (r+3)z + ((r+2)x_1+y_1) +  x_1 = x_1 + 6\alpha - (r+3)z$. If $j=1$ we get
$h^0(X_2,\mathcal{I}_{Z'\cup (2\widetilde{S}, H)} \otimes \mathcal{L}[1] ) \geq x_1$ and if $j=0$ we get $h^0(X_2,\mathcal{I}_{Z'\cup (2\widetilde{S}, H)} \otimes \mathcal{L}[1] ) < x_1$.
It follows from \Cref{lo3} that in order to get $h^j(X_2,\mathcal{I}_{Z'\cup (2\widetilde{S}, H)\cup \overline{S}} \otimes \mathcal{L}[1]) = 0$ it is sufficient to have
$h^0(X_2,\mathcal{I}_{Z'}\otimes \mathcal{L}[0]) = 0$. This is true by \Cref{claim2} and \Cref{lol1}.
\end{proof}

The bound of the previous lemma can be improved, as already remarked inside the proof, via a computational check. We preferred to write a separate statement to not make the previous proof too cumbersome.

\begin{lemma}\label{a4}
Let $X_2=Y\times \mathbb P^2$ where $(Y, \mathcal{L})$, $r$ and $\alpha$ are as in \Cref{NotationYandX}.  If the pair $(Y, \Ll)$ is not secant defective, then for all the following values of $r$ and $\alpha$ we have that the pair $(X_2,\mathcal{L}[2])$ is not secant defective.    \begin{itemize}
        \item $r=2,3$ and $\alpha \geq 60$, 
        \item $r=4$ and $\alpha \geq 98$, 
        \item $r=5$ and $\alpha \geq 133$, 
        \item $r=6$ and $\alpha \geq 176$, 
        \item $r=7$ and $\alpha \geq 231$, 
        \item $r\geq 8$ and $\alpha \geq \frac{1}{81} (27r^3+144r^2+210r+79)$. 
    \end{itemize}
\end{lemma}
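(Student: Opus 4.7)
The plan is to run the proof of \Cref{a4old} verbatim with the same degeneration. I would take $Z$ a general union of $z\in\{\lfloor 6\alpha/(r+3)\rfloor,\lceil 6\alpha/(r+3)\rceil\}$ double points of $X_2$, fix $H=Y\times\PP^1$, decompose $Z=Z'\cup(2\overline{Z},X_2)\cup(2\widetilde{Z},X_2)$ with $\#\overline{Z}=x_1$ and $\#\widetilde{Z}=y_1$, specialize $\overline{Z}$ onto $H$ as a set $\overline{S}\subset H$, and apply the Horace Differential Lemma for double points (\Cref{Differential Horace lemma}) with $W=Z'\cup(2\overline{S},X_2)$ and $\mathcal{R}=\mathcal{L}[2]$. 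The trace on $H$ is handled by \Cref{a4.1}, the residual scheme on $X_2$ is split via \Cref{a3}, and the set $\overline{S}$ is absorbed by \Cref{lo3}. Exactly as in \Cref{a4old}, this chain of reductions succeeds as soon as three conditions hold at the pair $(r,\alpha)$: the existence of non-negative integers $(x_1,y_1)$ satisfying the system of \Cref{claim1}, the inequality of \Cref{claim2}, and the auxiliary inequality denoted \eqref{eq8} that appears in the closed-form computations at the end of the proof of \Cref{a4old}.

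In \Cref{a4old} all three conditions were verified by closed-form estimates, which forced the larger thresholds $71, 75, 99, 138, 183, 234$. To reach the improved thresholds $60, 60, 98, 133, 176, 231$ stated here I would replace those estimates by a direct finite check. For each fixed $r\in\{2,\ldots,7\}$ the relation $(r+2)x_1+y_1=3\alpha$ expresses $x_1$ in terms of $y_1$, so the search for $(x_1,y_1)$ reduces to a one-parameter scan of $y_1$ in the range $[2r+2,\lfloor\alpha/(r+1)\rfloor]$, performed separately for the two candidate values of $z$ and for the residue classes of $3\alpha$ modulo $r+2$ that make $x_1$ an integer. In each such case it suffices to exhibit one admissible $y_1$ satisfying all three conditions simultaneously. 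For $r\geq 8$ the threshold $\alpha\geq \frac{1}{81}(27r^3+144r^2+210r+79)$ already coincides with the one in \Cref{a4old}, so no new verification is required.

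The main obstacle is bookkeeping rather than mathematics: organizing the finite arithmetic check so as to cover every residue class of $\alpha$ uniformly, and propagating a single admissible $(x_1,y_1)$ to all larger $\alpha$ within each class. Concretely I would first verify the three conditions by direct arithmetic at the extremal values $\alpha\in\{60,98,133,176,231\}$ together with the corresponding minimal choices of $z$ and $y_1$, and then extend to larger $\alpha$ in the same residue class by a monotonicity argument: increasing $\alpha$ while keeping $y_1$ proportionally close to $\alpha/(r+1)$ preserves the inequalities of \Cref{claim1} and \Cref{claim2} while only weakening \eqref{eq8}. Everything outside this finite verification is already supplied by the proof of \Cref{a4old}.
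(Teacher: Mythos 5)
Your proposal matches the paper's proof: Lemma \ref{a4} is obtained exactly by rerunning the argument of \Cref{a4old} and replacing the closed-form estimates by a direct finite computational verification of \Cref{claim1} and \Cref{claim2} for the finitely many values of $\alpha$ between the new and old thresholds for each $r\le 7$ (the inequality \eqref{eq8} you single out is only a sufficient closed-form bound inside the proof of \Cref{claim2}, so checking that claim directly already covers it). The paper's proof is just this observation stated in two sentences, so your plan is essentially the same approach.
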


\begin{proof} The present Lemma is an improvement of the bounds on the $\alpha$'s of \Cref{a4old}. The previous bounds were based on \Cref{claim1}  and \Cref{claim2} which one can computationally check that hold for all the finitely many values of $\alpha$ of the present Lemma not covered in \Cref{a4old}.
\end{proof}

\begin{lemma}\label{a5.0}
Let $(Y, \Ll)$, $X_2$ and $H$ be as in \Cref{NotationYandX} and $r$ and $\alpha$ be as in \Cref{a4}. Set $a:=  \lfloor 4\alpha/(r+2)\rfloor$, $b:= 4\alpha - (r+2)a$, $z:= \lceil\binom{5}{2}\alpha/(r+3)\rceil$ and $z':= z-a-b$. Let $Z'\subset X_2$ be a general union of $z'$ double points of $X_2$.
Let $B\subset H$ be a set of $b$ general points. If the pair $(Y, \Ll)$ is not secant defective, then $h^1(X_2,\Ii_{Z'\cup (2B,H)}\otimes \Ll[2]) =0$.
\end{lemma}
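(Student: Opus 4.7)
The plan is to apply the Horace Differential Lemma (\Cref{Differential Horace lemma}) with respect to $H$, reducing the $\Ll[2]$-problem on $X_2$ to a $\Ll[1]$-problem on $X_2$ (handled by \Cref{a3}(a) together with \Cref{lo3}) and a problem on $(H,\Ll[2]|_H) = (X_1,\Ll[2])$ (handled by \Cref{a4.1}), in the same spirit as the proof of \Cref{a4old}.

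First I would introduce non-negative integer parameters $x,y$ to be tuned at the end, and decompose $Z' = Z''' \sqcup (2\overline{Z},X_2) \sqcup (2\widetilde{Z},X_2)$ with $\#\overline{Z}=x$ and $\#\widetilde{Z}=y$. I specialize $\overline{Z}$ to a general $\overline{S}\subset H$ disjoint from $B$ and fix a general $\widetilde{S}\subset H$ with $\#\widetilde{S}=y$, disjoint from $\overline{S}\cup B$. By semicontinuity (\Cref{specialization}) it suffices to bound $h^1$ of the specialized scheme. Applying \Cref{Differential Horace lemma} with $W = Z''' \cup (2\overline{S},X_2) \cup (2B,H)$ and degenerating $(2\widetilde{Z},X_2)$ to $H$, and using $\Res_H((2\overline{S},X_2)) = \overline{S}$, $\Tr_H((2\overline{S},X_2)) = (2\overline{S},H)$, $\Res_H((2B,H)) = \emptyset$ and $\Tr_H((2B,H)) = (2B,H)$, would yield
\[
h^1(X_2,\Ii_{Z'\cup (2B,H)}\otimes \Ll[2]) \le h^1(X_2,\Ii_{Z'''\cup \overline{S}\cup (2\widetilde{S},H)}\otimes \Ll[1]) + h^1(H,\Ii_{(2\overline{S},H)\cup (2B,H)\cup \widetilde{S}}\otimes \Ll[2]|_H).
\]

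The term on $H$ would be handled by \Cref{a4.1}, since $(X_1,\Ll[2])$ is not secant defective: as the scheme has degree $(r+2)(x+b)+y$ and $h^0(H,\Ll[2]|_H)=3\alpha$, its $h^1$ vanishes provided $(r+2)(x+b)+y \le 3\alpha$. For the term on $X_2$, I would apply \Cref{a3}(a) with $z'-x-y$ double points of $X_2$ and $y$ double points of $H$ to obtain $h^1(X_2,\Ii_{Z'''\cup (2\widetilde{S},H)}\otimes \Ll[1])=0$, and then use \Cref{lo3} to incorporate the $x$ simple points $\overline{S}\subset H$. By \Cref{lol1} and the non-secant-defectiveness of $(Y,\Ll)$, one has $h^0(X_2,\Ii_{Z'''}\otimes \Ll[0])=\max\{0,\alpha-(r+1)(z'-x-y)\}$, and the hypothesis of \Cref{lo3} then reduces to either $(r+3)z'-(r+2)x-y\le 3\alpha$ or $2z'-x+ry\le 2\alpha$, depending on the sign of $\alpha-(r+1)(z'-x-y)$.

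The principal difficulty is the combinatorial claim that non-negative integers $x,y$ can be chosen to satisfy simultaneously the four inequalities of \Cref{a3}(a) (notably $y\le \lfloor\alpha/(r+1)\rfloor$ and $(r+2)(z'-x-y)+y\le 2\alpha$), the trace inequality $(r+2)(x+b)+y \le 3\alpha$, the \Cref{lo3}-inequality above, and the decomposition constraint $x+y \le z'$. I expect this to follow from an elementary case analysis analogous to \Cref{claim1} and \Cref{claim2} in \Cref{Section:claims}, exploiting the lower bounds on $\alpha$ from the hypothesis of \Cref{a4} to provide the necessary slack.
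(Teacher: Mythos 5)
Your proposal follows essentially the same route as the paper's proof: decompose $Z'$, specialize $x$ of its double points onto $H$, apply \Cref{Differential Horace lemma} with $W$ containing $(2B,H)$, kill the trace term via \Cref{a4.1} and the residual term via \Cref{a3} \cref{seconda} together with \Cref{lo3}. The combinatorial step you leave open is settled in the paper not by a fresh case analysis but by recycling \Cref{claim1}: with $z_1=\lfloor 6\alpha/(r+3)\rfloor$ and $(x_1,y_1)$ as there, one takes $y=y_1$ and $x=z'-z_1+x_1$, so that the residual problem becomes literally the one already handled in \Cref{a4old} and only the nonnegativity $x\ge 0$ and the trace bound $(r+2)(x+b)+y\le 3\alpha$ require new (short) computations.
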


\begin{proof}
Let $z_1 = \lfloor 6\alpha/(r+3)\rfloor$ and $x_1, y_1$ be as in Claim~\ref{claim1} with $z_1$ instead of $z$.
Let $\overline{z} = z'-z_1 + x_1$, $\widetilde{z} = y_1$ and $z'' = z_1-x_1-y_1$.

\begin{claim}\label{claim3}
$\overline{z} \geq 0$.  
\end{claim}

The proof of the above claim is given in \Cref{Section:claims}.

Choose two disjoint subsets $\overline{Z}$ and $\widetilde{Z}$ of the support of $Z'$ consisting of $\overline{z}$ and $\widetilde{z}$ points, respectively. Let $Z''$ be the union of the remaining $z''$ double points of $Z'$ that are supported outside $\overline{Z}\cup \widetilde{Z}$. Let $\overline{S}$ and $\widetilde{S}$ be general disjoint subsets of $H$ with $\#\overline{S} = \overline{z}$ and $\#\widetilde{S} = \widetilde{z}$. By semicontinuity, in order to show that $h^1(X_2, \Ii_{Z'\cup (2B,H)}\otimes\Ll[2]) = 0$ it is sufficient to establish the equality $h^1(X_2,\Ii_{Z''\cup (2\overline{S},X_2)\cup (2\widetilde{Z}, X_2)\cup (2B,H)}\otimes \Ll[2]) = 0$.

{\hangindent=1cm
\begin{claim}\label{claim4}
$h^1(H,\mathcal{I}_{(2\overline{S}, H)\cup \widetilde{S} \cup (2B, H)}, \mathcal{L}[2]_{|H}) = 0$.
\end{claim}
\begin{proof}[Proof of \Cref{claim4}]
By Lemma~\ref{a4.1} ({\cite[Theorem 2]{b}}) it is enough to show that $\deg ((2\overline{S}, H)\cup \widetilde{S} \cup (2B, H))\leq h^0(H,\mathcal{L}[2]_{|H}) = 3\alpha$.
We have 
\begin{align*}
&\deg ((2\overline{S}, H)\cup \widetilde{S} \cup (2B, H))  = (r+2)\overline{z} + \widetilde{z} + (r+2)b  \\
&= (r+2)(z'-z_1 + x_1) + y_1 + (r+2)b =(r+2)(z'-z_1+b) + (r+2)x_1 + y_1 \\
&=(r+2)(z'-z_1 + b) + 3\alpha 
\end{align*}
so it is sufficient to show that
\begin{equation}\label{eq7}
(r+3)(r+2)(z'-z_1 + b)\leq 0.
\end{equation}
We have
\[
(r+3)(r+2)(z'-z_1 + b) = (r+3)(r+2)(z-a) - (r+3)(r+2)z_1
\]
\[
 \leq (r+2)(10\alpha + (r+2)) - (r+3)(4\alpha - (r+1)) - (r+2)(6\alpha - (r+2)) = 3r^2+12r + 11 - 4\alpha
\]
which is negative by the lower bounds on $\alpha$ and $r$.
\end{proof}
}

{\hangindent=1cm
\begin{claim}\label{claim5}
$h^1(X_2,\mathcal{I}_{Z'' \cup \overline{S} \cup (2\widetilde{S}, H)}, \mathcal{L}[1]) = 0$.
\end{claim}
\begin{proof}[Proof of \Cref{claim5}]
The integers $z_1, x_1$ and $y_1$ satisfy the inequalities of \Cref{claim1} (with $z_1$ instead of $z$). Therefore, we have $h^1(X_2,\mathcal{I}_{Z''  \cup (2\widetilde{S}, H)}, \mathcal{L}[1]) = 0$ by \Cref{a3} \cref{seconda} applied with $z=z'' = z_1-x_1-y_1$ and $u = \widetilde{z} = y_1$. It follows that $h^0(X_2,\mathcal{I}_{Z''  \cup (2\widetilde{S}, H)}, \mathcal{L}[1]) = 3\alpha - (z_1-x_1-y_1)(r+3) - (r+2)y_1 = (x_1+6\alpha - (r+3)z_1)$
which is at least $x_1$. On the other hand we have $\overline{z} \leq \overline{z} + b = z'-z_1 + b + x_1 \leq x_1$ by \cref{eq7}.
It follows from \Cref{lo3} that to conclude the proof of the claim it is sufficient to show that $h^0(X_2, \mathcal{I}_{Z''}\otimes \mathcal{L}[0]) = 0$. This follows from \Cref{lol1} and \Cref{claim2} (with $z_1$ instead of $z$).
\end{proof}
}
By \Cref{claim4} and \Cref{claim5}, the lemma follows from Horace Differential Lemma for double points (\Cref{Differential Horace lemma}) applied with $W= Z''\cup (2\overline{S},X_2)\cup (2B,H)$ and $\Rr = \Ll[2]$.
\end{proof}

\begin{lemma}\label{a1.2}
Let $(Y, \Ll), X_2, r$ and $\alpha$ be as in {\Cref{a4}}. 
 If $Z$ is a general union of $z$ double points of $X_2$ {and the pair $(Y, \Ll)$ is not secant defective, then}  $h^0(X_2,\Ii_Z\otimes \Ll[1]) \le \max \{0,3\alpha -(r+2)z\}$. 
\end{lemma}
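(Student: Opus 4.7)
The plan is to use the Horace method via the divisor $H := Y \times \PP^1 \subset X_2$ (for some line $\PP^1 \subset \PP^2$). Given $Z$ a general union of $z$ double points of $X_2$ and an integer $z_0 \in \{0, \ldots, z\}$, we degenerate $Z$ to $Z_0 \cup Z_1$, where $Z_0$ is a general union of $z_0$ double points of $X_2$ supported on $H$ and $Z_1$ is a general union of $z_1 := z - z_0$ double points of $X_2$. Upper semicontinuity (\Cref{specialization}) lets us work with the specialization. The residual exact sequence for $H$, combined with $\Res_H(Z_0) = \operatorname{Supp}(Z_0)$, $\Tr_H(Z_0) = (2\operatorname{Supp}(Z_0), H)$, $\Res_H(Z_1) = Z_1$ and $\Tr_H(Z_1) = \emptyset$, yields
\[
h^0(X_2, \Ii_{Z_0 \cup Z_1} \otimes \Ll[1]) \le h^0(X_2, \Ii_{Z_1 \cup \operatorname{Supp}(Z_0)} \otimes \Ll[0]) + h^0(H, \Ii_{(2\operatorname{Supp}(Z_0), H)} \otimes \Ll[1]_{|H}).
\]

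The first summand is bounded by $\max\{0, \alpha - (r+1)z_1 - z_0\}$ via \Cref{lol1} applied to $\pi_1 : X_2 \to Y$ together with the non-secant-defectivity of $(Y, \Ll)$ (adding the $z_0$ general simple points to the $z_1$ double points drops $h^0$ by $z_0$ whenever $h^0>0$). The second summand is bounded by $\max\{0, 2\alpha - (r+2) z_0\}$ via \Cref{a1} applied to $(Y \times \PP^1, \Ll[1])$, provided $z_0 \le 2\lfloor \alpha/(r+2)\rfloor$ or $z_0 \ge 2\lceil \alpha/(r+2)\rceil$, the ranges where \Cref{a1} guarantees non-$z_0$-secant defectivity of the pair.

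The heart of the proof is a judicious choice of $z_0$ in terms of $z$. If $z \le 2\lfloor \alpha/(r+2)\rfloor$, take $z_0 = z$: both summands are non-negative and their sum equals $3\alpha - (r+3)z$, which is at most $3\alpha - (r+2)z$. If $z \ge 2\lceil \alpha/(r+2)\rceil$, take $z_0 = 2\lceil \alpha/(r+2)\rceil$: the second summand vanishes by \Cref{a1} part (c), and the first summand equal to $\max\{0, \alpha - (r+1)(z-z_0) - z_0\}$ can be checked to be at most $\max\{0, 3\alpha - (r+2)z\}$ whenever $\alpha \ge 2r(r+2)$, a condition comfortably ensured by the lower bounds on $\alpha$ inherited from \Cref{a4}. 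The single remaining integer value $z = 2\lfloor \alpha/(r+2)\rfloor + 1$ (which can occur only when $\alpha$ is not divisible by $r+2$) is handled separately with $z_0 = 2\lfloor \alpha/(r+2)\rfloor$ and $z_1 = 1$ by a short direct computation that goes through whenever $\lfloor \alpha/(r+2)\rfloor \ge 1$.

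I expect the main obstacle to be the arithmetic bookkeeping across the three regimes, particularly the case analysis of the numerical inequality in the large-$z$ regime where one has to ensure that the first summand, when positive, does not exceed $3\alpha - (r+2)z$. What makes the method succeed is the slack equal to $z$ between the claimed bound $3\alpha - (r+2)z$ and the sharper (but in general false) expected value $3\alpha - (r+3)z$: this extra room is precisely what absorbs the integer-rounding and edge-case discrepancies produced by the specialization.
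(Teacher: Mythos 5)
Your proposal is correct, but it takes a genuinely different route from the paper's proof. The paper splits $Z$ into three parts of sizes $z-a-b$, $a:=2\lfloor \alpha/(r+2)\rfloor$ and $b:=2\alpha-(r+2)a$, specializes the $a$ full double points onto $H$ and handles the remaining $b$ points with the Horace \emph{Differential} Lemma (\Cref{Differential Horace lemma}), so that the trace on $H$ has degree exactly $(r+2)a+b=2\alpha=h^0(H,\Ll[1]_{|H})$ and both of its cohomology groups vanish; the entire estimate is then carried by the residual in $\Ll[0]$, and in the two smaller ranges of $z$ the paper even gets $h^1=0$, hence the sharper value $3\alpha-(r+3)z$. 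You instead use only the ordinary residual sequence of $H$, specializing $z_0$ double points of $X_2$ onto $H$ and bounding trace and residual \emph{separately} --- the trace by \Cref{a1} applied to $H\cong Y\times\PP^1$, the residual by \Cref{lol1} together with the independence of general simple points --- with $z_0$ chosen per regime ($z_0=z$, $z_0=2\lceil\alpha/(r+2)\rceil$, or $z_0=2\lfloor\alpha/(r+2)\rfloor$ for the single gap value $z=2\lfloor\alpha/(r+2)\rfloor+1$). This is more elementary, since no differential Horace is needed for this lemma, but deliberately lossy: as you observe, the slack of size $z$ between the claimed bound $3\alpha-(r+2)z$ and the expected $3\alpha-(r+3)z$ is exactly what absorbs adding the two maxima instead of filling the trace exactly. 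I verified your numerics: the large-$z$ regime in fact only needs $\alpha\ge 2r(r+1)$ (your stated $\alpha\ge 2r(r+2)$ is a sufficient overshoot), and the gap case needs $\lfloor\alpha/(r+2)\rfloor\ge 1$; both follow immediately from the lower bounds on $\alpha$ in \Cref{a4}. Since both arguments consume the same inputs (\Cref{a1}, \Cref{lol1}, semicontinuity, the bounds of \Cref{a4}), neither is more general; yours trades the exact-fill bookkeeping of the differential method for a three-regime case analysis.
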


\begin{proof}
Let $H = Y\times \PP^1$ be a divisor of $X_2$ as in \Cref{NotationYandX}.
Write $Z$ as the following disjoint union $Z=(2\overline Z,X_2)\cup (2\widetilde{Z},X_2)\cup Z'$ {where $\overline Z$ and $\widetilde Z$ are disjoint subsets of the support of $Z$ whose cardinalities are specified below}. 
Take a general $\overline S\cup \widetilde{S}\subset H$ such that $\#\overline S= \#\overline Z$, $\#\widetilde{S}=\#\widetilde{Z}$ and $\overline S\cap \widetilde{S}=\emptyset$. By semicontinuity it is enough to show that $h^0(X_2,\mathcal{I}_{Z' \cup (2 \overline S, X_2) \cup (2 \widetilde{Z}, X_2)} \otimes \mathcal{L}[1])\le \max \{0,3\alpha -(r+2)z\}$. Let $$a:= 2\lfloor \alpha/(r+2)\rfloor \hbox{ and } b:=2\alpha-(r+2)a.$$

\begin{itemize}
    \item 
Assume for the moment $z\ge a+b$. Let $\# \overline Z=a$ and $\# \widetilde{Z}= b$. 
Note that $0\le b\le 2r+2$. 
\Cref{a1} applied to $(2\overline S,H)$ gives $h^1(H,\Ii_{(2\overline S,H)}\otimes \Ll[1]_{|H})=0$. Now if we add the general $b$ simple points of $\widetilde{S}$ to $(2\overline S,H)$ then we get also
 $h^i(H,\Ii_{(2\overline S,H)\cup \widetilde{S}}\otimes \Ll[1]_{|H})=0$ for $i=0,1$ since $\deg((2\overline S,H)\cup \widetilde{S})=(r+2)a + b= 2 \alpha = h^0(H,\Ll[1]_{|H})$.
By the Horace Differential Lemma for double points (\Cref{Differential Horace lemma}) applied with $W=Z' \cup (2 \overline S, X_2)$ and $\mathcal{R}= \mathcal{L}[1]$, we have $h^0(X_2,\Ii _{Z' \cup (2 \overline S, X_2) \cup (2 \widetilde{Z}, X_2)}\otimes \Ll[1]) \leq h^0(X_2,\Ii_{Z' \cup \overline S\cup (2\widetilde{S},H)}\otimes \Ll[0])$. Since the pair $(Y,\Ll)$ is not secant defective, $Z'$ is general in $X_2$ and $\overline S\cup \widetilde{S}$ is general in $H$, it follows from \Cref{lol1} that $h^0(X_2,\Ii_{Z' \cup \overline S\cup (2\widetilde{S},H)}\otimes \Ll[0])
 =\max \{0,\alpha -a -(r+1)(z-a-b) -(r+1)b\} = \max \{0, \alpha + ar - (r+1)z\} \leq \max\{0, \alpha + \alpha\frac{2r}{r+2} - (r+1)z\}$.  
Now since it is easy to check that 
 \[
 \max\Bigl\{0, \alpha + \alpha\frac{2r}{r+2} - (r+1)z\Bigr\} \leq \max\{0, 3\alpha - (r+2)z\}
 \]
we are done in the case $z\geq a+b$.

\item

Now assume $a\le z<a+b$. We take $Z'=\emptyset$,  $\# \overline Z=a= 2\lfloor \alpha/(r+2)\rfloor$ and $\# \widetilde{Z}= z-a$. 
 The same argument of the previous case 
leads to
\begin{equation}\label{eqa1.2.1}
    h^1(H, \Ii_{(2\overline S, H)\cup \widetilde{S}}\otimes \Ll[1]_{|H}) = 0.
\end{equation} 
Let $\beta = h^0(X_2,\Ii_{(2\widetilde{S}, H)}\otimes \Ll[0]).$ By \Cref{lol1} we have $\beta = \max\{0, \alpha - (r+1)(z-a)\} = \alpha - (r+1)(z-a)$. 
Therefore, $h^1(X_2,\Ii_{(2\widetilde{S}, H)}\otimes \Ll[0]) = 0$.
We have $h^0(X_2,\Ii_{\Res_{H}(2\widetilde{S}, H)} \otimes \Ll[-1]) = 0 \leq \beta - a$ where we skip the routine verification of the inequality.
It follows from \Cref{lo3} that 
\begin{equation}\label{eqa1.2.2}
h^1(X_2,\Ii_{(2\widetilde{S}, H)\cup \overline S}\otimes \Ll[0])) = 0.
\end{equation}
From the Horace Differential Lemma for double points (\Cref{Differential Horace lemma}) applied to $W=(2\overline S, X_2)$ and $\mathcal{R}= \mathcal{L}[1]$ combined with semicontinuity, \cref{eqa1.2.1} and \cref{eqa1.2.2} we get $h^1(X_2,\Ii_{Z}\otimes \Ll[1]) = 0$ and therefore, $h^0(X_2,\Ii_Z\otimes \Ll[1])  = 3\alpha - (r+3)z \leq  3\alpha - (r+2)z$.

\item

Now assume $z<a$. We take $Z'=\widetilde{Z}=\emptyset$ and $\#\overline Z=z$. 
We have $h^1(H, \Ii_{(2\overline{S}, H)}\otimes \Ll[1]_{|H}) = 0$ and $h^1(X_2,\Ii_{\overline{S}} \otimes \Ll[0]) =0$. Therefore, by the standard Horace Lemma we have $h^1(X_2,\Ii_{(2\overline{S}, X_2)} \otimes \Ll[1])=0$. Thus, $h^0(X_2,\Ii_{Z} \otimes \Ll[1]) = 3\alpha - (r+3)z \leq 3\alpha - (r+2)z$.
\end{itemize}
\vspace{-0,5cm}
\end{proof}

At this stage, we have gathered all essential elements to formulate the main theorem within this paper.

\begin{theorem}
    \label{a5}
Let $(Y, \Ll), X_2, r$ and $\alpha$ be as in \Cref{a4}. If $t\ge 2$  and the pair $(Y, \Ll)$ is not secant defective,  then $(X_2,\Ll[t])$ is not secant defective.
\end{theorem}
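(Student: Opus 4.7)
The plan is to induct on $t\ge 2$. The base case $t=2$ is precisely Lemma~\ref{a4}. For the inductive step, suppose $(X_2,\mathcal L[s])$ is not secant defective for every $2\le s\le t$; we prove the same for $\mathcal L[t+1]$. Pick $z\in\{\lfloor\alpha\binom{t+3}{2}/(r+3)\rfloor,\lceil\alpha\binom{t+3}{2}/(r+3)\rceil\}$, set $j=1$ or $j=0$ correspondingly, and let $Z$ be a general union of $z$ double points of $X_2$. The goal is $h^j(X_2,\mathcal I_Z\otimes\mathcal L[t+1])=0$.

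Fix $H=Y\times L$ for a line $L\subset\mathbb P^2$, so $\mathcal L[t+1](-H)=\mathcal L[t]$ and $h^0(H,\mathcal L[t+1]_{|H})=\alpha(t+2)$. Set $a:=\lfloor\alpha(t+2)/(r+2)\rfloor$ and $b:=\alpha(t+2)-(r+2)a$, decompose $Z=Z'\cup(2\overline Z,X_2)\cup(2\widetilde Z,X_2)$ with $|\overline Z|=a$, $|\widetilde Z|=b$, specialize $\overline Z$ onto a general $\overline S\subset H$ of the same cardinality, and pick a general $\widetilde S\subset H$ disjoint from $\overline S$ with $|\widetilde S|=b$. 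By semicontinuity together with the Horace Differential Lemma~\ref{Differential Horace lemma} applied to $W=Z'\cup(2\overline S,X_2)$ and $\mathcal R=\mathcal L[t+1]$, one gets
\[
h^j(X_2,\mathcal I_Z\otimes\mathcal L[t+1])\le h^j(X_2,\mathcal I_{Z'\cup\overline S\cup(2\widetilde S,H)}\otimes\mathcal L[t])+h^j(H,\mathcal I_{(2\overline S,H)\cup\widetilde S}\otimes\mathcal L[t+1]_{|H}).
\]
The trace term on $H$ vanishes because its degree equals $(r+2)a+b=\alpha(t+2)=h^0(H,\mathcal L[t+1]_{|H})$ and $(H,\mathcal L[t+1]_{|H})=(Y\times\mathbb P^1,\mathcal L[t+1])$ is not secant defective by Lemma~\ref{a4.1}.

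The residual term is the heart of the induction: we must vanish $h^j$ of the mixed scheme $Z'\cup\overline S\cup(2\widetilde S,H)$ against $\mathcal L[t]$, combining $z-a-b$ general double points of $X_2$, $a$ general simple points of $H$, and $b$ general double points of $H$. For the first inductive step ($t=2$), Lemma~\ref{a5.0} directly yields $h^1=0$ for the sub-configuration $Z'\cup(2\widetilde S,H)$; the simple points $\overline S$ are absorbed via Lemma~\ref{lo3}, the necessary $h^0$-excess being guaranteed by the numerics of Lemma~\ref{a5.0} combined with Lemma~\ref{a1.2}; the $j=0$ ceiling case is analogous. For $t\ge 3$, iterate via the Simultaneous Horace Differential Lemma~\ref{rmk:simultaneous_Horace} applied to a second divisor $H'=Y\times L'$ with $L'\ne L$ and $E=H\cap H'=Y\times\{o\}$: after specializing $(2\widetilde S,H)$ further so that its components are supported in $E$ and adding an appropriate number of simple points, the $H'$-trace saturates $h^0(H',\mathcal L[t]_{|H'})=\alpha(t+1)$ and vanishes by Lemma~\ref{a4.1}, while the new residual at level $\mathcal L[t-1]$ on $X_2$ is handled by the inductive hypothesis (again absorbing the accumulated simple points via Lemma~\ref{lo3}).

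The main obstacle is the numerical bookkeeping: one must verify non-negativity of $a$, $b$ and $z-a-b$, and at every Horace stage that the $h^0$-excess of the corresponding pure-double-point subconfiguration (controlled by the inductive hypothesis together with Lemma~\ref{a1.2}) strictly exceeds the number of simple points being absorbed, uniformly in $(r,\alpha)$ within the ranges specified in the theorem. These verifications are in the same spirit as, and partly reuse, the inequalities collected in Section~\ref{Section:claims}.
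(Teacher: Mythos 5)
Your outer layer coincides with the paper's proof: same induction on $t$ with base case Lemma~\ref{a4}, same choice of $z$, same divisor $H=Y\times\PP^1$, same $a,b$ and decomposition $Z=Z'\cup(2\overline Z,X_2)\cup(2\widetilde Z,X_2)$, same application of Lemma~\ref{Differential Horace lemma} with the trace on $H$ killed by Lemma~\ref{a4.1}, and the first inductive step handled by Lemma~\ref{a5.0} plus Lemma~\ref{lo3} and Lemma~\ref{a1.2}. (You should also record explicitly that $z\ge a+b$, which is the paper's Claim~\ref{claim11}, but you acknowledge this kind of bookkeeping.)

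The genuine gap is in your treatment of the residual scheme $Z'\cup\overline S\cup(2\widetilde S,H)$ in degree $t$ when the target degree is at least $4$. You propose to apply the Simultaneous Horace Differential Lemma~\ref{rmk:simultaneous_Horace} with a second divisor $H'=Y\times L'$ and $E=H\cap H'$. But the output of that lemma is a residual containing $(2\widetilde S,E)$, i.e.\ $b$ double points of $E\cong Y$ sitting in codimension $2$ in $X_2$, to be controlled against $\Ll[t-1]$ with $t-1\ge 2$. This configuration is \emph{not} a general union of double points of $X_2$, so "handled by the inductive hypothesis" does not apply to it; nor is it covered by Remark~\ref{lol1} (which only works in degree $0$, via the projection to $Y$) or by Lemma~\ref{a3} (which only treats $\Ll[1]$). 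Moreover your claim that the $H'$-trace "saturates $h^0(H',\Ll[t]_{|H'})$ and vanishes by Lemma~\ref{a4.1}" is off: the virtual trace of a double point of $H$ with respect to $H'$ is a simple point, and for the $j=0$ case you cannot enlarge the scheme by extra simple points, since $h^0(\Ii_{W\cup S}\otimes\Rr)=0$ does not imply $h^0(\Ii_W\otimes\Rr)=0$. The paper avoids all of this: in its Claim~\ref{claim10} it stays with the \emph{same} divisor $H$, splits off $c=\lfloor t\alpha/(r+2)\rfloor-b$ further double points of $Z'$, specializes them onto $H$, and uses the standard Horace lemma so that the trace on $H$ in degree $t-1$ is a union of double points of $H$ of degree at most $h^0(H,\Ll[t-1]_{|H})$, killed by Lemma~\ref{a4.1}; the new residual is then a general union of double points of $X_2$ plus simple points in degree $t-2\ge 2$, which genuinely is covered by the inductive hypothesis together with Lemma~\ref{lo3} (and Lemma~\ref{a1.2} for the borderline case $t=4$). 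You would need to replace your simultaneous-Horace step by an argument of this kind, or else prove a new lemma controlling unions of general double points of $X_2$ with double points of $E$ in degrees $\ge 2$.
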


\begin{proof}
Since the case $t=2$ was already proved in \Cref{a4}, we may assume $t>2$ and proceed by induction. Namely assume that   $(X_2,\Ll[x])$ is non secant defective for  $2\le x<t$. It is sufficient to prove
that either $h^1(X_2,\Ii_Z\otimes \Ll[t])=0$ or $h^0(X_2,\Ii_Z\otimes \Ll[t])=0$ for a general union $Z\subset X_2$ of $z$ double points of $X_2$ for all $z\in \{\lfloor \binom{t+2}{2}\alpha/(r+3)\rfloor, \lceil\binom{t+2}{2}\alpha/(r+3)\rceil\}$.
Set $j:= 1$ if $z=\lfloor \binom{t+2}{2}\alpha/(r+3)\rfloor$ and $j:=0$ otherwise. We need to prove that $h^j(X_2,\Ii_Z\otimes \Ll[t])=0$.
Let $H=X\times \PP^1$ be a divisor of $X_2$ as in \Cref{NotationYandX}. By 
\Cref{a4.1} ({\cite[Theorem 2]{b}}) the pairs $(H,\Ll[t]_{|H})$ and $(H,\Ll[t-1]_{|H})$ are not secant defective.
Set $$a:= \lfloor (t+1)\alpha/(r+2)\rfloor \hbox{ and  } b:= (t+1)\alpha - (r+2)a.$$

\begin{claim}\label{claim11}
$z\ge a+b$
\end{claim}

See \Cref{Section:claims} for the proof of \Cref{claim11}.

Decompose $Z$ into the disjoint union $Z=Z'\cup (2\overline{Z}, X_2)\cup (2\widetilde{Z}, X_2)$ where $\#\overline{Z} = a$ and $\#\widetilde{Z} = b$. Take a general $\overline{S}\cup \widetilde{S}\subset H$ such that $\#\overline{S} = a$, $\#\widetilde{S} = b$ and $\overline{S}\cap \widetilde{S} = \emptyset$. 
By semicontinuity and Horace Differential Lemma for double points (\Cref{Differential Horace lemma}) applied to $W=Z'\cup (2\overline{S}, X_2)$ and $\Rr = \Ll[t]$ we have that $h^j(X_2,\mathcal{I}_Z \otimes \mathcal{L}[t]) \leq h^j(X_2,\Ii_{Z'\cup \overline{S}\cup (2\widetilde{S}, H)}\otimes \Ll[t-1]) + h^j(H,\Ii_{(2\overline{S}, H) \cup \widetilde{S}} \otimes \Ll[t]_{|H})$.
In order to show that $h^j(X_2,\Ii_Z\otimes \Ll[t])=0$, it is now sufficient  to prove that $h^j(X_2,\Ii_{Z'\cup \overline{S}\cup (2\widetilde{S}, H)}\otimes \Ll[t-1]) = h^j(H,\Ii_{(2\overline{S}, H) \cup \widetilde{S}} \otimes \Ll[t]_{|H})=0$.
By \Cref{a4.1} the pair $(H,\Ll[t]_{|H})$ is not secant defective, moreover here the degree of the scheme is equal to $h^0(H, \Ll[t]_{|H})$ so $h^i(H,\Ii_{(2\overline{S},H)\cup \widetilde{S}}\otimes \Ll[t]_{|H})=0$, $i=0,1$. 
Thus it is sufficient to prove $h^j(X_2,\Ii_{Z'\cup \overline{S}\cup (2\widetilde{S},H)}{\otimes \Ll[t-1]})=0$.

{\hangindent=1cm
\begin{claim}\label{claim10}
$h^1(X_2,\Ii_{Z'\cup (2\widetilde{S},H)}\otimes \Ll[t-1])=0$.
\end{claim}
\begin{proof}[Proof of \Cref{claim10}]
If $b=0$ there are no double points on $H$ (i.e. $\widetilde S$ is empty), so we just use that $(X_2,\Ll[t-1])$ is not secant defective and $(r+3)(z-a) \le \binom{t+1}{2}\alpha$, because $(r+3)z \le \binom{t+2}{2}\alpha +r+2$, $(r+2)a =(t+1)\alpha$ and $a\ge r+2$. 
Now assume $b>0$. If $t=3$, then \Cref{claim10} is true by  \Cref{a5.0} applied with $B=\widetilde{S}$. Thus we may assume $t\ge 4$. Set $$c:= \lfloor t\alpha/(r+2)\rfloor -b.$$ Decompose $Z'$ into the disjoint union $Z'=Z''\cup (2\widehat{Z}, X_2)$ where $\#\widehat{Z}$ is specified below depending on the value of $c$. Let $\widehat{S}$ be a general subset of $H$ such that $\widehat{S}\cap \widetilde{S} = \emptyset$ and $\#\widehat{Z} = \#\widehat{S}$.

\begin{enumerate}[label=(\arabic*)]
\item\label{quada1} Assume that $c\le z-a-b$.
Let $\#\widehat{Z} = c$. Since $(H,\Ll[t-1]_{|H})$ is not secant defective, $h^1(H,\Ii_{(2\widehat{S},H)\cup (2\widetilde{S},H)}\otimes \Ll[t-1]_{|H})=0$. Thus by semicontinuity and the standard Horace Lemma, in order to prove \Cref{claim10} it is sufficient to prove $h^1(X_2,\Ii_{Z''\cup \widehat{S}}\otimes \Ll[t-2]) =0$.
One can compute that $\deg (Z''\cup \widehat{S}) \le \binom{t}{2}\alpha$.  Since $t\ge 4$ the inductive assumption gives $h^1(X_2,\Ii_{Z''}\otimes \Ll[t-2]) =0$. Therefore, $h^0(X_2,\Ii_{Z''}\otimes \Ll[t-2]) =\binom{t}{2}\alpha -(r+3)(z-a-b-c)$. Hence to prove that $h^1(X_2,\Ii_{Z''\cup \widehat{S}}\otimes \Ll[t-2]) =0$ it is sufficient by \Cref{lo3} to prove that $h^0(X_2,\Ii_{Z''}\otimes \Ll[t-3]) \le \binom{t}{2}\alpha -(r+3)(z-a-b-c)-c$.

\begin{itemize}
    \item If $t\ge 5$, the last inequality is true, because $(X_2,\Ll[t-3])$ is not secant defective {and $c\leq c+b\leq t\alpha / (r+2) \leq (t-1)\alpha$.} 
    \item  Now assume $t=4$ with the assumption $z\ge a+b+c$. We have $0\le b\le r+1$, $(r+2)a +b =5\alpha$, $|(r+3)z -15\alpha|\le r+2$ and $4\alpha -r-1 \le (r+2)(b+c)\le 4\alpha$ and it is sufficient to prove
    that $h^0(X_2,\Ii_{Z''}\otimes \Ll[1]) \le 6\alpha -(r+3)(z-a-b-c)-c$. 
    Lemma \ref{a1.2} gives $h^0(X_2,\Ii_{Z''}\otimes \Ll[1]) \le \max\{0,3\alpha -(r+2)(z-a-b-c)\}$. Thus it is sufficient to prove that $(z-a-b-c) + c\le 3\alpha$.  
\end{itemize}

\item\label{quada2} If $a+b\le z<a+b+c$ to get \Cref{claim10} using the proof in \cref{quada1} we take $\#\widehat{Z} =z-a-b$ and $Z''=\emptyset$. 
\end{enumerate}
\vspace{-0,5cm}
\end{proof}
}

\Cref{claim10} gives $h^0(X_2,\Ii_{Z'\cup (2\widetilde{S},H)}\otimes \Ll[t-1])=\binom{t+1}{2}\alpha -(r+3)(z-a-b)-(r+2)b$. By \Cref{lo3} and the generality of $\overline{S}$ in $H$, in order to conclude the proof of \Cref{a5} it is sufficient to prove that $h^0(X_2,\Ii_{Z'}\otimes \Ll[t-2]) \le \max \{0,\binom{t+1}{2}\alpha -(r+3)(z-a-b)-(r+2)b-a\}$. If $t\ge 4$ we can use the inductive assumption. Namely, since $(X_2,\Ll[t-2])$ is not secant defective, we have $h^0(X_2,\Ii_{Z'}\otimes \Ll[t-2]) = \max \{0, \binom{t}{2}\alpha - (r+3)(z-a-b)\}$ so it is sufficient to observe that $\binom{t}{2}\alpha \leq \binom{t+1}{2}\alpha - (r+2)b-a$.
Now assume $t=3$. We need to check the inequality $h^0(X_2,\Ii_{Z'}\otimes \Ll[1]) \le \max \{0,6\alpha -(r+3)(z-a-b) -(r+2)b-a\}$. 
 \Cref{a1.2} gives $h^0(X_2,\Ii_{Z'}\otimes \Ll[1]) \le \max \{0,3\alpha -(r+2)(z-a-b)\}$. Thus it is sufficient to prove the inequality $3\alpha \ge z-a-b +(r+2)b +a =z +(r+1)b$. We have $(r+3)(z+(r+1)b) \leq 10\alpha + (r+2) + (r+1)^2(r+3)$
so it is enough to show that $3\alpha(r+3) \geq 10\alpha + (r+2)+(r+1)^2(r+3)$, or equivalently, that
\[
\alpha(3r-1) \geq r^3+5r^2+8r+5.
\]
This inequality holds by the assumed lower bounds on $\alpha$ from \Cref{a4}.
\end{proof}

\section{Applications}\label{Applications}

\begin{proposition}\label{P1orP2}
Assume that $n\in \{1,2\}$ and $t\geq 2.$
If $(Y, \mathcal{L})$ satisfies the assumptions of \Cref{a5}, then so does $(Y\times \PP^n, \mathcal{L}[t])$.
\end{proposition}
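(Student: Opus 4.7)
The plan is to verify, hypothesis by hypothesis, that $(X,\Mm) := (Y \times \PP^n, \Ll[t])$ satisfies every assumption that \Cref{a5} imposes on its input pair. Explicitly, I need to check: $X$ is integral projective, $\Mm$ is very ample, $h^1(X,\Mm)=0$, $(X,\Mm)$ is not secant defective, and $\alpha' := h^0(X,\Mm)$ meets the numerical lower bound of \Cref{a5} associated to $\dim X = r+n$.

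The easy items would be dispatched first. Integrality and projectivity of $Y\times \PP^n$ are immediate. Very ampleness of $\Mm=\pi_Y^\ast\Ll\otimes\pi_{\PP^n}^\ast\Oo_{\PP^n}(t)$ follows from that of the two factors. The K\"unneth formula gives
\[
h^1(X,\Mm)=h^0(Y,\Ll)\,h^1(\PP^n,\Oo_{\PP^n}(t))+h^1(Y,\Ll)\,h^0(\PP^n,\Oo_{\PP^n}(t))=0.
\]
For the non-secant-defectiveness of $(X,\Mm)$ I would split on $n$: if $n=1$, \Cref{a4.1} applies, since its hypotheses $r>1$ and $\alpha>(r+1)^2$ are comfortably implied by the lower bounds on $\alpha$ assumed in \Cref{a5}, and $(Y,\Ll)$ is by assumption not secant defective; if $n=2$, I would invoke \Cref{a5} itself on $(Y,\Ll)$.

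The only place where genuine work is required is the numerical check on $\alpha'=\alpha\binom{n+t}{n}$ for dimension $r+n$. Since $t\geq 2$ and $n\in\{1,2\}$, the worst case is $t=2$, yielding $\alpha'\geq 3\alpha$ (for $n=1$) or $\alpha'\geq 6\alpha$ (for $n=2$). The check then reduces to a finite case lookup for the shifted dimension $r+n \in \{3,\dots,9\}$, comparing $3\alpha$ or $6\alpha$ against the threshold listed in \Cref{a5}, together with the polynomial inequality
\[
(n+1)\,(27r^3+144r^2+210r+79)\geq 27(r+n)^3+144(r+n)^2+210(r+n)+79
\]
for $r\geq 8$, which is elementary by expansion (for $n\in\{1,2\}$ and $r\geq 8$ the leading cubic on the left beats that on the right by a factor $n+1-(1+n/r)^3>0$, and the gap only grows with $r$). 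I expect nothing beyond this bookkeeping to be an obstacle; all five conditions thereby hold and the proposition follows.
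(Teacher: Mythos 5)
Your proposal is correct and follows essentially the same route as the paper: non-secant-defectiveness of $(Y\times\PP^n,\Ll[t])$ via \Cref{a4.1} for $n=1$ (using $\alpha>(r+1)^2$) and via \Cref{a5} for $n=2$, followed by the verification that the dimension $r+n$ and section count $\binom{t+n}{n}\alpha$ still satisfy the numerical thresholds of \Cref{a4}. The paper compresses the latter into ``one may verify,'' whereas you spell out the finite case check and the cubic inequality; the extra Künneth and very-ampleness checks you include are left implicit in the paper but harmless.
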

\begin{proof}
    Observe that if $r$ and $\alpha$ are as in \Cref{a4}, then $\alpha > (r+1)^2$. Therefore, $(Y\times \PP^n, \mathcal{L}[t])$ is not secant defective by \Cref{a4.1} ({\cite[Theorem 2]{b}}) if $n=1$ and by \Cref{a5} if $n=2$. One may verify that $r+n$ and $\binom{t+n}{n} \alpha$ satisfy the inequalities in \Cref{a4}.
\end{proof}

\begin{proposition}
    \label{twofactors_nd}
For every $t\geq 3$ the pair $(\PP^2\times \PP^2, \mathcal{O}_{\PP^2\times \PP^2}(2, t))$ is not secant defective.
\end{proposition}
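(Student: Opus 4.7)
The plan is to invoke the main theorem \Cref{a5} with $Y = \PP^2$ and $\Ll = \mathcal{O}_{\PP^2}(t)$. For this pair one has $r = \dim Y = 2$ and $\alpha = h^0(\PP^2,\mathcal{O}(t)) = \binom{t+2}{2}$. By the Alexander-Hirschowitz theorem, $(\PP^2, \mathcal{O}(t))$ is not secant defective whenever $t \notin \{2, 4\}$, and the numerical hypothesis $\alpha \geq 60$ from \Cref{a4} in the case $r=2$ is equivalent to $\binom{t+2}{2}\ge 60$, i.e., to $t \geq 10$. Applying \Cref{a5} with tensoring exponent $2$ therefore gives that $(\PP^2 \times \PP^2, \Ll[2]) = (\PP^2 \times \PP^2, \mathcal{O}(t, 2))$ is not secant defective for every $t \geq 10$. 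Since the two $\PP^2$ factors can be swapped by an automorphism of $\PP^2\times \PP^2$, this is equivalent to the non-defectivity of $(\PP^2 \times \PP^2, \mathcal{O}(2, t))$ for all $t \geq 10$.

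It remains to treat the finite list $t \in \{3, 4, 5, 6, 7, 8, 9\}$, which lies below the asymptotic threshold $\alpha \geq 60$ and hence outside the direct reach of \Cref{a5}. Each is one explicit, low-dimensional Segre-Veronese variety $\PP^2\times \PP^2\hookrightarrow \PP^{6\binom{t+2}{2}-1}$, and non-defectivity can be verified case by case: for the critical value $z\in \{\lfloor 6\binom{t+2}{2}/5\rfloor, \lceil 6\binom{t+2}{2}/5\rceil\}$ one checks, for instance via \Cref{terracini} combined with a computer algebra computation, the vanishing of either $h^0$ or $h^1$ of $\mathcal{I}_Z\otimes \mathcal{O}(2,t)$ for a general union $Z$ of $z$ double points. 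Alternatively, one can appeal to previously established results on secant varieties of Segre-Veronese surfaces with one factor embedded in degree two.

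The main obstacle is precisely the asymptotic nature of the Horace-based machinery developed in the paper: the bound $\alpha \geq 60$ in \Cref{a4} is a genuine threshold of the technique and cannot be bypassed by the inductive apparatus alone, forcing the separate treatment of the seven small cases. No new conceptual ingredient is needed, only a finite, explicit verification.
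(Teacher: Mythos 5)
Your proposal is correct and follows exactly the paper's own argument: apply \Cref{a5} with $Y=\PP^2$, $\Ll=\Oo_{\PP^2}(t)$ (so $r=2$, $\alpha=\binom{t+2}{2}\ge 60$ precisely when $t\ge 10$, with non-defectivity of $(\PP^2,\Oo(t))$ from Alexander--Hirschowitz), and then settle the finitely many cases $3\le t\le 9$ by a Terracini-plus-Macaulay2 check. The paper does the same, including the computer verification of the small cases.
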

\begin{proof}
    For $t\geq 10$ we have $h^0(\PP^2, \mathcal{O}_{\PP^2}(t)) \geq 60$. Furthermore, for such values of $t$, by Alexander-Hirschowitz theorem (cf. \cite{ah}) 
    the pair $(\PP^2, \mathcal{O}_{\PP^2}(t))$ is not secant defective. Therefore, by \Cref{a5} it is sufficient to prove the statement for $3\leq t \leq 9$. Due to Terracini's lemma these cases can be checked on a computer to be not secant defective. We did it with Macaulay2 (\cite{M2}).
\end{proof}

\begin{proposition}
\label{threefactors_nd}
For every $t\geq 2$ the pair $(\PP^2\times\PP^2\times \PP^2, \mathcal{O}_{\PP^2\times \PP^2\times \PP^2}(2,2,t))$ is not secant defective.
\end{proposition}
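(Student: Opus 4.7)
The plan is to apply \Cref{a5} with base variety $Y := \PP^2 \times \PP^2$ and line bundle $\mathcal{L} := \mathcal{O}_Y(2,t)$, so that $\mathcal{L}[2] = \mathcal{O}(2,t,2)$ on $Y \times \PP^2 = (\PP^2)^3$; by permuting the three $\PP^2$-factors this is equivalent to $\mathcal{O}(2,2,t)$, matching the statement. The choice of which two factors to collect into $Y$ is forced by the requirement that $(Y,\mathcal{L})$ be non-defective: we cannot take $\mathcal{L}=\mathcal{O}(2,2)$, which is one of the known defective Segre--Veronese pairs.

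First I would verify that $(Y,\mathcal{L})$ satisfies the hypotheses of \Cref{a5}. We have $\dim Y = 4$, the bundle $\mathcal{L}$ is very ample, and $h^1(Y,\mathcal{L})=0$ by the Künneth formula. By \Cref{twofactors_nd}, $(\PP^2\times\PP^2,\mathcal{O}(2,t))$ is not secant defective for every $t\geq 3$. The numerical condition from \Cref{a5} in the case $r=4$ requires $\alpha = h^0(Y,\mathcal{L}) \geq 98$; since $\alpha = 6\binom{t+2}{2}$, this holds precisely for $t\geq 5$. For those $t$, \Cref{a5} (invoked with its own tensor parameter equal to $2$) gives that $(Y\times \PP^2,\mathcal{L}[2])$ is not secant defective, and the permutation of factors yields the claim for $\mathcal{O}(2,2,t)$.

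The finitely many remaining cases are $t\in\{2,3,4\}$. For $t=3$ and $t=4$ one has $\alpha = 60$ and $\alpha = 90$, both below the threshold $98$, so the asymmetric reduction via \Cref{a5} is not available; for $t=2$ even the choice of $Y$ fails, because $\mathcal{O}(2,2)$ on $\PP^2\times\PP^2$ is defective, ruling out every two-factor base of this form. For these three boundary cases I would verify non-defectivity directly: by Terracini's Lemma (\Cref{terracini}), it suffices to check for general unions of $\lfloor h^0/7\rfloor$ and of $\lceil h^0/7\rceil$ double points on $(\PP^2)^3$ embedded by $\mathcal{O}(2,2,t)$ that the induced linear systems have the expected dimensions, which can be done with Macaulay2 \cite{M2}, exactly as in the proof of \Cref{twofactors_nd}.

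The main obstacle is precisely this small-degree regime, especially $t=2$: no inductive scheme via \Cref{a5} or \Cref{P1orP2} is available because every natural two-factor base extracted from $(\PP^2)^3$ with $\mathcal{O}(2,2,2)$ is the defective pair $(\PP^2\times\PP^2,\mathcal{O}(2,2))$, so the conclusion rests on a finite, explicit computer verification for the three boundary values of $t$.
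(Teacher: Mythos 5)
Your proposal is correct, and its reduction for $t\geq 5$ is exactly the paper's: take $Y=\PP^2\times\PP^2$ with $\mathcal{L}=\mathcal{O}(2,t)$, note $\alpha=6\binom{t+2}{2}\geq 98$ and non-defectivity of $(Y,\mathcal{L})$ from \Cref{twofactors_nd}, and apply \Cref{a5}. Where you diverge is in the boundary cases $t\in\{2,3,4\}$. You propose a direct Terracini/Macaulay2 verification of $(\PP^2)^3$ with $\mathcal{O}(2,2,t)$ for all three values, i.e.\ rank computations in spaces of dimension $h^0=216$, $360$ and $540$. The paper only computes $t=2$ (and $t=1$) directly on $(\PP^2)^3$; for $t=3,4$ it instead verifies computationally the smaller pairs $(\PP^2\times\PP^2\times\PP^1,\mathcal{O}(2,2,3))$ and $(\PP^2\times\PP^2\times\PP^1,\mathcal{O}(2,2,4))$ (with $h^0=144$ and $180$) and then runs the standard Horace lemma with the divisor $H=\PP^2\times\PP^2\times\PP^1$, specializing $\overline{z}=6(t+1)$ double points onto $H$ so that the trace exactly fills $h^0(H,\mathcal{L}[t]_{|H})$ and the residual is handled by the already-established cases $(2,2,1)$ and $(2,2,2)$ together with \Cref{lo3}. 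Both routes are legitimate: yours is shorter to state and avoids the extra Horace step, at the price of heavier (though still entirely feasible) rank computations; the paper's keeps every computer check in dimension at most $216$. Your diagnosis of why $t=2$ resists the inductive scheme --- every two-factor base is the defective pair $(\PP^2\times\PP^2,\mathcal{O}(2,2))$ --- matches the paper's implicit reasoning, and your insistence on checking both $\lfloor h^0/7\rfloor$ and $\lceil h^0/7\rceil$ double points is the correct criterion.
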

\begin{proof}
For $t\geq 5$ we have $h^0(\PP^2\times \PP^2, \mathcal{O}_{\PP^2\times \PP^2}(2,t)) \geq 98$. Furthermore, for such values of $t$, the pair $(\PP^2\times \PP^2, \mathcal{O}_{\PP^2\times \PP^2}(2,t))$ is not secant defective by \Cref{twofactors_nd}.
Therefore, by \Cref{a5} it is sufficient to prove the Proposition for $2\leq t\leq 4$. Using Terracini's lemma one may check computationally that the following pairs are not secant defective
\begin{enumerate}[label=\arabic*.]
    \item \label{222:221}$(\PP^2\times \PP^2\times \PP^2, \mathcal{O}_{\PP^2\times \PP^2\times \PP^2}(2,2,1))$;
    \item  \label{222:222}$(\PP^2\times \PP^2\times \PP^2, \mathcal{O}_{\PP^2\times \PP^2\times \PP^2}(2,2,2))$;
    \item \label{221:223}$(\PP^2\times \PP^2\times \PP^1, \mathcal{O}_{\PP^2\times \PP^2\times \PP^1}(2,2,3))$;
    \item \label{221:224}$(\PP^2\times \PP^2\times \PP^1, \mathcal{O}_{\PP^2\times \PP^2\times \PP^1}(2,2,4))$.
\end{enumerate}
Again we did it with Macaulay2 (\cite{M2}).

Let $X=\PP^2\times \PP^2\times \PP^2$ and $H = \PP^2\times \PP^2\times \PP^1\subset X$ be a divisor. Let $\mathcal{L} = \mathcal{O}_{\PP^2\times \PP^2}(2,2)$. Using this notation, we need to prove that the pairs $(X, \mathcal{L}[t])$ with $t=3,4$ are not secant defective. 
Let $z\in \{\lfloor h^0(X, \mathcal{L}[t])/(\dim X+1)\rfloor = \lfloor \frac{36}{7}\binom{t+2}{2}\rfloor, \lceil \frac{36}{7}\binom{t+2}{2}\rceil\}$ and $Z$ be the union of $z$ general double points of $X$. Let $j=1$ if $z=\lfloor \frac{36}{7}\binom{t+2}{2}\rfloor$ and $j=0$ otherwise. It is sufficient to prove that $h^j(X, \mathcal{I}_Z\otimes \mathcal{L}[t]) = 0$.
Let $\overline{z} = 6(t+1)$ and $\overline{S}\subset H$ be a general subset with $\#\overline{S} = \overline{z}$.
Decompose $Z$ into the disjoint union $Z = Z'\cup (2\overline{Z}, X)$ where $\overline{Z}$ is a subset of the support of $Z$ of cardinality $\overline{z}$. By the standard Horace Lemma and semicontinuity, it is now sufficient to show that 
$h^j(X, \mathcal{I}_{Z'\cup \overline{S}}\otimes \mathcal{L}[t-1]) = h^j(H, \mathcal{I}_{(2\overline{S}, H)}\otimes \mathcal{L}[t]_{|H}) = 0$.
We have $\deg (2\overline{S}, H) = 6\overline{z} = h^0(H, \mathcal{L}[t]_{|H})$. Therefore, $h^j(H, \mathcal{I}_{(2\overline{S}, H)}\otimes \mathcal{L}[t]_{|H}) = 0$ since $(H, \mathcal{L}[t]_{|H})$ is not secant defective as mentioned above in \cref{221:223} and \cref{221:224}

On the other hand, $\deg Z' = 7(z-\overline{z}) \leq   36 \binom{t+1}{2} = h^0(X, \mathcal{L}[t-1])$.
Therefore, since the pair $(X, \mathcal{L}[t-1])$ is not secant defective (for $t=3$ it was mentioned above in \cref{222:222} and for $t=4$ it follows after we prove this Proposition for $t=3$), we obtain  that $\beta:= h^0(X, \mathcal{I}_{Z'}\otimes \mathcal{L}[t-1]) = 36 \binom{t+1}{2} - 7(z-\overline{z})$. In particular, $h^1(X, \mathcal{I}_{Z'}\otimes \mathcal{L}[t-1]) = 0$. Furthermore, $\mathrm{Res}_H(Z') = Z'$ since $ Z'$ is general in $X$. Moreover $h^0(X, \mathcal{I}_{Z'}\otimes\mathcal{L}[t-2]) = \max\{0, 36\binom{t}{2} - 7(z-\overline{z})\}$ since the pair $(X, \mathcal{L}[t-2])$ is not secant defective by \cref{222:221} and \cref{222:222}
It follows from \Cref{lo3} that to conclude the proof it is sufficient to check that $36\binom{t}{2} -7(z-\overline{z}) \leq \beta-\overline{z} = 36 \binom{t+1}{2} - 7z + 6\overline{z}$. This is equivalent to the inequality $\overline{z} \leq 36 t$ which holds since $\overline{z} = 6(t+1) \leq 30 \leq 36t$.
\end{proof}

\begin{theorem}\label{prod:P2}
The pair $((\PP^2)^k, \mathcal{O}_{(\PP^2)^k}(t_1,\ldots, t_k))$ with $t_1,\ldots, t_k\geq 2$ is secant defective if and only if one of the following holds:
\begin{itemize}
    \item $k=1$ and $t_1\in \{2,4\}$
    \item $k=2$ and $t_1=t_2 = 2$.
\end{itemize}
\end{theorem}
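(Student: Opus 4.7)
My plan is induction on $k$, repeatedly invoking \Cref{a5} to peel off one $\PP^2$ factor. For the \emph{if} direction, the three listed defective cases are classical: $(\PP^2,\Oo(2))$ is the Veronese surface in $\PP^5$, $(\PP^2,\Oo(4))$ is the Alexander-Hirschowitz exception, and $(\PP^2\times\PP^2,\Oo(2,2))$ is the defective $3\times 3$ matrix multiplication tensor, so there is nothing to prove there beyond citation.

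For the \emph{only if} direction I would assume without loss of generality $t_1\le t_2\le\cdots\le t_k$ and induct on $k$. The base $k=1$ is Alexander–Hirschowitz; for $k=2$ I would split into $t_1=2$ (so $t_2\ge 3$, handled by \Cref{twofactors_nd}) and $t_1\ge 3$ (Galuppi–Oneto). For the inductive step $k\ge 3$ my plan is to always peel off the \emph{smallest} exponent: write $(\PP^2)^k \cong Y\times \PP^2$ with $Y=(\PP^2)^{k-1}$ and $\Ll=\Oo(t_2,\ldots,t_k)$, so that the line bundle on the product becomes $\Ll[t_1]$, and then apply \Cref{a5} with $t=t_1\ge 2$ and $r=2(k-1)$. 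Peeling off the smallest rather than the largest exponent is crucial: it guarantees that the reduced multidegree $(t_2,\ldots,t_k)$ still has all entries at least $2$, so the inductive hypothesis gives non-defectiveness of $(Y,\Ll)$ in every case except one.

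The numerical hypothesis of \Cref{a5} would be verified through the estimate $\alpha=\prod_{i=2}^k\binom{t_i+2}{2}\ge 6^{k-1}$, which grows exponentially in $r=2(k-1)$ and comfortably dominates the polynomial threshold appearing in \Cref{a5} for every $k\ge 4$ (a direct check for $r=6,7$ and an elementary inequality for $r\ge 8$). For $k=3$ with $t_2\ge 3$ the same check reduces to $\alpha\ge \binom{5}{2}^2=100\ge 98$, which is immediate.

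The single obstruction to this plan is the case $k=3$ with $t_1=t_2=2$: here peeling off $t_1=2$ would leave the base pair $(\PP^2\times\PP^2,\Oo(2,t_3))$, which is itself defective for $t_3=2$ and, even for $t_3\ge 3$, produces only $\alpha=6\binom{t_3+2}{2}$, falling below the threshold $98$ at $t_3=3$. This is exactly the subcase dispatched by \Cref{threefactors_nd}, which handles $(\PP^2)^3$ in multidegree $(2,2,t)$ for all $t\ge 2$ in a single stroke. Thus the hard part is not the inductive step itself but the clean identification that the unique failure point of the induction is precisely what \Cref{threefactors_nd} was designed to cover; once that observation is in place, the rest of the argument reduces to the routine numerical bookkeeping outlined above.
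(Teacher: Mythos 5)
Your proposal is correct and follows essentially the same route as the paper: Alexander--Hirschowitz for $k=1$, \cite{cgg} for the defective $(2,2)$ case, \Cref{twofactors_nd} and \cite{go} for $k=2$, \Cref{threefactors_nd} for the $(2,2,t)$ bottleneck at $k=3$, and \Cref{a5} plus induction for everything else. The only (cosmetic) difference is that you verify the numerical thresholds of \Cref{a5} directly via $\alpha\ge 6^{k-1}$ and organize the cases by peeling the smallest exponent, whereas the paper packages the propagation of the hypotheses into \Cref{P1orP2} and lists the $k=2,3$ cases explicitly.
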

\begin{proof}
    If $k=1$, the secant defective cases are described by  Alexander-Hirschowitz theorem (cf. \cite{ah}).
    The fact that $(\PP^2\times \PP^2, \mathcal{O}_{\PP^2\times \PP^2}(2,2))$ is secant defective was already known (cf. \cite{cgg}).
    The pairs $(\PP^2\times \PP^2, \mathcal{O}_{\PP^2\times \PP^2}(2,t))$ with $t\geq 3$ are not secant defective by \Cref{twofactors_nd}. The pairs $(\PP^2\times \PP^2, \mathcal{O}_{\PP^2\times \PP^2}(t_1,t_2))$ with $t_1,t_2\geq 3$ are not secant defective by \cite{go}.

    The pairs $(\PP^2\times \PP^2\times \PP^2, \mathcal{O}_{\PP^2\times \PP^2\times \PP^2}(2,2,t))$ with $t\geq 2$ are not secant defective by \Cref{threefactors_nd}. The pairs $(\PP^2\times \PP^2\times \PP^2, \mathcal{O}_{\PP^2\times \PP^2\times \PP^2}(t_1,t_2,t_3))$ with $t_1,t_2\geq 3$ and $t_3\geq 2$ are not secant defective by \cite{go} and \Cref{a5} since
    $h^0(\PP^2\times \PP^2, \mathcal{O}_{\PP^2\times \PP^2}(t_1,t_2)) \geq h^0(\PP^2\times \PP^2, \mathcal{O}_{\PP^2\times \PP^2}(3,3)) = 100 \geq 98$.

   Therefore we have that the pair $(\PP^2\times \PP^2\times \PP^2, \mathcal{O}_{\PP^2\times \PP^2\times \PP^2}(t_1,t_2,t_3))$
    is not secant defective for all $t_1,t_2,t_3 \geq 2$. Since $h^0(\PP^2\times \PP^2\times \PP^2, \mathcal{O}_{\PP^2\times \PP^2\times \PP^2}(t_1,t_2,t_3)) \geq h^0(\PP^2\times \PP^2\times \PP^2, \mathcal{O}_{\PP^2\times \PP^2\times \PP^2}(2,2,2)) = 216 \geq 176$ we conclude using \Cref{P1orP2} and induction on $k$ that $((\PP^2)^k, \mathcal{O}_{(\PP^2)^k}(t_1,\ldots, t_k))$ with $t_1,\ldots, t_k\geq 2$ is not secant defective for every $k\geq 3$.
\end{proof}

\begin{proposition}
    \label{112}
    If $s_1,s_2,t\geq 2$ the pair $(\PP^1\times\PP^1\times\PP^2, \mathcal{O}_{\PP^1\times \PP^1\times \PP^2}(s_1,s_2,t))$ is  secant defective if and only if $s_1=s_2=t=2$.
\end{proposition}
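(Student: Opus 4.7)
The plan is to deal with the unique defective case $(s_1,s_2,t)=(2,2,2)$ by a direct Terracini/Macaulay2 check, and to obtain the remaining non-defectivity statements from two successive applications of the results established above. Throughout, by the symmetry of the two $\PP^1$ factors one may assume $s_1\le s_2$.

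The main reduction is an application of \Cref{a5} with $Y=\PP^1\times\PP^1$ and $\mathcal{L}=\mathcal{O}_Y(s_1,s_2)$, so that $r=2$ and $\alpha=(s_1+1)(s_2+1)$. The pair $(Y,\mathcal{L})$ is not secant defective for any $s_1,s_2\ge 1$ (Laface--Postinghel \cite{lp}), so as soon as $\alpha\ge 60$, \Cref{a5} gives non-defectivity of $(Y\times\PP^2,\mathcal{L}[t])=(\PP^1\times\PP^1\times\PP^2,\mathcal{O}(s_1,s_2,t))$ for every $t\ge 2$ simultaneously. This immediately handles all pairs with $(s_1+1)(s_2+1)\ge 60$ and reduces the problem to the finitely many $(s_1,s_2)$ with $2\le s_1\le s_2$ and $(s_1+1)(s_2+1)<60$.

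For these small pairs the plan is to refactor $X=Y'\times\PP^1$ with $Y'=\PP^1\times\PP^2$ and $\mathcal{L}'=\mathcal{O}_{Y'}(s_2,t)$, so that (up to reordering of factors) $\mathcal{L}'[s_1]=\mathcal{O}_X(s_1,s_2,t)$, and then apply \Cref{a4.1}. With $r'=\dim Y'=3$ the required inequality $\alpha'=(s_2+1)\binom{t+2}{2}\ge 18>(r'+1)^2$ is automatic for $s_2,t\ge 2$, so the only remaining input is non-defectivity of $(Y',\mathcal{L}')=(\PP^1\times\PP^2,\mathcal{O}(s_2,t))$. For $s_2,t\ge 3$ this is Galuppi--Oneto \cite{go}; the finitely many cases with $\min(s_2,t)=2$ are dispatched either by a second application of \Cref{a5} (taking $Y''=\PP^2$, $\mathcal{L}''=\mathcal{O}(t)$, which is non-defective by Alexander--Hirschowitz as soon as $\binom{t+2}{2}\ge 60$), or, for the remaining truly small bidegrees, by direct Terracini/Macaulay2 verification.

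The principal obstacle is this final computational catalogue: a short but explicit list of triples $(s_1,s_2,t)$ on $\PP^1\times\PP^1\times\PP^2$ and of bidegrees $(s_2,t)$ on $\PP^1\times\PP^2$ that lie outside the effective range of \Cref{a5}, of the non-defectivity hypothesis of \Cref{a4.1}, and of the bidegree bound in \cite{go}. Each of them must be certified individually via Terracini's Lemma in Macaulay2 \cite{M2}, in the same spirit as the computational steps used in the proofs of \Cref{twofactors_nd} and \Cref{threefactors_nd}; combining these certificates with the two reductions above yields non-defectivity for all $(s_1,s_2,t)\neq(2,2,2)$ with $s_1,s_2,t\ge 2$.
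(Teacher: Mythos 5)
There is a genuine gap, and it stems from a factual error at the heart of your first reduction. You assert that $(\PP^1\times\PP^1,\mathcal{O}(s_1,s_2))$ is not secant defective for any $s_1,s_2\ge 1$ by \cite{lp}; this is false. Laface--Postinghel in fact show that $(\PP^1\times\PP^1,\mathcal{O}(2,2d))$ \emph{is} secant defective for every $d\ge 1$ (its $(2d+1)$-st secant variety is defective) --- this is exactly the case ``$j=2$, $k=0$, $s_1=2$, $s_2=2a$'' listed as defective in \Cref{thm:p1andp2}. Consequently \Cref{a5} cannot be applied with $Y=\PP^1\times\PP^1$ and $\mathcal{L}=\mathcal{O}(2a,2)$, no matter how large $\alpha=3(2a+1)$ is, because its hypothesis that $(Y,\Ll)$ be not secant defective fails. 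Your fallback factorization does not rescue this: writing $X=Y'\times\PP^1$ with $Y'=\PP^1\times\PP^2$ and $\mathcal{L}'=\mathcal{O}(s_2,t)$, the relevant input for the family in question is $(\PP^1\times\PP^2,\mathcal{O}(2,2))$, which is itself $4$-secant defective (cf.\ \cite{cgg}, \cite{BaurDraisma}, and the case ``$j=1$, $k=1$, $s_1=2a$, $t_1=2$'' of \Cref{thm:p1andp2}), so the non-defectivity hypothesis of \Cref{a4.1} also fails. The net effect is that the infinite family $(s_1,s_2,t)=(2a,2,2)$, $a\ge 2$, escapes both of your reductions, and being infinite it cannot be absorbed into a ``finite computational catalogue'' of Macaulay2 checks.

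This is precisely the hard part of the statement, and it is why the paper's proof does not stop at citing \cite{BaurDraisma}, \cite{lp} and \Cref{a5}: after disposing of $t\ge 3$ and of $t=2$ with $s_1,s_2$ not both even via \cite{BaurDraisma} and \Cref{a4.1}, and of $s_1=2a$, $s_2=2b$ with $a,b>1$ via \cite{lp} and \Cref{a5} (plus finitely many computer checks), it runs a dedicated differential Horace argument for $(2a,2,2)$ with $a\ge 6$, specializing points onto the divisor $H=\PP^1\times\PP^1\times\PP^1$ and invoking \Cref{a3} through \Cref{claim7}. Your proposal would be repaired by correctly identifying which $(Y,\mathcal{L})$ are non-defective (so that \Cref{a5} legitimately covers the $a,b>1$ range) and then supplying an independent argument --- not a finite computation --- for the residual infinite family $(2a,2,2)$.
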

\begin{proof}
    See \cite{cgg} for the secant defective case. If either $t\geq 3$ or $t=2$ and $s_1$ and $s_2$ are not both even the pair is not secant defective by \cite{BaurDraisma} together with  \Cref{a4.1} ({\cite[Theorem 2]{b}}).
    In what follows we assume that $t=2$ and $s_1=2a$, $s_2=2b$ for some positive integers $a,b$. Suppose that both $a$ and $b$ are larger than $1$. Then the pair $(\PP^1\times \PP^1, \mathcal{O}_{\PP^1\times \PP^1}(2a,2b))$ is not secant defective by \cite{lp}.
    Therefore, by \Cref{a5}, if $(2a+1)(2b+1) \geq 60$ then the pair $(\PP^1\times\PP^1\times\PP^2, \mathcal{O}_{\PP^1\times \PP^1\times \PP^2}(2a,2b,2))$ is not secant defective. If we assume that $a\geq b$, the remaining cases (still with $a,b>1$) are 
    $(a,b) \in \{(2,2), (3,2), (4,2), (5,2),(3,3)\}$. One may verify with a computer aid that in all these cases the pair
$(\PP^1\times\PP^1\times\PP^2, \mathcal{O}_{\PP^1\times \PP^1\times \PP^2}(2a,2b,2))$ is not secant defective (we did it via Macaulay2 \cite{M2}).
    We verified computationally (with \cite{M2}) that also the pairs $(\PP^1\times\PP^1\times\PP^2, \mathcal{O}_{\PP^1\times \PP^1\times \PP^2}(2a,2,2))$ with $2\leq a \leq 5$ are not secant defective. 
    
    Therefore, in what follows we assume that $a\geq 6$ and $b=1$. 
    
    Let $z\in \{\lfloor 18(2a+1)/5\rfloor , \lceil 18(2a+1)/5\rceil\}$ and let $j=1$ if $z=\lfloor 18(2a+1)/5\rfloor$ and $j=0$ otherwise. Let $Z$ be a general union of $z$ double points of $\PP^1\times \PP^1\times\PP^2$.
    In order to show that the pair $(\PP^1\times\PP^1\times\PP^2, \mathcal{O}_{\PP^1\times \PP^1\times \PP^2}(2a,2,2))$ is not secant defective it is sufficient to show that $h^j(\PP^1\times\PP^1\times\PP^2, \mathcal{I}_Z(2a,2,2)) = 0$.

    Let $\overline{z} = \lfloor 9(2a+1)/4\rfloor$, $\widetilde z = 9(2a+1)-4\overline{z}$ and $z'=z-\overline{z}-\widetilde{z}$. Decompose $Z$ into the disjoint union 
    \[
    Z = Z' \cup (2\overline{Z}, \PP^1\times\PP^1\times \PP^2)\cup (2\widetilde{Z}, \PP^1\times\PP^1\times \PP^2)
    \]
    where $\#\overline{Z} = \overline z$ and $\#\widetilde{Z} = \widetilde{z}$. Let $H=\PP^1\times \PP^1\times \PP^1$
    and $\overline S, \widetilde S$ be general disjoint subsets of $H$ with $\# \overline S = \overline z$ and $\#\widetilde S = \widetilde z$.
    By semicontinuity and Horace Differential Lemma for double points (\Cref{Differential Horace lemma}) applied with $W=Z'\cup (2\overline S, \PP^1\times \PP^1\times \PP^2)$ in order to conclude that $h^j(\PP^1\times\PP^1\times\PP^2, \mathcal{I}_Z(2a,2,2)) = 0$
    it is sufficient to show that
    \[
    h^j(\PP^1\times\PP^1\times\PP^2, \mathcal{I}_{Z'\cup \overline{S} \cup (2\widetilde S, H)}(2a,2,1)) = h^j(H, \mathcal{I}_{(2\overline S, H)\cup \widetilde S}(2a,2,2)) = 0.
    \]  
    The pair $(\PP^1\times\PP^1\times \PP^1, \mathcal{O}_{\PP^1\times\PP^1\times\PP^1}(2a,2,2)$ is not secant defective by \cite{lp}.
    Since 
    \[
    \deg (2\overline S, H)\cup \widetilde S = 4 \overline z+ \widetilde z
    = h^0(H, \mathcal{O}_H(2a,2,2))
    \]
    and $\overline S$, $\widetilde S$ are general we conclude that $h^j(H, \mathcal{I}_{(2\overline S, H)\cup \widetilde S}(2a,2,2)) = 0$.

    \begin{claim}\label{claim7}
    We have 
    \begin{itemize}
        \item $h^1(\PP^1\times\PP^1\times\PP^2, \mathcal{I}_{Z'\cup (2\widetilde S, H)}(2a,2,1)) = 0$.        
        \item $h^0(\PP^1\times\PP^1\times \PP^2, \mathcal{I}_{Z'}(2a,2,0)) = 0$.
    \end{itemize}
    \end{claim}
    For the proof of \Cref{claim7} see \Cref{Section:claims}.
    As a result of \Cref{claim7} and \Cref{lo3} we obtain $h^j(\PP^1\times\PP^1\times\PP^2, \mathcal{I}_{Z'\cup \overline{S} \cup (2\widetilde S, H)}(2a,2,1)) = 0$.
    \end{proof}

\begin{proposition}
    \label{122}
For every $t\geq 2$ the pair $(\PP^1\times \PP^2\times \PP^2, \mathcal{O}_{\PP^1\times \PP^2\times \PP^2}(t, 2,2 ))$ is not secant defective.
\end{proposition}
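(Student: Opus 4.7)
The plan is to invoke Lemma~\ref{a4.1} (that is, \cite[Theorem~2]{b}) in one stroke, by reading the $\PP^1$ factor of $\PP^1 \times \PP^2 \times \PP^2$ as the ``extra'' $\PP^1$ in that lemma. Setting $Y = \PP^2 \times \PP^2$ and $\Ll = \Oo_{\PP^2 \times \PP^2}(2,2)$, the pair $(\PP^1 \times \PP^2 \times \PP^2, \Oo(t,2,2))$ becomes, after permuting factors, $(Y \times \PP^1, \Ll[t])$. Here $r = \dim Y = 4$, $\alpha = h^0(Y,\Ll) = 36 > 25 = (r+1)^2$, and $h^1(Y,\Ll) = 0$, so the numerical hypotheses of Lemma~\ref{a4.1} are met. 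Provided $(Y,\Ll)$ is not $\lfloor \alpha/(r+1)\rfloor = 7$-secant defective, the lemma yields non-defectivity of $(Y \times \PP^1, \Ll[t])$ for every $t \ge 2$, which is precisely the proposition.

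Thus the entire argument collapses to a single cohomological verification: that $(\PP^2 \times \PP^2, \Oo(2,2))$ is not $7$-secant defective. By Terracini's Lemma (\Cref{terracini}) this is the vanishing
\[
h^1\bigl(\PP^2 \times \PP^2,\ \Ii_{(2S,\,\PP^2 \times \PP^2)} \otimes \Oo(2,2)\bigr) = 0
\]
for a general $7$-point set $S \subset \PP^2 \times \PP^2$. This is a finite-dimensional linear algebra computation that I would carry out in Macaulay2 \cite{M2}, entirely in the spirit of the computer-aided checks used in Propositions~\ref{twofactors_nd}, \ref{threefactors_nd}, and~\ref{112}.

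The only genuine obstacle is precisely this verification. The pair $(\PP^2 \times \PP^2, \Oo(2,2))$ is already known to be $3$-secant defective (cf.\ \cite{cgg}, as invoked in Theorem~\ref{prod:P2}), so one must confirm that the defect does not persist to $z = 7$. Heuristically this is the expected behavior, since such Segre-Veronese defects typically affect only the initial secant varieties, and the Macaulay2 run should settle it. Should the $7$-secant defectivity unexpectedly hold, the fallback plan is a standard Horace induction on $t$: specialize general double points to the divisor $H = \{p\} \times \PP^2 \times \PP^2 \cong \PP^2 \times \PP^2$, on which $\Oo(t,2,2)_{|H} = \Oo(2,2)$ and the residual line bundle is $\Oo(t-1,2,2)$, verify the base case $t = 2$ in Macaulay2, and propagate using the non-defectivity of $(\PP^2 \times \PP^2, \Oo(2,2))$ at the particular secant numbers forced by the Horace bookkeeping.
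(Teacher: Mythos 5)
Your reduction to \Cref{a4.1} with $Y=\PP^2\times\PP^2$ and $\Ll=\Oo_Y(2,2)$ fails at exactly the point you flag as ``the only genuine obstacle'': the pair $(\PP^2\times\PP^2,\Oo(2,2))$ \emph{is} $7$-secant defective, so the hypothesis of \Cref{a4.1} (non-defectivity at $\lfloor\alpha/(r+1)\rfloor=\lfloor 36/5\rfloor=7$) is violated. This is not a borderline computation but part of the classical defect of this pair (the ``$3\times 3$ matrices embedded with $\Oo(2,2)$'' singled out in the introduction and in \Cref{prod:P2}): writing $C_F$ for the $9\times 9$ catalecticant of a $(2,2)$-form $F$, every point of $\sigma_7$ has $\operatorname{rank} C_F\le 7$, while the generic point of $\sigma_8$ has rank exactly $8$; since $\sigma_8\subseteq\{\det C_F=0\}$ already forces $\dim\sigma_8\le 34$, the equality $\dim\sigma_7=34$ would give $\sigma_7=\sigma_8$ and a rank contradiction, so $\dim\sigma_7\le 33<34$. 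Hence $h^0\cdot h^1\ne 0$ for $7$ general double points, and the Macaulay2 check you propose would come back negative.

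Your fallback does not repair this, because \emph{standard} Horace with $H=\{p\}\times\PP^2\times\PP^2$ cannot be balanced here: $h^0(H,\Oo_H(2,2))=36$ is not divisible by $5=\dim H+1$, so placing $k$ double points on $H$ either leaves the trace subabundant ($k\le 6$, whence the residual scheme has degree $36(t+1)-5k>36t=h^0(X,\Ll[t-1])$ and its $h^1$ cannot vanish) or forces $k\ge 7$, where the trace runs into the defective $\sigma_7$ and $\sigma_8$ of $(\PP^2\times\PP^2,\Oo(2,2))$. This is precisely why the paper's proof of \Cref{122} uses the Differential Horace Lemma (\Cref{Differential Horace lemma}) rather than the standard one: it specializes $6$ double points onto $H$ and pairs them with $6$ further ``differential'' points, so that the trace $(2\overline{S},H)\cup\widetilde{S}$ has degree exactly $5\cdot 6+6=36$ and only the non-defectivity of the $6$-th secant variety of $(\PP^2\times\PP^2,\Oo(2,2))$ (from \cite{CGG08}) is needed; the residual step then lands in $\Ll[t-2]$, giving an induction of step two with base cases $t=2,3$ verified by computer. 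As written, your proposal therefore has a genuine gap both in the main route and in the fallback.
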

\begin{proof}
    The pairs $(\PP^1\times \PP^2\times \PP^2, \mathcal{O}_{\PP^1\times \PP^2\times \PP^2}(t, 2,2 ))$ with $t=2,3$ are not secant defective by a computer check (cf. \cite{M2}). We proceed by induction. In what follows we assume that $t\geq 4$ and the pair $(\PP^1\times \PP^2\times \PP^2, \mathcal{O}_{\PP^1\times \PP^2\times \PP^2}(t-2, 2,2 ))$ is not secant defective.

    To simplify notation, let $Y=\PP^2\times \PP^2$, $\mathcal{L} = \mathcal{O}_Y(2,2)$ and $X=\PP^1\times Y$.
    Let $z=h^0(X, \Ll[t]) / (\dim X + 1)= 6(t+1)$ and $Z$ be a general union of $z$ double points of $X$. We need to show that $h^1(X, \mathcal{I}_Z\otimes \Ll[t]) = 0$. Let $\overline{Z}$ and $\widetilde{Z}$ be disjoint subsets of the support of $Z$ with $\# \overline{Z} = \#\widetilde{Z} = 6$. Decompose $Z$ into the disjoint union $Z = Z'\cup (2\overline{Z}, X)\cup (2\widetilde{Z}, X)$. Let $o$ be a point of $\PP^1$ and $H$ be the divisor $\{o\}\times Y  \subseteq \PP^1\times \PP^2\times \PP^2$. Let $\overline{S}$ and $\widetilde{S}$ be general disjoint subsets of $H$ with $\#\overline{S} = \#\widetilde{S}=6$. By semicontinuity and Horace Differential Lemma for double points (\Cref{Differential Horace lemma}) applied with $W=Z'\cup (2\overline{S}, X)$ it is now sufficient to show that
    \[
    h^1(X, \Ii_{Z'\cup \overline{S}\cup (2\widetilde{S}, H)}\otimes \Ll[t-1]) = h^1(H, \Ii_{(2\overline{S}, H)\cup \widetilde{S}}\otimes \mathcal{L}[t]_{|H}
    ) = 0.
    \]
    We have $\deg  (2\overline{S}, H)\cup \widetilde{S} = 36 = h^0(Y, \mathcal{O}_Y(2,2))$. Since the $6$-th secant variety of $(Y, \mathcal{O}_Y(2,2))$ is not defective by \cite{CGG08} we conclude that $h^1(H, \Ii_{(2\overline{S}, H)\cup \widetilde{S}}\otimes \mathcal{L}[t]_{|H}) = 0$.

    In order to show that $h^1(X, \Ii_{Z'\cup \overline{S}\cup (2\widetilde{S}, H)}\otimes \Ll[t-1]) = 0$, it is sufficient by the standard Horace Lemma to show that $h^1(X, \Ii_{Z'}\otimes \Ll[t-2]) = h^1(H, \Ii_{\overline{S}\cup (2\widetilde{S}, H)}\otimes \mathcal{L}[t-1]_{|H})=0$. The first of these cohomology groups vanishes since by induction the pair $(X, \Ll[t-2])$ is not secant defective. Since $\# \widetilde{S} = \# \overline{S}$, the second one vanishes as already shown above.
\end{proof}

\begin{theorem}\label{thm:p1andp2}
If $s_1, \ldots, s_j, t_1, \ldots, t_k\geq 2$, then the pair 
\[
((\PP^1)^j\times (\PP^2)^k, \mathcal{O}_{(\PP^1)^j\times (\PP^2)^k}(s_1,\ldots, s_j, t_1, \ldots, t_k))
\]
is secant defective if and only if one of the following holds up to a permutation of $\{s_1, \ldots, s_j\}$
\begin{itemize}
    \item $j=0$, $k=1$ and $t_1\in \{2,4\}$;
    \item $j=0$, $k=2$ and $t_1=t_2 = 2$;
    \item $j=2$, $k=0$ and $s_1=2, s_2 = 2a$ for some $a\in \mathbb{Z}_{>0}$;
    \item $j=3$, $k=0$ and $s_1=2, s_2 = 2, s_3=2$;
    \item $j=1$, $k=1$ and $s_1=2a$, $t_1=2$ for some $a\in \mathbb{Z}_{>0}$.
    \item $j=2$, $k=1$ and $s_1=s_2=t_1=2$.
\end{itemize}
\end{theorem}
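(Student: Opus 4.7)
The plan is to argue by induction on $N := j+k$, with \Cref{P1orP2} as the main inductive tool. The defective direction is immediate: the exceptions listed are all classical or established earlier -- $(j,k)=(0,1)$ with $t_1\in\{2,4\}$ is Alexander-Hirschowitz \cite{ah}; $(j,k)=(0,2)$ with $t_1=t_2=2$ is from \cite{cgg}; the pure $\mathbb P^1$ cases come from Laface-Postinghel \cite{lp}; $(j,k)=(1,1)$ with $(s_1,t_1)=(2a,2)$ is the classical unbalanced two-factor Segre-Veronese \cite{cgg}; and $(j,k)=(2,1)$ with tri-degree $(2,2,2)$ is contained in \Cref{112}.

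For the non-defective direction I would first settle the base cases with $N\leq 3$: the pure cases ($j=0$ or $k=0$) by \Cref{prod:P2} and Laface-Postinghel; the mixed case $(j,k)=(1,1)$ by Galuppi-Oneto \cite{go} when $s_1,t_1\geq 3$, together with BaurDraisma \cite{BaurDraisma} (possibly combined with \Cref{a4.1}) and/or explicit Macaulay2 checks \cite{M2} for the remaining non-defective subcases in which one exponent equals $2$; the case $(j,k)=(2,1)$ by \Cref{112}; and $(j,k)=(1,2)$ from \Cref{122} for tri-degree $(t,2,2)$, with the remaining tri-degrees handled by peeling off the $\mathbb P^1$ factor and applying \Cref{a4.1} on top of the non-defective base provided by \Cref{twofactors_nd} or \cite{go}.

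For the inductive step ($N\geq 4$), given a non-defective configuration $X=(\mathbb P^1)^j\times(\mathbb P^2)^k$ with line bundle $\mathcal L$, I would write $X = Y\times \mathbb P^n$ by removing a single factor of $\mathbb P^n$ ($n\in\{1,2\}$), choosing the factor so that $(Y,\mathcal L_Y)$ is non-defective by the inductive hypothesis. Because every exponent is at least $2$, the cohomology $h^0(Y,\mathcal L_Y)$ is bounded below by $3^{j'}\cdot 6^{k'}$, which for $N$ large comfortably exceeds the $\alpha$-thresholds of \Cref{a4}; hence \Cref{P1orP2} applies and both delivers the non-defectiveness of $(X,\mathcal L)$ and preserves the hypotheses needed for the next induction step.

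The main obstacle is the existence of \emph{corner configurations} in which every available peeling yields a base pair belonging to the defective list. The prototype is $(j,k)=(3,1)$ with $\mathcal L=\mathcal O(2,2,2,2)$: peeling the $\mathbb P^2$ leaves the defective $((\mathbb P^1)^3,\mathcal O(2,2,2))$, whereas peeling any $\mathbb P^1$ leaves the defective $((\mathbb P^1)^2\times \mathbb P^2,\mathcal O(2,2,2))$. For each such finite list of corners I would run a direct simultaneous Horace-Differential degeneration in the spirit of the proof of \Cref{a5.0}, using \Cref{rmk:simultaneous_Horace} with a divisor $H$ and a divisor $E$ of $H$ inside it to absorb both defective contributions at once, or alternatively verify the regularity directly with Macaulay2 \cite{M2}. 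Once these finitely many corner cases are disposed of, the induction closes uniformly for every remaining $(j,k)$ with $N\geq 4$.
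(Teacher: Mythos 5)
Your overall architecture (defective direction from the classical references, non-defective direction by peeling off one factor at a time) matches the paper's, and your identification of the all-degree-$2$ corner $(\PP^1)^3\times\PP^2$ with $\Oo(2,2,2,2)$ --- which the paper indeed settles by a Macaulay2 check --- is correct. However, there is a genuine gap in your inductive step. You propose \Cref{P1orP2} as the engine, but \Cref{P1orP2} requires the peeled base $(Y,\Ll_Y)$ to satisfy the hypotheses of \Cref{a5}, i.e.\ the large $\alpha$-thresholds of \Cref{a4} ($\alpha\ge 60$ for $r=2,3$, $\alpha\ge 98$ for $r=4$, etc.). Your exceptional set consists only of ``corner configurations'' where \emph{every} peeling lands on a \emph{defective} base; but there is a second, larger class of configurations where every peeling lands on a base that is non-defective yet still below the threshold, and these are not covered by your argument. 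Concretely, for $(\PP^1)^3\times\PP^2$ with $\Oo(2,2,3,2)$: peeling the $\PP^2$ gives $((\PP^1)^3,\Oo(2,2,3))$ with $r=3$ and $\alpha=36<60$; peeling the degree-$3$ copy of $\PP^1$ gives the defective $((\PP^1)^2\times\PP^2,\Oo(2,2,2))$; peeling a degree-$2$ copy of $\PP^1$ gives $((\PP^1)^2\times\PP^2,\Oo(2,3,2))$ with $r=4$ and $\alpha=72<98$. So no peeling satisfies the hypotheses of \Cref{P1orP2}, yet this is not one of your corners. The same happens for $\Oo(2,2,2,3)$, where the only non-defective peelings give $\alpha=90<98$ or $\alpha=36<60$. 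Your asymptotic estimate $3^{j'}6^{k'}$ versus a cubic in $j'+2k'$ is correct for large $N$, but the finitely many failures are not confined to the defective-base corners, and you have not enumerated or handled them.

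The paper circumvents exactly this problem by using \Cref{a4.1} (the $Y\times\PP^1$ result of \cite{b}), whose only numerical hypothesis is $\alpha>(r+1)^2$ --- always satisfied here once the base is non-defective --- as the workhorse for adding $\PP^1$ factors, and reserving the heavy \Cref{a5}/\Cref{P1orP2} machinery for the products of $\PP^2$'s (where \Cref{prod:P2} verifies the thresholds) and for \Cref{112} and \Cref{122}. You do invoke \Cref{a4.1} in your base cases, so the fix is available to you: replace \Cref{P1orP2} by \Cref{a4.1} whenever the factor being added is a $\PP^1$, and only use \Cref{P1orP2} when adding a $\PP^2$ on top of a base whose $\alpha$ genuinely clears the \Cref{a4} bound. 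As written, though, the inductive step does not close, because the residual finite list of cases you would have to check by hand or by Macaulay2 is not the list you describe.
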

\begin{proof}
By \Cref{prod:P2} and \cite{lp} it is enough to consider the cases where $j,k > 0$. Furthermore, the case of $j=k=1$ is known (cf. \cite{BaurDraisma}) so we assume that $j+k\geq 3$.

If $k\geq 3$ or $k=2$ and $(t_1,t_2)\neq (2,2)$, then the corresponding pair is not secant defective. This can be seen by \Cref{prod:P2} and a repeated application of  \Cref{a4.1} ({\cite[Theorem 2]{b}}).

If $k=2$ and $(t_1,t_2)=(2,2)$ then it follows from \Cref{122} that $(\PP^1\times \PP^2\times \PP^2, \mathcal{O}_{\PP^1\times\PP^2\times \PP^2}(t,2,2))$ is not secant defective for every $t\geq 2$.

The case of more factors of $\PP^1$ follows by a repeated application of \Cref{a4.1}.

Finally assume that $j\geq 2$ and $k=1$. If at least one $s_i$ or $t_1$ is larger than $2$ then we conclude that the pair is not secant defective by \Cref{112} and induction on $j$ using \Cref{a4.1}. 

We are left with the case that $j\geq 2$, $k=1$ and $s_1=\cdots = s_j=t_1 = 2$. If $j=2$, then as recalled in \Cref{112}, the pair is secant defective by \cite{cgg}. If $j=3$ the pair is not secant defective by a computer check (with Macaulay2 \cite{M2}). The general case follows from this and a repeated application of \Cref{a4.1}.
\end{proof}

\begin{theorem}\label{thm:mn2}
If $d,e \geq 3$ and $n\geq m\geq 1$, then the pair 
\[
(\PP^m\times\PP^n\times (\PP^2)^k, \Oo_{\PP^m\times\PP^n\times (\PP^2)^k} (d,e,t_1, \ldots, t_k))
\]
is not secant defective for all $k$ and $t_1\geq t_2\geq \cdots \geq 2$.
\end{theorem}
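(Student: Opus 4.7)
The plan is to proceed by a case analysis on $(m,n)$ and, in the main case, by induction on $k$ through repeated application of \Cref{a5}. When $(m,n)\in\{(1,1),(1,2),(2,2)\}$, the variety $\PP^m\times\PP^n\times(\PP^2)^k$ is already a product of copies of $\PP^1$ and $\PP^2$ with every embedding degree $\geq 2$. The hypothesis $d,e\geq 3$ excludes every sporadic defective configuration listed in \Cref{prod:P2} (when $(m,n)=(2,2)$) and in \Cref{thm:p1andp2} (when $(m,n)\in\{(1,1),(1,2)\}$), so the conclusion follows at once from those two theorems.

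When $n\geq 3$, I would argue by induction on $k$. The base case $k=0$ is the Galuppi--Oneto classification of two-factor Segre--Veronese varieties in bidegree $\geq(3,3)$ (\cite{go}). For the inductive step, I set $Y_{k-1}=\PP^m\times\PP^n\times(\PP^2)^{k-1}$ and $\mathcal{L}_{k-1}=\mathcal{O}(d,e,t_1,\dots,t_{k-1})$, with $r_{k-1}=m+n+2(k-1)$ and $\alpha_{k-1}=h^0(Y_{k-1},\mathcal{L}_{k-1})$, and apply \Cref{a5} to $Y_{k-1}\times\PP^2$. All structural assumptions of \Cref{a5} (integrality, very ampleness, vanishing of $h^1$, non-defectivity) are either immediate or supplied by induction, so the only nontrivial check is the $\alpha$-bound from \Cref{a4}. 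A routine tabulation shows that this bound fails in exactly one admissible configuration with $n\geq 3$, namely $(m,n,d,e,k)=(1,3,3,3,1)$, where $\alpha_0=4\cdot 20=80$ is short of the threshold $98$ for $r_0=4$; in every other admissible tuple one checks directly that $\binom{m+d}{m}\binom{n+e}{n}$ already meets the $r=m+n$ threshold, and subsequent steps preserve the bound because each extra $\PP^2$ factor multiplies $\alpha$ by $\binom{t_i+2}{2}\geq 6$ while the threshold ratio for $r\mapsto r+2$ stays well below $6$ (as one checks from the explicit values and asymptotics in \Cref{a4}).

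It remains to handle the isolated pair $(\PP^1\times\PP^3\times\PP^2,\mathcal{O}(3,3,t_1))$ for $t_1\geq 2$. When $t_1\geq 3$, the pair $(\PP^3\times\PP^2,\mathcal{O}(3,t_1))$ is non-defective by \cite{go} and satisfies $h^0=20\binom{t_1+2}{2}\geq 200>(5+1)^2$, so \Cref{a4.1} applied with $\PP^1$-degree $s=3$ yields non-defectivity of $(\PP^3\times\PP^2\times\PP^1,\mathcal{O}(3,t_1,3))$, which is exactly our variety after a permutation of the factors. For the remaining value $t_1=2$, I would verify $(\PP^1\times\PP^3\times\PP^2,\mathcal{O}(3,3,2))$ directly via Terracini's lemma on a computer algebra system \cite{M2}, in the same spirit as the computer-assisted checks performed inside \Cref{twofactors_nd}, \Cref{threefactors_nd}, and \Cref{122}. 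Once this exceptional pair is settled, the induction resumes for $k\geq 2$: now $\alpha_1\geq 480$ comfortably exceeds the $r_1=6$ threshold $176$, and \Cref{a5} carries the argument through for all remaining $k$.

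The principal difficulty is precisely this isolated numerical failure at $(m,n,d,e,t_1)=(1,3,3,3,2)$: it is the unique admissible configuration in which the clean induction through \Cref{a5} breaks down, so it forces either a computer-assisted Terracini check or an ad hoc Horace argument tailored to that single line bundle. Everything else in the proof is either an appeal to the prior classifications \Cref{prod:P2} and \Cref{thm:p1andp2} (Case A) or a routine numerical verification that feeds into \Cref{a5} (Case B).
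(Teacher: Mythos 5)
Your proposal is correct and follows essentially the same route as the paper: reduce to the $\alpha$-threshold of \Cref{a4}/\Cref{a5} via \cite{go} and induction on $k$, isolate the single failing configuration $(m,n,d,e)=(1,3,3,3)$, treat $t_1\geq 3$ by \cite{go} plus \Cref{a4.1}, and settle the remaining case by a computer-assisted Terracini check. The only (harmless) divergence is in that last case: the paper verifies the smaller pair $(\PP^3\times\PP^2,\Oo_{\PP^3\times\PP^2}(3,2))$ by computer and then lifts to $(\PP^1\times\PP^3\times\PP^2,\Oo(d,3,2))$ for all $d\geq 3$ at once via \Cref{a4.1}, whereas you check $(\PP^1\times\PP^3\times\PP^2,\Oo(3,3,2))$ directly, which is a heavier computation covering only $d=3$ (the cases $d\geq 4$ being absorbed by your threshold tabulation).
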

\begin{proof}\hfill
\begin{enumerate}
    \item  Assume first that $m+n\geq 8$. By \cite{go} and \Cref{P1orP2}, it is enough to show that the pair $(\PP^m\times\PP^n, \Oo_{\PP^m\times\PP^n} (3,3))$ satisfies the assumptions of \Cref{a5}. That is we need to check that
    \[
    \binom{n+3}{3}\binom{m+3}{3} \geq \frac{1}{81}(27(m+n)^3+144(m+n)^2+210(m+n)+79).
    \]
    Let $r=m+n$. Observe that $27r^3+144r^2+210r+79 \leq 27r^3+162r^2+324r+216 = (3r+6)^3$. Therefore, the right side is bounded from the above by $\frac{1}{3} (r+2)^3.$
    For $i=1,2,3$, we have $(n+i)((r-n)+i) \geq (1+i)((r-1)+i)$.    
    Therefore,
    \[
    \binom{n+3}{3}\binom{m+3}{3} \geq \frac{4\cdot 3 \cdot 2}{36} (r+2)(r+1)(r).
    \]
    Now it is enough to observe that $2(r+1)r \geq (r+2)^2$. This holds since $r\geq 8$. 
    
    \item Now assume that $m+n \leq 7$ and $m,n\geq 2$. By \cite{go} and \Cref{P1orP2}, it is enough to show that the pair $(\PP^m\times\PP^n, \Oo_{\PP^m\times\PP^n} (3,3))$ satisfies the assumptions of \Cref{a5}. There are finitely many cases to consider. We omit the calculations.
    
    \item If $m=1$ and $4\leq n \leq 6$, then we argue as in (2).

    \item If $m=1$, $n=3$ and $e\geq 4$, then we argue as in (2) with $(\PP^1\times\PP^3, \Oo_{\PP^1\times\PP^3} (3,4))$ instead of $(\PP^m\times\PP^n, \Oo_{\PP^m\times\PP^n} (3,3))$.
    \item Assume that $m=1$, $n=3$, $e=3$ and either $k=0$ or $t_1 \geq 3$. If $k=0$, then the pair is not secant defective by \cite{go}. If $k\geq 1$ and $t_1\geq 3$, then the pair $(\PP^1\times \PP^3\times \PP^2, \Oo_{\PP^1\times \PP^3\times \PP^2}(d,e,t_1))$ is not secant defective by \cite{go} and \Cref{a4.1} ({\cite[Theorem 2]{b}}). Since $h^0(\PP^1\times \PP^3\times \PP^2, \Oo_{\PP^1\times \PP^3\times \PP^2}(d,e,t_1)) \geq 4\cdot 20 \cdot 10 > 231$ we conclude by \Cref{P1orP2}.

    \item Assume that $m=1$, $n=3$, $e=3$, $k\geq 1$ and all $t_i$ are equal to $2$. By \Cref{P1orP2} it is enough to consider the case  $k=1$. The pair $(\PP^3\times \PP^2, \mathcal{O}_{\PP^3\times \PP^2}(3,2))$ is not secant defective by a computer check. Therefore, so is  $(\PP^1\times \PP^3\times \PP^2, \mathcal{O}_{\PP^1\times \PP^3\times \PP^2}(d,3,2))$ by \Cref{a4.1}.

    \item If $m=1$ and $n\in \{1,2\}$ the pair is not secant defective by \Cref{thm:p1andp2}.
\end{enumerate} \vspace{-0.4cm}
\end{proof}

\section{Proofs of Claims}\label{Section:claims}

For the sake of completeness, this section compiles the proofs of the Claims that primarily involve computational steps.

\hrule 

\begin{proof}[Proof of \Cref{claim1}]
Let $y_0$ be the maximal integer such that $y_0 \leq \lfloor \alpha/(r+1) \rfloor$ and $y_0 \equiv 3\alpha (\operatorname{mod} r+2)$. Let $x_0$ be the unique integer such that $(r+2)x_0 + y_0 = 3\alpha$. We show that
\begin{equation}\label{eq1}
    (r+2)(z-x_0-y_0) + y_0 \leq 2\alpha.
\end{equation}
By the definition of $y_0$ we have $y_0 \geq \lfloor \alpha/(r+1) \rfloor-(r+1)$. Therefore, 
\begin{equation}\label{eq2}
    (r+1)y_0 \geq \alpha - r - (r+1)^2.
\end{equation}
Multiplying \cref{eq1} by $(r+3)(r+1)$ and rearranging we need to show that
\begin{multline*}
 (r+1)(r+2)(r+3)z \leq 2\alpha(r+1)(r+3) + (r+3)(r+1)((r+2)x_0 + (r+1)y_0)\\
 = 2\alpha(r+1)(r+3) + (r+3)(r+1)(3\alpha + ry_0)
 =5\alpha (r+1)(r+3) + r(r+1)(r+3)y_0.
\end{multline*}
Using the definition of $z$ and \cref{eq2} it is sufficient to show that
\begin{equation*}
    (r+1)(r+2)(6\alpha + (r+2)) \leq 5\alpha (r+1)(r+3) + r(r+3)(\alpha - (r+(r+1)^2))
\end{equation*}
which is equivalent to
\begin{equation}\label{eqcon1}
5\alpha r + 3\alpha  \geq r^4 + 7r^3 + 15r^2+11r+4.
\end{equation}
Using the lower bounds on $\alpha$ from the statement we see that \cref{eqcon1} holds.

In all cases from the statement, we have  $\alpha \geq 3(r+1)^2+r = 3r^2+7r+3$. It follows from $y_0 \geq \lfloor \alpha/(r+1) \rfloor-(r+1)$ that  $y_0 \geq 2r+2$. Let $y_1$ be the smallest integer $y\geq 2r+2$ such that $y\equiv 3\alpha (\operatorname{mod} r+2)$ and $(r+2)(z-x-y)+ y \leq 2\alpha$ where $x$ is the unique integer for which $(r+2)x+y = 3\alpha$. Let $x_1$ be the unique integer for which $(r+2)x_1 + y_1 = 3\alpha$. 
By construction we have $(r+2)x_1 + y_1 = 3\alpha$, $ 2r+2 \leq y_1 \leq \lfloor \alpha/(r+1) \rfloor$ and $(r+2)(z-x_1-y_1)+y_1 \leq 2\alpha$. We need to show that $x_1+y_1\leq z$ and $z-x_1-y_1 \leq \alpha - (r+1)y_1$.

We start with the latter inequality. We multiply it  by $(r+2)(r+3)$ and rearrange to obtain
\begin{multline*}
(r+2)(r+3)z \leq (r+2)(r+3)\alpha + (r+3)(r+2)x_1 - r(r+2)(r+3)y_1\\
 = (r+2)(r+3)\alpha + 3(r+3)\alpha - (r+1)^2(r+3)y_1.
\end{multline*}
Using the definition of $z$ it is sufficient to show that
\[
(r+2)(6\alpha) + (r+2)^2 \leq (r+5)(r+3)\alpha - (r+1)^2(r+3)y_1
\]
or equivalently,
\begin{equation}\label{eq3}
    \alpha(r^2+2r+3) \geq (r+1)^2(r+3)y_1 + (r+2)^2.
\end{equation}
By definition of $y_1$ we have 
\begin{itemize}
\item either  $y_1 < (2r+2) + (r+2)$, 
\item or $(r+2)(z-(x_1+1)-(y_1-(r+2)) + (y_1 - (r+2)) - 1 \geq 2\alpha$. 
\end{itemize}

In the first case, to obtain \cref{eq3} it is enough to have
\begin{equation}\label{eqcon2}
    \alpha(r^2+2r+3) \geq 3(r+1)^3(r+3)+(r+2)^2 = 3r^4+18r^3+37r^2+34r+13.
\end{equation}
Using the lower bounds of $\alpha$ from the statement we verify that \cref{eqcon2} holds.

In the second case we have $(r+2)z - 5\alpha + (r+2)r-1 \geq ry_1$ and using the fact that $(r+3)z \leq 6\alpha + (r+2)$ we get
\begin{equation}\label{eq5}
    \alpha r - 3\alpha + r^3+6r^2+9r+1 \geq (r+3)r y_1.
\end{equation}
Therefore, to obtain \cref{eq3} it is sufficient if we have
\[
\alpha(r^3+2r^2+3r) \geq (r+1)^2 (\alpha r - 3\alpha + r^3+6r^2+9r+1) + r(r+2)^2
\]
or, equivalently
\begin{equation}\label{eqcon3}
\alpha(3r^2+8r+3) \geq r^5 + 8r^4+23r^3 + 29r^2 + 15r +1.
\end{equation}
It is enough to verify that this inequality holds for the lower bounds of $\alpha$.

Finally, we verify that $x_1+y_1 \leq z$. We have
\[
(r+3)(r+2)(x_1+y_1) = (r+3)(3\alpha + (r+1)y_1) \leq (r+3)(4\alpha)
\]
and 
\[
(r+2)(r+3)z \geq (r+2)6\alpha - (r+2)^2.
\]
Therefore, it is enough to show that
\[
(r+2)6\alpha - (r+2)^2 \geq (r+3)(4\alpha)
\]
or, equivalently, that
\begin{equation}\label{eqcon4}
2\alpha r \geq r^2+4r+4. 
\end{equation}
This holds by the assumed lower bounds on $\alpha$.
\end{proof}

\hrule

\begin{proof}[Proof of \Cref{claim2}]
We have 
$
(r+3)(r+2)(r+1)r(z-x_1-y_1) \geq (r+2)(r+1)r(6\alpha - (r+2)) - (r+3)(r+1)r(3\alpha + (r+1)y_1)
$
and
\[
(r+3)(r+2)(r+1)r \lceil \alpha/(r+1) \rceil \leq (r+3)(r+2)r (\alpha + r).
\]
Thus, it is sufficient to show that 
\[
(r+2)(r+1)r(6\alpha - (r+2)) - (r+3)(r+1)r(3\alpha + (r+1)y_1) \geq (r+3)(r+2)r (\alpha + r) 
\]
or equivalently, that
\begin{equation*}
 \alpha(2r^3+r^2-3r) -2r^4-10r^3-14r^2-4r \geq r(r+1)^2(r+3)y_1.   
\end{equation*}
Recall from the proof of \Cref{claim1} that $y_1\leq 3r+3$ or we have \cref{eq5}.

If  $y_1 \leq 3r+3$, it is sufficient to show that 
\begin{equation}\label{eq8}
    \alpha(2r^3+r^2-3r) -2r^4-10r^3-14r^2-4r \geq 3r(r+1)^3(r+3).
\end{equation}
This may be verified by using the lower bound for $\alpha$.
Assume that \cref{eq5} holds. We show that 
\[
\alpha(2r^3+r^2-3r)-2r^4-10r^3-14r^2-4r \geq (\alpha r - 3\alpha + r^3+6r^2+9r+1)(r+1)^2
\]
or equivalently that
\[
\alpha(r^3+2r^2+2r+3) \geq r^5+10r^4+32r^3+39r^2+15r+1.
\]
Using the lower bounds for $\alpha$ we see that this inequality holds.
\end{proof}

\hrule

\begin{proof}[Proof of \Cref{claim3}]
\[
(r+2)(r+3)(z'-z_1+x_1) = (r+2)(r+3)(z-z_1 - 4\alpha + (r+1)a) + (r+3)(3\alpha - y_1) 
\]
\[
\geq (r+2)(10\alpha) - (r+2)(6\alpha) - 4\alpha(r+2)(r+3) + (r+1)(r+3)(4\alpha - r-1) + (r+3)(3\alpha-y_1)
\]
\[
= \alpha(3r+5) - (r^3+5r^2+7r+3) - (r+3)y_1.
\]
It is enough to show that
\[
\alpha(3r+5)(r+1) \geq (r^3+5r^2+7r+3)(r+1) + (r+3)\alpha
\]
or equivalently
\[
(3r^2+7r+2)\alpha \geq r^4+6r^3+12r^2+10r+3
\]
which is true by the lower bounds on $\alpha$ and $r$.
\end{proof}

\hrule

\begin{proof}[Proof of \Cref{claim11}]
We have 
\[
(r+2)(r+3)(a+b) = (r+3)((t+1)\alpha + (r+1)b) \leq (r+3)(t+1)\alpha + (r+1)^2(r+3)
\]
and 
\[
(r+2)(r+3)z \geq (r+2)\left(\binom{t+2}{2}\alpha - (r+2)\right)
\]
so it is sufficient to show that
\[
(r+2)\left(\binom{t+2}{2}\alpha - (r+2)\right) \geq (r+3)(t+1)\alpha + (r+1)^2(r+3)
\]
or equivalently, that
\[
\alpha(t+1)(tr+2t-2) \geq 2(r+1)^2(r+3)+2(r+2)^2.
\]
It is enough to show the above inequality for $t=3$. Then it can be verified using the lower bounds on $\alpha$ from \Cref{a4}.
\end{proof}

\hrule 

\begin{proof}[Proof of \Cref{claim7}]
    In order to obtain the first part of the claim, we to apply \Cref{a3} \cref{seconda} with $(Y,\mathcal{L}) = (\PP^1\times \PP^1, \mathcal{O}_{\PP^1\times \PP^1}(2a,2))$,  $z=z'$ and $u=\widetilde z$. In particular, $r=2$ and $\alpha = 3(2a+1)$. 
    We first verify that the pair $(Y,\Ll)$ is not-$s$-secant defective for all $s\in \{z', \widetilde{z}, \lfloor 3(2a+1)/4 \rfloor \}$.
    By \cite{lp} only the $(2a+1)$-st secant variety of $(\PP^1\times\PP^1, \mathcal{O}_{\PP^1\times \PP^1}(2a,2))$ is defective. 
    We have $\lfloor 3(2a+1)/4\rfloor \leq 2a$, $\widetilde{z} \leq 2a$ and
    \begin{equation}\label{cl7_eq0}
        z' \geq 2a+2
    \end{equation}
    In order to show \cref{cl7_eq0}, it is sufficient to show that
    $4(18(2a+1) - 4) - 5(4\overline{z} + \widetilde{z}) - 15\widetilde{z} \geq 20(2a+2)$. Or equivalently, that $27(2a+1) \geq 40a + 56+15\widetilde{z}.$ This inequality is true since $\widetilde{z} \leq 3$ and $a\geq 6$.
             
    Therefore, in order to show the first part of the claim it is sufficient to show that the following are true
    \begin{align}
    4 (\lceil 18(2a+1)/5\rceil - \overline z - \widetilde z) & \leq 6(2a+1) - 6\label{cl7_eq1}\\
    4 (\lceil 18(2a+1)/5\rceil - \overline z - \widetilde z) + \widetilde{z} & \leq 6(2a+1)\label{cl7_eq2}\\
    \widetilde{z} & \leq 2a+1 \label{cl7_eq3}\\
    \lceil 18(2a+1)/5\rceil - \overline z - \widetilde z & \leq 3(2a+1) - 3\widetilde z\label{cl7_eq4}.
    \end{align}
    By definition $\widetilde z \leq 3$, hence \cref{cl7_eq3} holds. It also implies that \cref{cl7_eq2} is a consequence of \cref{cl7_eq1}. Furthermore, assuming that \cref{cl7_eq1} holds, in order to obtain \cref{cl7_eq4} it is sufficient to show that $6\widetilde z \leq 3(2a+1) + 3$. This holds since $a\geq 6$ and $\widetilde{z}\leq 3$. Thus we are left with proving \cref{cl7_eq1}. 
     We have $20(\lceil 18(2a+1)/5\rceil - \overline z - \widetilde z) \leq  4(18(2a+1) + 4) - 5(4\overline z + \widetilde z)-15\widetilde{z} = 27(2a+1) + 16-15\widetilde{z}$
    and our goal is to show that this is at most equal to $30(2a+1)-30$. Equivalently, we need to show that $46 \leq 3(2a+1) + 15\widetilde z.$ This holds since $\widetilde{z}\geq 1$ and $a\geq 6$.
\end{proof}

\bibliographystyle{alpha}
\newcommand{\etalchar}[1]{$^{#1}$}

\end{document}